\newtheorem{introthm}{Theorem}
\newtheorem{introcor}[introthm]{Corollary}
\newtheorem{introprop}[introthm]{Proposition}
\newtheorem{thm}{Theorem}[section]
\newtheorem{cor}[thm]{Corollary}
\newtheorem{lemma}[thm]{Lemma}
\newtheorem{prop}[thm]{Proposition}
\theoremstyle{definition}
\newtheorem{defi}[thm]{Definition}
\newtheorem{hyp}[thm]{Hypothesis}
\theoremstyle{remark}
\newtheorem{remark}[thm]{Remark}
\newlist{enums}{enumerate}{2}  
\setlist[enums,1]{label=\textup{(\alph*)}}
\setlist[enums,2]{label=\textup{(\roman*)}}
\renewcommand{\phi}{\varphi}
\renewcommand{\theta}{\vartheta}
\newcommand{\eps}{\varepsilon}
\renewcommand{\geq}{\geqslant}
\renewcommand{\leq}{\leqslant}
\newcommand{\nbd}{\nobreakdash-\hspace{0pt}}  
\newcommand{\defemph}[1]{\textbf{#1}} 
\newcommand{\ints}{\mathbb{Z}}
\newcommand{\rats}{\mathbb{Q}}
\newcommand{\compl}{\mathbb{C}}
\newcommand{\crp}[1]{\mathbb{#1}}     
\DeclarePairedDelimiterX{\brcls}[3]{\llbracket}{\rrbracket}{#1,#2,#3}
\DeclarePairedDelimiterX{\cohcl}[2]{\lbrack}{\rbrack}{#1,#2}  
\newcommand{\iso}{\cong}    
\newcommand{\nteq}{\trianglelefteq} 
\newcommand{\nt}{\triangleleft}
\newcommand{\cconj}[1]{\overline{#1}}  
\newcommand{\into}{\hookrightarrow}  
\newcommand{\tensor}{\otimes}
\newcommand{\oG}{\widehat{G}}
\newcommand{\oH}{\widehat{H}}
\newcommand{\macl}{\operatorname{\mathcal{M}}}
\newcommand{\cyccl}{\operatorname{\mathcal{C}}}
\DeclarePairedDelimiter{\abs}{\lvert}{\rvert}
\DeclarePairedDelimiter{\erz}{\langle}{\rangle}
\DeclarePairedDelimiterX{\ipch}[2]{[}{]}{#1,#2}  
\DeclareMathOperator{\Ker}{Ker}        
\DeclareMathOperator{\Aut}{Aut}
\DeclareMathOperator{\Z}{\mathbf{Z}}
\DeclareMathOperator{\C}{\mathbf{C}}         
\DeclareMathOperator{\Gal}{Gal}  
\DeclareMathOperator{\Irr}{Irr}
\DeclareMathOperator{\Lin}{Lin}
\DeclareMathOperator{\mat}{\mathbf{M}}      
\DeclareMathOperator{\enmo}{End} 
\DeclareMathOperator{\Tr}{Tr}
\DeclareMathOperator{\SC}{\mathcal{SC}}
\DeclareMathOperator{\BrCliff}{BrCliff}
\DeclareMathOperator{\Cores}{Cores}
\DeclareMathOperator{\Res}{Res}
\DeclareMathOperator{\Inf}{Inf}
\DeclareMathOperator{\Br}{Br}  
\begin{document}
\title{On Clifford theory with Galois action}
\author{Frieder Ladisch}
\thanks{Author partially supported by the DFG (Project: SCHU 1503/6-1)}
\address{Universität Rostock,
         Institut für Mathematik,
         Ulmenstr.~69, Haus~3,
         18057 Rostock,
         Germany}
\email{frieder.ladisch@uni-rostock.de}
\subjclass[2010]{20C15}
\keywords{Brauer-Clifford group, Clifford theory,
          Character theory of finite groups,
          Galois theory, Schur indices}
\begin{abstract}
  Let $\widehat{G}$ be a finite group, 
  $N $ a normal subgroup of $\widehat{G}$ 
  and $\theta\in \operatorname{Irr}N$. 
  Let $\mathbb{F}$ be a subfield of the complex numbers
  and assume that the Galois orbit of $\theta$ over $\mathbb{F}$
  is invariant in $\widehat{G}$.
  We show that there is another triple
  $(\widehat{G}_1,N_1,\theta_1)$ of the same form,
  such that the character theories of $\widehat{G}$ over $\theta$
  and of $\widehat{G}_1$ over $\theta_1$ are essentially ``the same''
  \emph{over the field $\mathbb{F}$}
  and such that the following holds:
  $\widehat{G}_1$ has a cyclic normal subgroup $C$ contained in $N_1$,
  such that
  $\theta_1=\lambda^{N_1}$ for some 
  linear character $\lambda$ of $C$,
  and such that $N_1/C$ is isomorphic to the (abelian) Galois group
  of the field extension $\mathbb{F}(\lambda)/\mathbb{F}(\theta_1)$.
  More precisely, having ``the same'' character theory means 
  that both triples yield the same
  element of the Brauer-Clifford group
  $\operatorname{BrCliff}(G,\mathbb{F}(\theta))$
  defined by A.~Turull.
\end{abstract}
\maketitle

\section{Introduction}
\subsection{Motivation}
Clifford theory is concerned with the characters
of a finite group lying over one fixed character of a normal subgroup.
So let $\oG$ be a finite group and
$N\nteq \oG$ a normal subgroup.
Let $\theta\in \Irr N$, 
where $\Irr N$ denotes the set of irreducible complex valued 
characters of the group $N$, as usual.
We write $\Irr(\oG\mid\theta)$ for the set of irreducible characters
of $\oG$ which lie above $\theta$ in the sense that their restriction
to $N$ has $\theta$ as constituent.

In studying $\Irr(\oG\mid \theta)$,
it is usually no loss of generality to assume that 
$\theta$ is invariant in $\oG$, using the well known 
\emph{Clifford correspondence}~\cite[Theorem~6.11]{isaCTdov}.
In this situation, $(\oG, N, \theta)$ is often called a character triple.
Then a well known theorem tells us that 
there is an ``isomorphic'' character triple $(\oG_1,N_1,\theta_1)$ 
such that $N_1 \subseteq \Z(\oG_1)$~\cite[Theorem~11.28]{isaCTdov}.
Questions about $\Irr(\oG\mid \theta)$ can often be reduced
to questions about $\Irr(\oG_1\mid \theta_1)$,
which are usually easier to handle.
This result is extremely useful, for example in reducing
questions about characters of finite groups to 
questions about characters of finite simple or quasisimple groups.

Some of these questions involve Galois automorphisms 
or even Schur indices~\cite{nav04,turull08b}
(over some fixed field $\crp{F}\subseteq \compl$, say).
Unfortunately, both of the above reductions are 
not well behaved with respect to Galois action on characters
and other rationality questions (like Schur indices of the involved
characters).
The first reduction (Clifford correspondence)
can be replaced by a reduction
to the case where $\theta$ is 
semi-invariant over the given field 
$\crp{F}$~\cite[Theorem~1]{rieschm96}.
(This means that the Galois orbit of $\theta$ is invariant
 in the group $\oG$.)

Now assuming that the character triple
$(\oG,N,\theta)$ is such that $\theta$ is semi-invariant in $\oG$
over some field $\crp{F}$,
usually we can not find a character triple
$(\oG_1,N_1,\theta_1)$ with $N_1\subseteq \Z(\oG_1)$,
and such that these character triples are 
``isomorphic over the field $\crp{F}$''.
We will give an exact definition of 
``isomorphic over $\crp{F}$'' below, using machinery developed
by Alexandre Turull~\cite{turull09,turull11}.
For the moment, it suffices to say that a correct definition
should imply that $\oG/N\iso \oG_1/N_1$ and that 
there is a bijection
between $\bigcup_{\alpha}\Irr(\oG\mid \theta^{\alpha})$ and 
$\bigcup_{\alpha}\Irr(\oG_1 \mid \theta_1^{\alpha})$
(unions over a Galois group)
commuting with field automorphisms over $\crp{F}$
and preserving Schur indices over $\crp{F}$.
Now if, for example, $\rats(\theta) = \rats(\sqrt{5})$ (say),
then it is clear that we can not 
have $\rats(\theta)=\rats(\theta_1)$ with $\theta_1$ linear.
The main result of this paper, as described in the abstract,
provides a possible substitute: 
At least we can find an ``isomorphic'' character triple 
$(\oG_1, N_1, \theta_1)$, where $N_1$ is cyclic by abelian
and $\theta_1$ is induced from a cyclic normal subgroup
of $\oG_1$.
This result is probably the best one can hope for,
if one wants to take into account Galois action and Schur indices.

\subsection{Notation}
To state the main result precisely, we need some notation.
Instead of character triples, we find it more convenient to use
Clifford pairs as introduced in~\cite{ladisch15b}.
We recall the definition.
Let $\oG$ and $G$ be finite groups
and let $\kappa\colon \oG \to G$ be
a surjective group homomorphism with kernel
$\Ker \kappa = N$. 
Thus 
\[
    \begin{tikzcd}
       1 \rar & N \rar & \oG \rar{\kappa} & G \rar & 1      
    \end{tikzcd}
\]
is an exact sequence, and $\oG/N\iso G$ via $\kappa$.
We say that
$(\theta,\kappa)$ is a \defemph{Clifford pair}
over $G$.
(Note that $\oG$, $G$ and $N$ are determined by $\kappa$ as
the domain, the image and the kernel of $\kappa$, respectively.)
We usually want to compare different Clifford pairs over the 
same group $G$, but with different groups $\oG$ and $N$.

Let $\crp{F}\subseteq \compl$ be a field.
(For simplicity of notation, we work with subfields of $\compl$, 
 the complex numbers,
 but of course one can replace $\compl$ by any 
 algebraically closed field of characteristic $0$ and assume that 
 all characters take values in this field.)
Then
$\theta\in \Irr N$ is called 
\defemph{semi-invariant} in $\oG$ over $\crp{F}$
(where $N\nteq \oG$),
if for every
$g\in \oG$, there is a field automorphism
$\alpha=\alpha_g\in \Gal(\crp{F}(\theta)/\crp{F})$
such that $\theta^{g\alpha}=\theta$.
In this situation,
the map $g\mapsto \alpha_g$ 
actually defines an action of $G$ on the field
$\crp{F}(\theta)$~\cite[Lemma~2.1]{i81b}.

To handle Clifford theory over small fields, 
Turull~\cite{turull09,turull11}
has introduced the \defemph{Brauer-Clifford group}.
For the moment, it is enough to know that the 
Brauer-Clifford group is a certain set
$\BrCliff(G,\crp{E})$ for any group $G$ and any field~$\crp{E}$ 
on which $G$ acts.
Given a Clifford pair 
$(\theta,\kappa)$ and a field $\crp{F}$ such that
$\theta$ is semi-invariant over $\crp{F}$,
the group $G$ acts on $\crp{F}(\theta)$
and the Brauer-Clifford group
$\BrCliff(G,\crp{F}(\theta))$ is defined.
Turull~\cite[Definition~7.7]{turull09} shows how to associate a 
certain element
$\brcls{\theta}{\kappa}{\crp{F}}\in \BrCliff(G,\crp{F}(\theta))$
with $(\theta,\kappa)$ and $\crp{F}$.
Moreover, if $(\theta,\kappa)$
and $(\theta_1,\kappa_1)$ are two pairs over $G$ such that
$\theta$ and $\theta_1$ are semi-invariant over 
$\crp{F}$ and induce the same action of $G$ on
$\crp{F}(\theta)=\crp{F}(\theta_1)$,
and if 
$\brcls{\theta}{\kappa}{\crp{F}} =
   \brcls{\theta_1}{\kappa_1}{\crp{F}}$,
then the character theories of $\oG$ over $\theta$
and of $\oG_1$ over $\theta_1$ are 
essentially ``the same'', 
including rationality properties
over the field $\crp{F}$.
(See~\cite[Theorem~7.12]{turull09} for the exact statement.) 
This justifies it to view such Clifford pairs 
as ``isomorphic over $\crp{F}$''.

\subsection{Main result}
The following is the main result of this paper.
For simplicity, we state it for subfields of 
the complex numbers~$\compl$,
but in fact $\compl$ can stand for any algebraically
closed field of characteristic $0$,
if all characters are assumed to take values in that fixed field
$\compl$.
(As usual, $\Lin C$ denotes the set of \emph{linear} characters 
 of a group $C$.)
\begin{introthm}\label{main}
  Let\/ $\crp{F}\subseteq \compl$ be a field, let
  \[
    \begin{tikzcd}
       1 \rar & N \rar[hook] & \oG \rar{\kappa} & G \rar & 1    
    \end{tikzcd}
  \]
  be an exact sequence of finite groups
  and let $\theta\in \Irr N$ be semi-invariant 
  in $\oG$ over\/ $\crp{F}$.
  Then there is another exact sequence of finite groups
  \[
      \begin{tikzcd}
         1 \rar & N_1 \rar[hook] & \oG_1 \rar{\kappa_1} & G \rar & 1 
      \end{tikzcd}
  \]
  and $\theta_1\in \Irr N_1$,
  such that\/ $\crp{F}(\theta)=\crp{F}(\theta_1)$ as $G$-fields, 
  such that\/
  \[\brcls{\theta}{\kappa}{\crp{F}} =
    \brcls{\theta_1}{\kappa_1}{\crp{F}}
    \quad\text{in} \quad 
    \BrCliff(G, \crp{F}(\theta))
    ,
  \] 
  and such that the following hold:
  \begin{enums}
  \item \label{i:maincns}
        $\oG_1$ has a cyclic normal subgroup
        $C\nteq \oG_1$ with $C\leq N_1$,
  \item \label{i:mainind}
        there is a faithful $\lambda\in \Lin C$ 
        with $\theta_1=\lambda^{N_1}$,
  \item \label{i:mainsemiinv}
        $\lambda$ is semi-invariant in $\oG_1$ over\/ 
        $\crp{F}$
        and 
  \item \label{i:maingaliso}
        $N_1/C\iso \Gal(\crp{F}(\lambda)/\crp{F}(\theta))$.
  \end{enums}
\end{introthm}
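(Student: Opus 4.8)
The plan is to build $(\oG_1,N_1,\theta_1)$ more or less explicitly out of a cyclic group of roots of unity --- the rôle once played by a central subgroup in the classical reduction --- arranging \ref{i:maincns}--\ref{i:maingaliso} by construction and the equality of Brauer--Clifford classes through a single cohomological parameter, which in turn is pinned down by an equivariant Brauer--Witt argument.

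\textbf{Step 1: the shape of the construction.} Put $K=\crp{F}(\theta)$, a $G$-field via semi-invariance. Since $\theta$ has values in $\crp{F}(\zeta_{|N|})$, choose $n\in\nats$ with $K\subseteq L:=\crp{F}(\zeta_n)$ (we enlarge $n$ in Step~3). Let $C$ be cyclic of order $n$, identified with $\mu_n\subseteq\compl^{\times}$, and let $\lambda\in\Lin C$ be the inclusion $C\into\compl^{\times}$; so $\lambda$ is faithful and $\crp{F}(\lambda)=L$. Set $H=\Gal(L/K)$ and $\Gamma=\Gal(L/\crp{F})$ (abelian, $H\leq\Gamma\leq\Aut C=(\ints/n\ints)^{\times}$), and let $\Gamma'=G\times_{\Gal(K/\crp{F})}\Gamma$ be the fibre product over the semi-invariance map $G\to\Gal(K/\crp{F})=\Gamma/H$, so that $1\to H\to\Gamma'\to G\to 1$ and $\Gamma'\to\Gamma\leq\Aut C$. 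I claim that for \emph{any} group extension $1\to C\to\oG_1\to\Gamma'\to 1$ inducing this action of $\Gamma'$ on $C$ (note $\oG_1$ is finite), putting $N_1=$ (preimage of $H\leq\Gamma'$) and $\theta_1=\lambda^{N_1}$ already yields \ref{i:maincns}--\ref{i:maingaliso} together with $\crp{F}(\theta)=\crp{F}(\theta_1)$ as $G$-fields and $\oG_1/N_1\iso G$: indeed $C\nteq\oG_1$ is cyclic and inside $N_1$; the inertia group $I_{N_1}(\lambda)$ equals $C$ because no nontrivial element of $H\leq(\ints/n\ints)^{\times}$ fixes the faithful $\lambda$, whence $\theta_1\in\Irr N_1$; the map $\oG_1\to\Gamma\leq(\ints/n\ints)^{\times}$ witnesses semi-invariance of $\lambda$ over $\crp{F}$; on $C$ one computes $\theta_1(c)=\Tr_{L/K}(\lambda(c))$, so $\crp{F}(\theta_1)=K$ by surjectivity of $\Tr_{L/K}$ and the fact that the powers of $\zeta_n$ span $L$ over $\crp{F}$, while $N_1/C\iso H=\Gal(\crp{F}(\lambda)/\crp{F}(\theta))$; finally, by abelianness of $\Gamma$ and the definition of $\Gamma'$ the $G$-field structure induced on $K$ by $(\theta_1,\kappa_1)$ is the original one. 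Thus everything reduces to choosing the extension class $\omega\in H^{2}(\Gamma',C)$.

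\textbf{Step 2: the two classes.} By Turull's construction $\brcls{\theta}{\kappa}{\crp{F}}$ is represented by the $G$-algebra $\mathcal{A}=e_{\theta}KN$ (the $\theta$-homogeneous component of $KN$, central simple over $K$) with the conjugation action of $\oG$, and the class depends only on $\mathcal{A}$ up to $G$-Morita equivalence. For our triple, $\brcls{\theta_1}{\kappa_1}{\crp{F}}$ is represented by $\mathcal{B}_\omega=e_{\theta_1}KN_1$ with the conjugation action of $\oG_1$. Since $e_\lambda LC\iso L$ and the $N_1$-orbit of $\lambda$ is free of size $|H|$, one identifies $\mathcal{B}_\omega$ as a crossed product $(L/K,H,c_\omega)$ with $c_\omega=\lambda_*\Res^{\Gamma'}_H\omega\in H^{2}(H,L^{\times})=\Br(L/K)$, its $G$-algebra structure being governed by the ``outer'' part of the extension $\oG_1$. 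Hence $\omega\mapsto\brcls{\theta_1}{\kappa_1}{\crp{F}}$ defines a map $\operatorname{cyc}\colon H^{2}(\Gamma',C)\to\BrCliff(G,K)$ whose composite with the forgetful map to $\Br(K)$ is $\omega\mapsto\lambda_*\Res^{\Gamma'}_H\omega$.

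\textbf{Step 3: hitting the target (the main obstacle).} It remains to see that $\brcls{\theta}{\kappa}{\crp{F}}$ lies in the image of $\operatorname{cyc}$ once $n$ is enlarged suitably. The constraints on $n$ --- that $K\subseteq\crp{F}(\zeta_n)$, that the Schur index of $\theta$ over $K$ divides $n$, and that $\crp{F}(\zeta_n)$ splits the division algebra underlying $\mathcal{A}$ --- are finitely many divisibility/embedding conditions and are simultaneously satisfiable, the splitting condition being the classical Brauer--Witt theorem (the underlying algebra lies in the Schur subgroup $S(K)$). For the underlying Brauer class the Kummer sequence $1\to\mu_n\to L^{\times}\xrightarrow{n}L^{\times}$ makes $\lambda_*\colon H^{2}(H,\mu_n)\to{}_{n}H^{2}(H,L^{\times})$ onto, reducing the plain statement to surjectivity properties of $\Res^{\Gamma'}_H$. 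The genuine difficulty --- and the step I expect to be the crux --- is the \emph{equivariant} refinement: showing that $\mathcal{A}$ is, up to $G$-Morita equivalence, isomorphic \emph{as a $G$-algebra} (not merely Brauer-equivalent over $K$) to a cyclotomic crossed product of the form produced by $\operatorname{cyc}(\omega)$, i.e.\ that the ``outer'' $G$-action and the Schur-index datum can be realised together by one group extension $\oG_1$. I expect to get this from an equivariant Brauer--Witt theorem in the spirit of Turull's work \cite{turull09,turull11}, phrased through the Clifford-pair formalism of \cite{ladisch15b}; granting it, one picks $\omega$ with $\operatorname{cyc}(\omega)=\brcls{\theta}{\kappa}{\crp{F}}$, takes $\oG_1$ to be the corresponding extension of $\Gamma'$ by $C$, and reads off $N_1$ and $\theta_1=\lambda^{N_1}$ as in Step~1; conditions \ref{i:maincns}--\ref{i:maingaliso} are then exactly what Step~1 records, with \ref{i:maingaliso} holding as a genuine isomorphism because $N_1/C$ was defined to be $H=\Gal(L/K)$ on the nose.
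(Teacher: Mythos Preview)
Your Steps~1 and~2 are a correct and useful reformulation: you have set up a family of candidate triples $(\oG_1,N_1,\theta_1)$ parametrised by $H^2(\Gamma',C)$ and verified that every member of this family satisfies \ref{i:maincns}--\ref{i:maingaliso} and gives the right $G$-field. But Step~3 is not a proof --- it is a restatement of the theorem. The ``equivariant Brauer--Witt theorem'' you invoke, namely that every element of $\SC(G,\crp{E})$ is $G$-Morita equivalent to a cyclotomic crossed product with compatible $G$-action, is precisely the content of Theorem~\ref{main}; it is not in Turull's papers or in \cite{ladisch15b}. Those references define the Brauer--Clifford group and the Schur--Clifford subgroup, but the equality $\SC(G,\crp{E})=\SC_{\cyccl}(G,\crp{E})$ is the new result being proved here. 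So at the moment your argument reduces the theorem to itself.

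The paper's approach is structurally different from your ``hit the target cohomologically'' strategy. Instead of parametrising candidate triples and showing surjectivity of your map $\operatorname{cyc}$, the paper first proves that $\SC_{\cyccl}(G,\crp{E})$ is a \emph{subgroup} of $\BrCliff(G,\crp{E})$ (Corollary~\ref{c:sccgroup}). This group structure is then exploited repeatedly: one reduces to the $p$-part of $\brcls{\theta}{\kappa}{\crp{F}}$ and to $G$ a $p$-group via restriction/corestriction to a Sylow subgroup (Section~\ref{sec:red_pgrp}), and to a larger ground field of $p$-power codegree in a splitting field by a similar corestriction argument (Corollary~\ref{c:reducbigfield}). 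Only then does a Brauer--Witt-type step occur: the Brauer--Berman--Witt induction theorem is used to replace $\oG$ by an $\crp{L}$-$p$-elementary subgroup $X$ with $XN=\oG$ (Theorem~\ref{t:elemsubext}, Corollary~\ref{c:elemsubext}), and for such groups a direct Yamada-style descent produces the linear $\lambda$ (Theorem~\ref{t:reduclin}). In short, the paper earns the equivariant Brauer--Witt statement through these reductions rather than citing it; your outline would become a genuine proof only once you supply an independent argument for the surjectivity claim in Step~3.
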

As mentioned before, 
Turull's result~\cite[Theorem~7.12]{turull09} yields that
in the situation of Theorem~\ref{main},
there are correspondences of characters with 
good compatibility properties. 
We mention a few properties here, and refer the reader to 
Turull's paper for a more complete list:
\begin{introcor}
  In the situation of Theorem~\ref{main}, 
  for each subgroup $H\leq G$ there is a bijection
  between 
  \[ \ints[\Irr(\kappa^{-1}(H) \mid \theta)]
     \quad \text{and} \quad
     \ints[ \Irr( \kappa_1^{-1}(H) \mid \theta_1) ].
  \]
  The bijections can be chosen such that their union commutes
  with restriction and induction of characters,
  with field automorphisms over the field\/ $\crp{F}$,
  and with multiplications of characters of $H$,
  and such that it preserves the inner product of class functions
  and fields of values and Schur indices
  (even elements in the Brauer group) over $\crp{F}$.
\end{introcor}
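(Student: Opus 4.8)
The plan is to deduce the Corollary directly from Theorem~\ref{main} and Turull's theorem~\cite[Theorem~7.12]{turull09}; essentially no new argument is required beyond bookkeeping. First I would invoke Theorem~\ref{main} to obtain the exact sequence $1 \to N_1 \to \oG_1 \xrightarrow{\kappa_1} G \to 1$ together with $\theta_1\in\Irr N_1$ satisfying $\crp{F}(\theta)=\crp{F}(\theta_1)$ as $G$\nbd fields and
\[
  \brcls{\theta}{\kappa}{\crp{F}} = \brcls{\theta_1}{\kappa_1}{\crp{F}}
  \quad\text{in}\quad \BrCliff(G,\crp{F}(\theta)).
\]
For the Corollary the extra properties (a)--(d) of Theorem~\ref{main} are irrelevant; all that is used is that we have two Clifford pairs over the same group $G$, both semi-invariant over $\crp{F}$, inducing the same $G$\nbd action on $\crp{F}(\theta)=\crp{F}(\theta_1)$ (this is exactly the content of ``as $G$\nbd fields''), and with equal images in $\BrCliff(G,\crp{F}(\theta))$.

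Next I would apply Turull's result. The statement of~\cite[Theorem~7.12]{turull09} is precisely that, under these hypotheses, for every subgroup $H\leq G$ there is a $\ints$\nbd linear isomorphism
\[
  \ints[\Irr(\kappa^{-1}(H)\mid\theta)] \longrightarrow \ints[\Irr(\kappa_1^{-1}(H)\mid\theta_1)],
\]
and that these isomorphisms can be chosen naturally in $H$, so that their union commutes with restriction along inclusions $H'\leq H$ and (by Frobenius reciprocity and the isometry property) with induction; intertwines the action of $\Gal(\crp{F}(\theta)/\crp{F})$, and more generally of field automorphisms over $\crp{F}$; is compatible with the module structure over the character ring of $H$ given by tensoring with inflated characters, i.e.\ with multiplication by characters of $H$; is an isometry for the inner product of class functions; and preserves fields of values over $\crp{F}$ together with the classes in $\Br(\crp{F}(\chi))$ that Turull attaches to the individual irreducible constituents, in particular their Schur indices over $\crp{F}$. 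Reading off exactly these conclusions and translating them into the language of Clifford pairs used here yields every assertion of the Corollary.

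The only thing to verify is that the formalism matches: one must check that the Clifford-pair data $(\theta,\kappa)$ and $(\theta_1,\kappa_1)$ correspond to the character triples to which~\cite[Theorem~7.12]{turull09} is stated, that the group $\BrCliff(G,\crp{F}(\theta))$ appearing in Theorem~\ref{main} is the same one in Turull's theorem, and that each bulleted compatibility in the Corollary is literally among Turull's conclusions or an immediate formal consequence of naturality (for induction) and of the Brauer-group statement (for Schur indices). I expect this to be the main obstacle, but it is expository rather than mathematical: the translation between Clifford pairs~\cite{ladisch15b} and character triples is routine, and by construction the Brauer--Clifford element $\brcls{\theta}{\kappa}{\crp{F}}$ of~\cite[Definition~7.7]{turull09} lives in the group on which~\cite[Theorem~7.12]{turull09} operates, so no further argument is needed.
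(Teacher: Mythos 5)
Your proposal is correct and matches the paper exactly: the Corollary is obtained by combining the conclusion of Theorem~\ref{main} (equal Brauer--Clifford elements for two Clifford pairs inducing the same $G$\nbd action on $\crp{F}(\theta)=\crp{F}(\theta_1)$) with a direct citation of Turull's Theorem~7.12, which supplies the bijections and all the listed compatibilities. The paper offers no further argument beyond this appeal, so your ``translation is routine'' caveat is precisely where the paper also leaves matters.
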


We can say something more about the group $\oG_1$ in the main theorem.
\begin{introprop}\label{p:subgroupsmain}
  In the situation of Theorem~\ref{main} and 
  for $\widehat{U}_1 = (\oG_1)_{\theta_1}$ and $V= (\oG_1)_{\lambda}$
  (the inertia groups of $\theta_1$ and $\lambda$ in $\oG_1$),
  we have $\widehat{U}_1 = VN_1$ and $N_1 \cap V = C$
  (cf.~Figure~\ref{fig:mainthm}), and
  \[ \brcls{\theta_1}{(\kappa_1)_{|\widehat{U}_1}}{\crp{F}(\lambda)}
     =
     \brcls{\lambda}{(\kappa_1)_{|V}}{\crp{F}(\lambda)}.
  \]
  For every subgroup $X$ with
  $N_1\leq X \leq \widehat{U}_1$, induction yields a bijection
  \[ \Irr(X\cap V\mid \lambda)
     \ni \psi  \mapsto \psi^X
     \in \Irr( X \mid \theta)
  \]
  commuting with field automorphisms over
  $\crp{F}(\lambda)$ and preserving
  Schur indices over $\crp{F}(\lambda)$.
\end{introprop}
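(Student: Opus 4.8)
The plan is to treat the three assertions in turn, the substantial work being the equality of Brauer--Clifford elements. First, the group theory. Since $\theta_1\in\Irr N_1$ we have $N_1\leq\widehat{U}_1$; and if $v\in V$ then $v$ fixes $\lambda$ and normalises $N_1$, so $\theta_1^{v}=(\lambda^{N_1})^{v}=(\lambda^{v})^{N_1}=\theta_1$, giving $V\leq\widehat{U}_1$ and hence $VN_1\leq\widehat{U}_1$. Since $C\nteq\oG_1$ is abelian, $C\leq N_1\cap V$; Mackey's irreducibility criterion applied to the irreducible $\theta_1=\lambda^{N_1}$ gives $1=[\lambda^{N_1},\lambda^{N_1}]_{N_1}=\lvert(N_1)_\lambda:C\rvert$, so $(N_1)_\lambda=N_1\cap V=C$. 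Comparing degrees, $\theta_1(1)=\lvert N_1:C\rvert$ forces $\theta_1|_C=\sum_{n\in N_1/C}\lambda^{n}$ to be multiplicity-free; hence any $g\in\widehat{U}_1$ permutes the constituents of $\theta_1|_C$, so $\lambda^{g}=\lambda^{n}$ for some $n\in N_1$ and $g\in VN_1$, and $\widehat{U}_1=VN_1$. Putting $U:=\kappa_1(V)=\kappa_1(\widehat{U}_1)$, the restrictions $(\kappa_1)_{|\widehat{U}_1}$ and $(\kappa_1)_{|V}$ are surjections onto $U$ with kernels $N_1$ and $C$, so $V/C$ is a complement to $N_1/C$ in $\widehat{U}_1/C$.

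Next, the Brauer--Clifford equality, which is the crux. Over the field $\crp{F}(\lambda)$ both pairs are ``invariant'': $\theta_1$ is fixed by $\widehat{U}_1$ and $\lambda$ by $V$, and $\crp{F}(\lambda)(\theta_1)=\crp{F}(\lambda)=\crp{F}(\lambda)(\lambda)$, so $\brcls{\theta_1}{(\kappa_1)_{|\widehat{U}_1}}{\crp{F}(\lambda)}$ and $\brcls{\lambda}{(\kappa_1)_{|V}}{\crp{F}(\lambda)}$ both lie in $\BrCliff(U,\crp{F}(\lambda))$, with $U$ acting trivially on $\crp{F}(\lambda)$ in both cases. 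To see that they coincide I would use the behaviour of Turull's Brauer--Clifford element under the Clifford correspondence in the induced configuration $\widehat{U}_1=VN_1$, $V\cap N_1=C$, $\theta_1=\lambda^{N_1}$ --- either by invoking a lemma of this shape established earlier in the paper, or directly from the crossed-product description of the element. For the direct route one uses that over $\crp{F}(\lambda)$ the simple $\crp{F}(\lambda)N_1$-module affording $\theta_1$ is $\mathrm{Ind}_{C}^{N_1}$ of the one-dimensional module affording $\lambda$ (so in particular $m_{\crp{F}(\lambda)}(\theta_1)=1$), and that, since the inertia subgroup $V$ of $\lambda$ already maps onto $U$, the functor $\mathrm{Ind}_{V}^{\widehat{U}_1}$ carries the $U$-graded algebra attached to $\lambda$ to the one attached to $\theta_1$, compatibly with the $U$-grading and with the (trivial) $U$-action on $\crp{F}(\lambda)$. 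I expect this to be the main obstacle: it means unwinding Turull's construction and checking that no corestriction-type correction term intervenes --- the hypotheses $\widehat{U}_1=VN_1$ and $V\cap N_1=C$ being precisely what exclude one.

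Finally, the induction correspondence. Given the Brauer--Clifford equality, \cite[Theorem~7.12]{turull09}, applied over $\crp{F}(\lambda)$ to the two pairs over $U$, provides, for every subgroup $H\leq U$, bijections between $\Irr(X\mid\theta_1)$ and $\Irr(X\cap V\mid\lambda)$ that are compatible with $\Gal(\compl/\crp{F}(\lambda))$ and preserve Schur indices over $\crp{F}(\lambda)$; here $X:=(\kappa_1)_{|\widehat{U}_1}^{-1}(H)$ runs exactly over the subgroups with $N_1\leq X\leq\widehat{U}_1$, and $X\cap V=(\kappa_1)_{|V}^{-1}(H)$. The candidate correspondence is ordinary induction: since $C\nteq X$ with inertia group $X\cap V$ of $\lambda$ in $X$, the Clifford correspondence gives a bijection $\Irr(X\cap V\mid\lambda)\to\Irr(X\mid\lambda)$, $\psi\mapsto\psi^{X}$, and $\Irr(X\mid\lambda)=\Irr(X\mid\theta_1)$ because every $\chi\in\Irr(X\mid\lambda)$ lies over the unique character $\lambda^{N_1}=\theta_1$ of $N_1$ above $\lambda$. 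As the equality of Brauer--Clifford elements above is witnessed by the induction functor $\mathrm{Ind}_{V}^{\widehat{U}_1}$ over $\crp{F}(\lambda)$, whose induced character correspondences are exactly the maps $\psi\mapsto\psi^{X}$, these maps may be taken as the correspondences of \cite[Theorem~7.12]{turull09}; commutation with field automorphisms over $\crp{F}(\lambda)$ is in any event automatic for induction and well defined since $\Gal(\compl/\crp{F}(\lambda))$ fixes $\lambda$, while preservation of Schur indices over $\crp{F}(\lambda)$ is part of that theorem.
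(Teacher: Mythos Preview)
Your strategy is essentially correct and matches the paper's, though the paper packages things more tightly. The paper proves this proposition as a one-line application of Corollary~\ref{c:subextclifford} (applied to the exact sequence $1\to N_1\to\widehat{U}_1\to U\to 1$ with $A=C$, $\tau=\lambda$, and the ground field $\crp{F}(\lambda)$): over $\crp{F}(\lambda)$ the Galois group $\Gal(\crp{F}(\lambda)(\lambda)/\crp{F}(\lambda))$ is trivial, so the semi-invariance group $\oH$ there is exactly $V$, and the corollary delivers simultaneously $\widehat{U}_1=VN_1$ and the equality of Brauer--Clifford elements. That corollary in turn rests on Theorem~\ref{t:comparesubext} in the multiplicity-one case (Corollary~\ref{c:multonesubext}), which is a concrete module-theoretic comparison via the idempotent $i=\sum_\gamma(e_{\theta_1}e_\lambda)^\gamma$. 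Your hedge ``invoking a lemma of this shape established earlier in the paper'' is exactly right; your alternative ``direct route'' via induction functors and $U$-graded algebras is a reasonable heuristic but is not carried to a proof, and the paper's idempotent argument is what actually does the work.

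For the induction bijection, the paper does not go through Turull's Theorem~7.12 at all: it simply invokes Lemma~\ref{l:cliffcorfields} (again over $\crp{F}(\lambda)$, where it reduces to the ordinary Clifford correspondence), noting there that the correspondence preserves Schur indices. Your route---first obtain abstract bijections from Turull, then argue that they coincide with $\psi\mapsto\psi^X$ because ``the equality of Brauer--Clifford elements is witnessed by the induction functor''---is circuitous, and the last step is not really justified: Turull's theorem guarantees the existence of compatible bijections, not that every natural candidate realises them. The direct Clifford-correspondence argument you give (that $(X)_\lambda=X\cap V$ and $\Irr(X\mid\lambda)=\Irr(X\mid\theta_1)$) is already enough for the bijection and its Galois compatibility; the Schur-index claim then follows either from the Brauer--Clifford equality or, as the paper remarks before Lemma~\ref{l:cliffcorfields}, by an elementary argument.
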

\begin{figure}[ht]
  \begin{tikzpicture}[on grid=true, inner sep=2pt]
      \node (U) {$\widehat{U}_1$};
      \node (G) [above right = 1.5 of U] 
                {$\oG_1$};
      \node (N) [below left=2.3 of U,
                 label=185:{$\lambda^{N_1}=\theta_1$}] 
                {$N_1$};
      \node (V) [below right=1.7 of U] 
                {$V$};
      \node (C) [below right=1.7 of N,
                 label={-25:{$\lambda$}} ] 
                {$C$};  
      \draw (U)--(G) node[auto=left, midway] {$\Gamma$};
      \draw (C)--(N) node[auto=left, midway] {$\Delta$};
      \draw (V)--(U) 
            node[auto=right, midway] 
            {$\Delta=\Gal(\crp{F}(\lambda)/\crp{F}(\theta))$};
      \draw (N)--(U); 
      \draw (V)--(C); 
      \node [right=0.5 of G, anchor= north west] 
            {($\Gamma\leq \Gal(\crp{F}(\theta)/\crp{F})$)};
  \end{tikzpicture}
  \caption{Subgroups of $\oG_1$ in Theorem~\ref{main}}
  \label{fig:mainthm}
\end{figure}
Observe that since $\lambda$ is faithful, we actually have
$V=\C_{\oG_1}(C)$ and $C\subseteq \Z(V)$.
So over the bigger field $\crp{F}(\lambda)$ and in the smaller group
$\widehat{U}_1$,
we can replace the Clifford pair
$(\theta_1, (\kappa_1)_{|\widehat{U}_1})$
by the even simpler pair $(\lambda, (\kappa_1)_{|V})$.
This is, of course, just the classical result mentioned before,
which is usually proved using the theory of projective representations
and covering groups.

We will prove Proposition~\ref{p:subgroupsmain} at the end of 
Section~\ref{sec:subext} as a special case of other, auxiliary
results, which are also needed for the proof of Theorem~\ref{main}.
Everthing else in this paper is devoted to
the proof of the main result, Theorem~\ref{main},
which is proved through a series of reductions.

\subsection{Relation to earlier results}
A number of people have studied Clifford theory over small fields,
including Dade~\cite{dade74,dade81b,dade08}, Isaacs~\cite{i81b},
Schmid~\cite{schmid85,schmid88} and Riese~\cite{riese95},
cf.~\cite{rieschm96}.
While we use some ideas of these authors,
most important for our paper is the theory of the Brauer-Clifford group
as developed by Turull~\cite{turull09,turull09b,turull11},
which supersedes in some sense his earlier theory of 
Clifford classes~\cite{turull94}.
In particular, it is essential for our proof 
that the Brauer-Clifford group and certain subsets of it are indeed 
groups, a fact which, it seems to me, 
has not been important in the applications
of the Brauer-Clifford group~\cite{turull08c,turull13b} so far.

A paper of Dade~\cite{dade74} contains, between the lines, a
result similar to our main theorem (but in a more narrow situation):
Dade studies the situation where (in our notation)
$\theta$ is invariant in $\oG$ and has values in 
$\crp{F}$.
Then a cohomology class $[\theta]\in H^2(G,\crp{F}^*)$ is defined.
This class determines part of the character theory of $\oG$ over 
$\theta$, but not completely, since it does not take into account 
the Schur index of $\theta$ itself.
After proving some properties of this cohomology class, 
Dade shows that all cohomology classes with these properties occur,
by constructing examples. 
When examining this construction, one will find that the examples
have almost the same properties as the group $\oG_1$ in 
Theorem~\ref{main}.
One can deduce that all such cohomology classes come from 
character triples as in Theorem~\ref{main}.

Theorem~\ref{main} contains the classical
result that every simple direct summand of the group algebra
$\crp{E}N$ of a finite group $N$ over a field $\crp{E}$
of characteristic zero
is equivalent to a cyclotomic algebra~\cite{Yamada74}.
(This is the case $G=1$ of Theorem~\ref{main}.)
Our proof of Theorem~\ref{main} uses ideas from the proof
of this result, as presented by Yamada~\cite{Yamada74}.

\section{Review of the Brauer-Clifford group}
\label{sec:review}
Let us briefly recall the relevant definitions.
Details can be found in the papers of Turull~\cite{turull09,turull11},
see also~\cite{HermanMitra11}.
Let $G$ be a group.
A \defemph{$G$-ring} is a ring $Z$ (with~$1$) together with an action
of $G$ on $Z$ via (unital) ring automorphisms.
We use exponential notation $z\mapsto z^g$ to denote such an action.
Let $Z$ be a commutative $G$-ring. 
A \defemph{$G$\nbd algebra over $Z$} is a $G$ ring $A$ together
with a homomorphism of $G$\nbd rings
$\eps\colon Z\to \Z(A)$.
This means that $A$ is an algebra over $Z$ and that the algebra unit
$\eps\colon Z\to A$ has the property
$\eps(z^g)=\eps(z)^g$.
We usually suppress mention of the algebra unit $\eps$
and simply write $za$ for $\eps(z)a$.

We only need $G$\nbd algebras over fields
in this paper.
Let $Z$ be a field on which $G$ acts.
A $G$\nbd algebra over $Z$ is called \defemph{central simple},
if it is central simple as algebra over $Z$.
The \defemph{Brauer-Clifford group} $\BrCliff(G,Z)$
is the set of equivalence classes of central simple 
$G$-algebras over $Z$
under a certain equivalence relation.
To define this equivalence relation, we need 
the \defemph{skew group ring}  $ZG$ of $G$ over $Z$
(also called the \defemph{crossed product}),
which is the set of formal sums
$\sum_{g\in G} gc_g$, ($c_g\in Z$)
with multiplication defined by
\[ \Big(\sum_{g\in G} gc_g \Big)
   \Big(\sum_{h\in G} h d_h\Big) 
   = \sum_{g,h} gh c_g^h d_h.
\]
If $V$ is a right $ZG$\nbd module, then
$\enmo_{Z}V$ is a $G$\nbd algebra over $Z$,
called a \defemph{trivial $G$\nbd algebra}.
Two $G$\nbd algebras $S$ and $T$ over $Z$ are called 
\defemph{equivalent}, if there are $ZG$\nbd modules
$V$ and $W$ such that
\[ S \tensor_Z \enmo_Z V \iso T\tensor_Z \enmo_Z W
\]
as $G$\nbd algebras over $Z$.
The Brauer-Clifford group $\BrCliff(G,Z)$ 
is the set of equivalence classes
of central simple $G$\nbd algebras over $Z$, with multiplication
induced by tensoring over $Z$.

The Brauer-Clifford group is an abelian torsion 
group~\cites[Theorem~3.10]{turull09}[Theorem~5]{HermanMitra11}.

Next let $(\theta,\kappa\colon \oG\to G)$ be 
a Clifford pair and $\crp{F}\subseteq \compl$ a field.
Assume that $\theta$ is semi-invariant in $\oG$ over $\crp{F}$.
(The last assumption is not necessary in Turull's theory,
 but we only need this case, and the notation 
 can be simplified somewhat in this case.)
So for every
$g\in \oG$, there is a field automorphism
$\alpha_g\in \Gal(\crp{F}(\theta)/\crp{F})$
such that $\theta^{g\alpha_g}=\theta$.
The map $g\mapsto \alpha_g$ 
defines an action of $G$ on the field
$\crp{F}(\theta)$~\cite[Lemma~2.1]{i81b}
and thus the 
Brauer-Clifford group $\BrCliff(G, \crp{F}(\theta))$
is defined.
Turull showed~\cite[Definition~7.7]{turull09} 
how to associate an element
\[\brcls{\theta}{\kappa}{\crp{F}}\in \BrCliff(G,\crp{F}(\theta))
\]
to $(\theta,\kappa,\crp{F})$.
We recall the construction.
Let 
$e=e_{(\theta,\crp{F})}\in \Z(\crp{F}N)$
be the central primitive idempotent of $ \crp{F}N $ 
corresponding to $\theta$.
Note that $e$ is invariant in $\oG$ since $\theta$ is semi-invariant.
An $\crp{F}\oG$\nbd module $V$ is called
\defemph{$\theta$\nbd quasihomogeneous} if $Ve=V$.
Let $V$ be a nonzero quasihomogeneous $\crp{F}\oG$-module.
Then $S=\enmo_{\crp{F}N}V$ is a $G$\nbd algebra.
For $z\in\Z(\crp{F}Ne)$, the map $v\mapsto vz$
is in $S$, and this defines a canonical isomorphism
$\Z(\crp{F}Ne)\iso \Z(S)$ of $G$\nbd algebras.
The central character $\omega_{\theta}$ belonging
to $\theta$ defines an isomorphism
$\Z(\crp{F}N e)\iso \crp{F}(\theta)$, 
which commutes with the action of $G$
(namely, we have 
$\omega_{\theta}(z^g)= \omega_{\theta}(z)^{\alpha_g}$).
Thus we can view $S$ as a central simple $G$\nbd algebra over 
$\crp{F}(\theta)$.
The equivalence class of $S$
 does not depend on the choice 
of the quasihomogeneous module $V$~\cite[Theorem~7.6]{turull09}.
Following Turull, we write
$\brcls{\theta}{\kappa}{\crp{F}}$
for this equivalence class.
(Actually, Turull defines  $\brcls{\theta}{\kappa}{\crp{F}}$
as an element of $\BrCliff(G,Z)$, 
where $Z=\Z(\crp{F}Ne)$.)

Now suppose that $(\theta,\kappa)$ and 
$(\theta_1,\kappa_1)$ are two semi-invariant 
Clifford pairs over $G$ such that
$\crp{F}(\theta)=\crp{F}(\theta_1)$.
Assume that both Clifford pairs induce
the same action of $G$ on $\crp{F}(\theta)$, that is,
for each $g\in G$ there is $\alpha_g\in \Gal(\crp{F}(\theta)/\crp{F})$
such that $\theta^{g\alpha_g}=\theta$ 
and $\theta_1^{g\alpha_g}=\theta_1$
for all $g\in G$.
If 
\[ \brcls{\theta}{\kappa}{\crp{F}}
     = \brcls{\theta_1}{\kappa_1}{\crp{F}},
\] 
then the character theory of
$\oG$ over the Galois conjugates of $\theta$ 
and the character theory of
$\oG_1$ over the Galois conjugates of $\theta_1$ are essentially 
``the same''~\cite[Theorem~7.12]{turull09},
as we mentioned in the introduction.

\section{Subextensions}
\label{sec:subext}
Let
\[ \begin{tikzcd}
     1 \rar & K \rar & \oG \rar{\kappa} & G \rar & 1 
   \end{tikzcd}
\]
be an exact sequence and $\theta\in \Irr K$.
(Thus $(\theta,\kappa)$ is a Clifford pair.)
Suppose that $\oH$ is a supplement of $K$ in $\oG$,
so that $\oG = \oH K $.
Set $L=\oH\cap K$.
Then $\oH/L \iso \oG/K\iso G$.
Suppose that $\phi\in \Irr L$. 
We want to compare the elements
$\brcls{\theta}{\kappa}{\crp{F}}$
and 
$\brcls{\phi}{\kappa_{|\oH}}{\crp{F}}$.
We do this under additional assumptions,
which we collect here for convenient reference:
\begin{hyp}\label{h:bconf2}
Let
\[
  \begin{tikzcd}
    1 \rar & L \rar \dar[hook] & \oH \rar{\kappa_{|\oH}} \dar[hook]
                               & G \rar \dar[equals] & 1 \\
    1 \rar & K \rar & \oG \rar{\kappa} & G \rar & 1
  \end{tikzcd}
\]
be a commutative diagram of finite groups with exact rows,
let $\theta\in \Irr K $ and $\phi\in \Irr L$ be irreducible
characters and let
$\crp{F}$ be a field of characteristic zero
such that the following conditions hold:
\begin{enums}
\item $n= \ipch{\theta_L}{\phi}>0$.
\item $\crp{F}(\phi)= \crp{F}(\theta) =: \crp{E}$.
\item For every $h\in \oH$ there is
      $\gamma= \gamma_h\in  \Gal( \crp{E} /\crp{F})$ such that
      $\theta^{h\gamma}= \theta$ and
      $\phi^{h\gamma} = \phi$.
\end{enums}
To reduce visual clutter, we write
\[ e = e_{(\theta,\crp{F}) }
     = \sum_{ \gamma \in \Gal( \crp{E} / \crp{F} ) 
            } e_{\theta}^{\gamma}
   \quad \text{and}\quad
   f = e_{(\phi,\crp{F}) }
        = \sum_{ \gamma \in \Gal( \crp{E} / \crp{F} ) 
               } e_{\phi}^{\gamma}      
\]
for the corresponding central primitive idempotents
in $\crp{F}K$ and $\crp{F}L$, respectively.
\end{hyp}
\begin{lemma}[{\cite[Lemma~6.3]{ladisch10pre}}]
\[i :=\sum_{\gamma\in \Gal( \crp{E} / \crp{F})}
e_{\phi}^{\gamma}e_{\theta}^{\gamma}
\]
is a $\oH$\nbd stable nonzero idempotent in 
$\crp{F}Ke$, 
and we have
$ei = i = ie$ and
$fi = i = if$.
\end{lemma}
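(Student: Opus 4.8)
Here is how I would prove the lemma.

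The plan is to deduce every assertion from the orthogonality of the idempotents involved, checking in turn idempotency, the products with $e$ and with $f$, invariance under $\Gal(\crp{E}/\crp{F})$ and under $\oH$, and finally non-vanishing. The key preliminary observation is that, by Hypothesis~\ref{h:bconf2}(b), the field $\crp{E}=\crp{F}(\theta)=\crp{F}(\phi)$ already contains all character values of $\theta$ and of $\phi$. Hence $e_\theta$ is a central primitive idempotent of $\crp{E}K$ and $e_\phi$ is one of $\crp{E}L\subseteq\crp{E}K$; and since $\crp{E}/\crp{F}$ is Galois, $\Gal(\crp{E}/\crp{F})$ permutes the idempotents $e_\theta^\gamma$ freely (there are $[\crp{E}:\crp{F}]$ of them) with $\sum_\gamma e_\theta^\gamma=e$, and similarly the $e_\phi^\gamma$ with sum $f$. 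In particular the $e_\theta^\gamma$ are pairwise orthogonal central primitive idempotents of $\crp{E}K$, and the $e_\phi^\gamma$ pairwise orthogonal ones of $\crp{E}L$.

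Since $e_\theta^\gamma\in\Z(\crp{E}K)$ it commutes with $e_\phi^{\gamma'}\in\crp{E}L\subseteq\crp{E}K$, so
\[ (e_\phi^\gamma e_\theta^\gamma)(e_\phi^{\gamma'}e_\theta^{\gamma'})
   = (e_\phi^\gamma e_\phi^{\gamma'})(e_\theta^\gamma e_\theta^{\gamma'}), \]
which is $e_\phi^\gamma e_\theta^\gamma$ when $\gamma=\gamma'$ and $0$ otherwise, by orthogonality of the $e_\theta^\gamma$. Thus the summands of $i$ are pairwise orthogonal idempotents and $i^2=i$. As $e\in\Z(\crp{E}K)$ and $e_\theta^\gamma e=e_\theta^\gamma$, we get $ie=ei=i$; and since $fe_\phi^\gamma=e_\phi^\gamma f=e_\phi^\gamma$ in $\crp{E}L$ while $f\in\crp{E}L$ commutes with the central elements $e_\theta^\gamma$, we get $fi=if=i$. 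Applying $\delta\in\Gal(\crp{E}/\crp{F})$ coefficientwise to $\crp{E}K$ sends $e_\phi^\gamma e_\theta^\gamma$ to $e_\phi^{\gamma\delta}e_\theta^{\gamma\delta}$, hence only permutes the summands of $i$; therefore $i$ is $\Gal(\crp{E}/\crp{F})$-fixed, i.e.\ $i\in\crp{F}K$, and then $ie=i$ gives $i\in\crp{F}Ke$. For $\oH$-stability, note that $\oH$ normalises $K$ (as $K\nteq\oG$) and $L=\oH\cap K$, so conjugation by $h\in\oH$ is an automorphism of $\crp{F}K$, and this conjugation commutes with the coefficientwise $\Gal(\crp{E}/\crp{F})$-action on $\crp{E}K$. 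Conjugation by $h$ carries $e_\theta$ to the central primitive idempotent of $\crp{E}K$ corresponding to the $\oH$-conjugate of $\theta$, and $e_\phi$ to the corresponding one in $\crp{E}L$; by Hypothesis~\ref{h:bconf2}(c) there is a \emph{single} $\gamma_h\in\Gal(\crp{E}/\crp{F})$ with $\theta^{h\gamma_h}=\theta$ and $\phi^{h\gamma_h}=\phi$, so conjugation by $h$ acts on both $e_\theta$ and $e_\phi$ as the automorphism $\gamma_h^{-1}$. Hence $h^{-1}(e_\phi^\gamma e_\theta^\gamma)h=e_\phi^{\gamma_h^{-1}\gamma}e_\theta^{\gamma_h^{-1}\gamma}$, and summing over $\gamma$ and reindexing gives $h^{-1}ih=i$.

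It remains to see that $i\neq0$. Multiplying $i$ by $e_\theta$ and using orthogonality of the $e_\theta^\gamma$ leaves only the term $\gamma=1$, so $ie_\theta=e_\phi e_\theta$. Now view $e_\theta$ and $e_\phi$ inside $\compl K\supseteq\compl L$, where they are the primitive central idempotents corresponding to $\theta\in\Irr K$ and $\phi\in\Irr L$. Then $e_\theta$ projects onto the $\theta$-homogeneous component, a nonzero sum of copies of the simple $\compl K$-module with character $\theta$, and on that module $e_\phi$ acts as the projection onto the $\phi$-isotypic part of the restriction to $L$; this part is nonzero precisely because $n=\ipch{\theta_L}{\phi}>0$ by Hypothesis~\ref{h:bconf2}(a). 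Hence $e_\phi e_\theta\neq0$ in $\compl K$, so $e_\phi e_\theta\neq0$ in $\crp{E}K$ and $i\neq0$. The only steps carrying any content are the $\oH$-stability bookkeeping (where it is essential that one Galois automorphism $\gamma_h$ simultaneously undoes the $h$-conjugation on $\theta$ and on $\phi$, which is exactly Hypothesis~\ref{h:bconf2}(c)) and the reduction of non-vanishing to $n>0$; the rest is purely formal.
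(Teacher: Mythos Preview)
Your proof is correct. The paper does not actually give a proof of this lemma; it is simply quoted from the earlier paper \cite{ladisch10pre}, so there is nothing in the present paper to compare your argument against. That said, your argument is exactly the expected one: pairwise orthogonality of the $e_{\theta}^{\gamma}$ gives idempotency and the relations $ei=i=ie$, $fi=i=if$; Galois-invariance of the sum forces $i\in\crp{F}K$; Hypothesis~\ref{h:bconf2}(c) (a \emph{single} $\gamma_h$ works simultaneously for $\theta$ and $\phi$) is precisely what is needed for $\oH$-stability; and $i\,e_{\theta}=e_{\phi}e_{\theta}\neq 0$ because $\ipch{\theta_L}{\phi}>0$. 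One small remark: your reindexing in the $\oH$-stability step implicitly uses that the group-conjugation action and the coefficientwise Galois action on $\crp{E}K$ commute (which you do state), and that left-multiplying $\Gal(\crp{E}/\crp{F})$ by $\gamma_h^{-1}$ is a bijection; both are trivially true, so nothing is missing.
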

\begin{lemma}[{\cite[Lemma 6.4]{ladisch10pre}}]
\label{l:centerisos}
  \[ \Z(i\crp{F}Ki)  \iso  \Z(\crp{F}Ke)
                 \iso \crp{F}(\theta) = \crp{F}(\phi)
                 \iso \Z(\crp{F}Lf)
  \]
  as $G$\nbd rings.
\end{lemma}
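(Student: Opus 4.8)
The plan is to build the three isomorphisms in the chain one at a time and to check $G$-equivariance at each step. First I would note that all four rings carry a well-defined action of $G\iso\oG/K\iso\oH/L$ by conjugation with preimages: on $\Z(\crp{F}Ke)$ because $K$ acts trivially on $\Z(\crp{F}K)\supseteq\Z(\crp{F}Ke)$, on $\Z(\crp{F}Lf)$ because $L$ acts trivially on $\Z(\crp{F}L)$, and on $\Z(i\crp{F}Ki)$ because $i$ is $\oH$-stable (previous lemma) while $L$ fixes $i$ and centralizes $\Z(\crp{F}K)$. Since every $h\in\oH$ is also a $\kappa$-preimage in $\oG$ of $\kappa(h)$, the $\oH/L$-action and the $\oG/K$-action on $\Z(\crp{F}Ke)$ agree under $\kappa$, so there is genuinely one $G$-action in play on each ring.

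For the two outer isomorphisms I would use central characters. As $e$ is a primitive central idempotent of the semisimple algebra $\crp{F}K$, the ring $A:=\crp{F}Ke$ is simple (Artinian) with center a field, and $\omega_{\theta}$ restricts to a field isomorphism $\Z(A)\xrightarrow{\sim}\crp{F}(\theta)$; by the identity $\omega_{\theta}(z^{g})=\omega_{\theta}(z)^{\alpha_{g}}$ recalled in Section~\ref{sec:review} (with $\alpha_g$ the unique element of $\Gal(\crp{E}/\crp{F})$ satisfying $\theta^{g\alpha_g}=\theta$) this isomorphism is $G$-equivariant. Applying the same reasoning to $L\nteq\oH$ and to $\phi\in\Irr L$ (which is semi-invariant in $\oH$ over $\crp{F}$ by Hypothesis~\ref{h:bconf2}) yields a $G$-equivariant field isomorphism $\Z(\crp{F}Lf)\xrightarrow{\sim}\crp{F}(\phi)$ for the action $g\mapsto\beta_g$ determined by $\phi^{g\beta_g}=\phi$. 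Condition~(c) of Hypothesis~\ref{h:bconf2} is exactly what forces $\alpha_g=\beta_g$: for a preimage $h\in\oH$ of $g$ a single $\gamma_h$ satisfies both $\theta^{h\gamma_h}=\theta$ and $\phi^{h\gamma_h}=\phi$, while $\alpha_g$ and $\beta_g$ are the unique automorphisms doing the respective jobs because $\crp{F}(\theta)=\crp{E}=\crp{F}(\phi)$. Hence $\crp{F}(\theta)=\crp{F}(\phi)$ as $G$-fields, and combining we obtain $\Z(\crp{F}Ke)\iso\crp{F}(\theta)=\crp{F}(\phi)\iso\Z(\crp{F}Lf)$ as $G$-rings.

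It remains to identify $\Z(i\crp{F}Ki)$ with $\Z(\crp{F}Ke)$, and this is the step carrying the real content. By the previous lemma $i$ is a nonzero idempotent of $A=\crp{F}Ke$ with $ei=i=ie$, so $i\crp{F}Ki=iAi$ is the corner ring of $A$ at $i$. Since $A$ is simple Artinian and $i\neq0$ we have $AiA=A$, so $i$ is a full idempotent; hence $iAi$ is Morita equivalent to $A$ and the natural map $\Phi\colon\Z(A)\to\Z(iAi)$, $\Phi(z)=zi=iz=izi$, is a ring isomorphism. (Concretely: writing $A\iso\mat_n(D)$ with $D$ a division ring and $\Z(D)=\Z(A)$, the idempotent $i$ has some rank $m$, $iAi\iso\mat_m(D)$, and under these identifications $\Phi$ becomes the canonical isomorphism $\Z(D)\to\Z(D)$; injectivity is immediate as $\Z(A)$ is a field, and surjectivity is the standard Morita argument via a relation $1=\sum_k a_k i b_k$.) Moreover $\Phi$ is $G$-equivariant: for $g\in G$ with preimage $h\in\oH$ one has $i^h=i$, so $\Phi(z^h)=z^h i=z^h i^h=(zi)^h=\Phi(z)^h$. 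Composing $\Phi^{-1}$ with the two isomorphisms of the preceding paragraph yields the asserted chain of $G$-ring isomorphisms. The main obstacle is precisely this corner-ring step: one needs the simplicity of $\crp{F}Ke$ to know that $i$ is full, and then the Morita-theoretic fact that $z\mapsto zi$ identifies the centers; everything else is bookkeeping, the only genuinely delicate point being to check that the two a priori different $G$-actions on $\crp{F}(\theta)=\crp{F}(\phi)$ coincide, which is what Hypothesis~\ref{h:bconf2}(c) guarantees.
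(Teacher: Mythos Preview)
Your argument is correct. The paper itself does not supply a proof of this lemma; it is merely quoted from the author's earlier paper~\cite[Lemma~6.4]{ladisch10pre}, so there is no in-paper proof to compare against. Your approach---central characters for the outer isomorphisms $\Z(\crp{F}Ke)\iso\crp{F}(\theta)$ and $\Z(\crp{F}Lf)\iso\crp{F}(\phi)$, the corner-ring/Morita argument $z\mapsto zi$ for $\Z(\crp{F}Ke)\iso\Z(i\crp{F}Ki)$ (using that $\crp{F}Ke$ is simple so $i$ is full), and the check that Hypothesis~\ref{h:bconf2}(c) forces the two $G$-actions on $\crp{F}(\theta)=\crp{F}(\phi)$ to coincide---is exactly the standard one and almost certainly matches what is in the cited reference. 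The only point worth a remark is that the $G$-action on $\Z(i\crp{F}Ki)$ is via conjugation by elements of $\oH$ (not all of $\oG$), which you handle correctly by observing that $\oH$ already surjects onto $G$ and that $i$ is $\oH$-stable.
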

\begin{lemma}[{\cite[Lemma 6.5]{ladisch10pre}}]
\label{l:StensorFLfiFKi}
Set $Z= \Z(i\crp{F}Ki)$ and
let $S= (i\crp{F}Ki)^L$.
Then $S$ is a central simple $G$\nbd algebra over 
$Z \iso \crp{E} = \crp{F}(\theta)$ with dimension $n^2$ over $Z$,
and
\[\C_{i\crp{F}Ki}(S) = \crp{F}Li \iso \crp{F}Lf.\]
\end{lemma}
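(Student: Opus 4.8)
The plan is to put $A:=i\crp{F}Ki$ and analyse it as a central simple algebra over its center $Z=\Z(A)$, recognizing $S$ as the centralizer of a subalgebra isomorphic to $\crp{F}Lf$ and then invoking the double centralizer theorem. Since $\crp{F}K$ is semisimple and $e$ is a central primitive idempotent, $\crp{F}Ke$ is simple with center the field $\Z(\crp{F}Ke)\iso\crp{E}$, hence central simple over $\crp{E}$; as $ei=i\neq0$ the nonzero idempotent $i$ lies in $\crp{F}Ke$, so the corner $A=i(\crp{F}Ke)i$ is again central simple over $Z$ (the identification of $Z$ with $\crp{E}$ being Lemma~\ref{l:centerisos}). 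Because $i$ is $\oH$\nbd stable and $L\leq K$, conjugation by elements of $L$ restricts to an action on $A$; and since $i$ commutes with $\crp{F}L$ and $fi=i$, the map $x\mapsto xi$ is a unital algebra homomorphism $\crp{F}L\to A$ which factors through $\crp{F}Lf$ and, as $\crp{F}Lf$ is simple and $i\neq0$, induces an isomorphism $\crp{F}Lf\iso\crp{F}Li\subseteq A$ carrying $f$ to $i$ and each group element $\ell$ to the unit $\ell i$ of $\crp{F}Li$.

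The next step is to identify $S$. Using $iai=a$ for $a\in A$ and that $i$ commutes with $L$, one checks $(\ell i)\,a\,(\ell i)^{-1}=\ell a\ell^{-1}$ for $\ell\in L$; hence conjugation by $\ell$ on $A$ coincides with conjugation by the unit $\ell i\in\crp{F}Li$, so an element of $A$ is $L$\nbd fixed precisely when it centralizes every $\ell i$, that is, when it centralizes $\crp{F}Li$. Therefore
\[ S=A^{L}=\C_{A}(\crp{F}Li). \]
Moreover the conjugation action of $\oH$ on $A$ preserves $Z$, $S$ and $\crp{F}Li$, and since $L$ acts trivially on $S$ it descends to an action of $G=\oH/L$, making $S$ a $G$\nbd algebra over $Z\iso\crp{F}(\theta)$ with the expected $G$\nbd action on the center.

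The main obstacle is to verify that $\crp{F}Li$ is a $Z$\nbd subalgebra of $A$, i.e.\ that the two copies of $\crp{E}$ sitting in $A$ — the center $Z=\Z(\crp{F}Ke)i$, coming from $\theta$, and $\Z(\crp{F}Lf)i$, coming from $\phi$ — actually coincide. I would check this after extending scalars to $\compl$. Set $i_{\gamma}=e_{\phi}^{\gamma}e_{\theta}^{\gamma}$ for $\gamma\in\Gal(\crp{E}/\crp{F})$; these are pairwise orthogonal idempotents with $i=\sum_{\gamma}i_{\gamma}$, and because $e_{\phi}^{\gamma}$ projects the simple $\compl Ke_{\theta}^{\gamma}$\nbd module onto its $\phi^{\gamma}$\nbd isotypic component, of dimension $\ipch{\theta_{L}}{\phi}\phi(1)=n\phi(1)$, one obtains
\[ i\compl Ki=\bigoplus_{\gamma}i_{\gamma}\compl Ki_{\gamma},\qquad i_{\gamma}\compl Ki_{\gamma}\iso\mat_{n\phi(1)}(\compl). \]
The point is the identity $e_{\theta}^{\gamma}i=i_{\gamma}=e_{\phi}^{\gamma}i$, which shows that $Z\tensor_{\crp{F}}\compl$ and $\Z(\crp{F}Li)\tensor_{\crp{F}}\compl$ both equal $\bigoplus_{\gamma}\compl i_{\gamma}$ inside $i\compl Ki$; intersecting with $A$ gives $\Z(\crp{F}Li)=Z$, so $\crp{F}Li$ is central simple over $Z$.

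Finally, from the displayed decomposition $\dim_{\crp{F}}A=\dim_{\compl}(i\compl Ki)=[\crp{E}:\crp{F}](n\phi(1))^{2}$, and likewise $\dim_{\crp{F}}(\crp{F}Lf)=[\crp{E}:\crp{F}]\phi(1)^{2}$ (here $\crp{E}/\crp{F}$ is Galois, being an intermediate field of a cyclotomic extension of $\crp{F}$); dividing by $[Z:\crp{F}]=[\crp{E}:\crp{F}]$ gives $\dim_{Z}A=(n\phi(1))^{2}$ and $\dim_{Z}(\crp{F}Li)=\phi(1)^{2}$. The double centralizer theorem, applied to the central simple $Z$\nbd algebra $A$ and its central simple $Z$\nbd subalgebra $\crp{F}Li$, now yields that $S=\C_{A}(\crp{F}Li)$ is central simple over $Z$ with
\[ \dim_{Z}S=\dim_{Z}A/\dim_{Z}(\crp{F}Li)=n^{2}\qquad\text{and}\qquad \C_{A}(S)=\C_{A}\!\bigl(\C_{A}(\crp{F}Li)\bigr)=\crp{F}Li\iso\crp{F}Lf, \]
which together with the $G$\nbd algebra structure noted above is exactly the assertion.
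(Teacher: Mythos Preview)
Your argument is correct. The paper itself does not prove this lemma but merely cites it from \cite[Lemma~6.5]{ladisch10pre}, so there is no proof here to compare against; your route---identifying $S=(i\crp{F}Ki)^{L}$ as the centralizer $\C_{A}(\crp{F}Li)$ in the central simple $Z$-algebra $A=i\crp{F}Ki$, verifying via base change to $\compl$ that $\Z(\crp{F}Li)=Z$, and then invoking the double centralizer theorem together with the dimension count---is exactly the natural one and matches the approach in the cited reference.
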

Thus the equivalence class of $S$ defines an element
in $\BrCliff(G,\crp{E} )$.
The main result of this section is:
\begin{thm}\label{t:comparesubext}
  Assume Hypothesis~\ref{h:bconf2}.
  Then 
  \[ \brcls{\theta}{\kappa}{\crp{F}} \cdot [S]
      = \brcls{\phi}{\kappa_{|\oH}}{\crp{F}}
      \quad \text{(in $\BrCliff(G,\crp{E})$)}.
  \]
\end{thm}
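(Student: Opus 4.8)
The plan is to compute both classes $\brcls{\theta}{\kappa}{\crp{F}}$ and $\brcls{\phi}{\kappa_{|\oH}}{\crp{F}}$ from one and the same module, and then to extract the relation between them from the double centralizer theorem. Since $\theta$ is semi-invariant, $e=e_{(\theta,\crp{F})}$ is central in $\crp{F}\oG$, so $V:=\crp{F}\oG e$ is a nonzero $\theta$\nbd quasihomogeneous right $\crp{F}\oG$\nbd module, and hence $\brcls{\theta}{\kappa}{\crp{F}}=[\enmo_{\crp{F}K}V]$. I would then set $W:=Vi=\crp{F}\oG i$. Using the preceding lemmas (in particular $i$ is $\oH$\nbd stable, $fi=i$, and $\crp{F}Li\subseteq i\crp{F}Ki$ with $\crp{F}Li\iso\crp{F}Lf$) together with Hypothesis~\ref{h:bconf2}(c) (which forces $\phi$ to be semi-invariant in $\oH$ over $\crp{F}$), one checks that $W$ is a nonzero $\phi$\nbd quasihomogeneous right $\crp{F}\oH$\nbd module, so that also $\brcls{\phi}{\kappa_{|\oH}}{\crp{F}}=[\enmo_{\crp{F}L}W]=[\enmo_{\crp{F}Li}W]$. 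By Lemma~\ref{l:centerisos} the centers $\Z(\crp{F}Ke)$, $\Z(i\crp{F}Ki)$ and $\Z(\crp{F}Li)$ are identified with $\crp{E}$ as $G$\nbd rings; so the whole comparison takes place among modules over the three central simple $\crp{E}$\nbd algebras $\crp{F}Ke$, $i\crp{F}Ki$ and $\crp{F}Li$, each carrying a $G$\nbd action and — this is the role of Hypothesis~\ref{h:bconf2}(c) — with one and the same action of $G$ on $\crp{E}$.

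Two structural facts then do the work. First, $i$ is a full idempotent of the simple algebra $\crp{F}Ke$ (that is, $\crp{F}Ke\cdot i\cdot\crp{F}Ke=\crp{F}Ke$), so $\crp{F}Ke$ and $i\crp{F}Ki$ are Morita equivalent through $M\mapsto Mi$, a functor sending $V$ to $W$; therefore the restriction map
\[
  \rho\colon\enmo_{\crp{F}K}V\longrightarrow\enmo_{i\crp{F}Ki}W,\qquad\psi\longmapsto\psi|_W,
\]
is an isomorphism of $\crp{E}$\nbd algebras (it is precisely the isomorphism of endomorphism rings induced by this Morita equivalence), whence $\brcls{\theta}{\kappa}{\crp{F}}=[\enmo_{i\crp{F}Ki}W]$. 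Second, by Lemma~\ref{l:StensorFLfiFKi} and the double centralizer theorem, $\crp{F}Li$ and $S$ are mutually centralizing central simple subalgebras of $i\crp{F}Ki$ and $i\crp{F}Ki\iso\crp{F}Li\tensor_{\crp{E}}S$; since $W$ is a module over $i\crp{F}Ki$, the subalgebra $S$ acts on $W$ by multiplication, the centralizer of $S$ inside the central simple algebra $\enmo_{\crp{F}Li}W$ equals $\enmo_{i\crp{F}Ki}W$, and a second application of the double centralizer theorem yields $\enmo_{\crp{F}Li}W\iso S\tensor_{\crp{E}}\enmo_{i\crp{F}Ki}W$. Granting that all these isomorphisms are $G$\nbd equivariant, one obtains in $\BrCliff(G,\crp{E})$
\[
  \brcls{\phi}{\kappa_{|\oH}}{\crp{F}}=[\enmo_{\crp{F}Li}W]=[S]\cdot[\enmo_{i\crp{F}Ki}W]=[S]\cdot\brcls{\theta}{\kappa}{\crp{F}},
\]
which is the assertion, since $\BrCliff(G,\crp{E})$ is abelian.

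The hard part will be the $G$\nbd equivariance of the maps above, because the idempotent $i$ is only $\oH$\nbd stable and not $\oG$\nbd or $K$\nbd stable: Turull's $G$\nbd action on $\enmo_{\crp{F}K}V$ a priori uses liftings of $G$ to $\oG$, whereas the $G$\nbd actions on $W$, on $i\crp{F}Ki$ and on $S$ come from $\oH/L\iso G$. The resolution is to use that $\kappa_{|\oH}\colon\oH\to G$ is itself surjective (because $\oG=\oH K$): Turull's action on $\enmo_{\crp{F}K}V$ may be computed with liftings chosen inside $\oH$, and since $W=Vi$ with $i$ commuting with $\oH$, the formula $\psi^{\bar g}(w)=\psi(wh)h^{-1}$ (for $h\in\oH$ with $\kappa(h)=\bar g$, well defined modulo $L$) makes sense on $W$ and exhibits $\rho$ as $G$\nbd equivariant; a parallel lifting computation shows that $\oH$\nbd conjugation stabilizes $\crp{F}Li$ and $S$, restricts to the trivial action of $L$ on $S$ (because $\crp{F}Li=\C_{i\crp{F}Ki}(S)$), and renders the decomposition $i\crp{F}Ki\iso\crp{F}Li\tensor_{\crp{E}}S$ and the last displayed isomorphism $G$\nbd equivariant. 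The remaining points — the nonvanishing of $W$, the coincidence of the $\crp{F}L$\nbd and the $\crp{F}Li$\nbd module structures on $W$, the compatibility of the central-character identifications of Lemma~\ref{l:centerisos}, and the bookkeeping of left/right-module (opposite-algebra) conventions in Turull's construction so that the correction factor comes out as $[S]$ rather than $[S]^{-1}$ — should all be routine.
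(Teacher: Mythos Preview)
Your argument is essentially the same as the paper's: it also picks a $\theta$-quasihomogeneous $V$, sets $W=Vi$, identifies $\enmo_{\crp{F}K}V\iso\enmo_{i\crp{F}Ki}W$ via the full idempotent $i$ (writing out the map $\psi\mapsto\psi i$ and its inverse explicitly), and then uses the double centralizer theorem inside $\enmo_{\crp{E}}W$ to obtain $\enmo_{\crp{F}L}W\iso\enmo_{i\crp{F}Ki}W\tensor_{\crp{E}}S$. The only differences are cosmetic: you phrase the first step as Morita equivalence and organize the second by centralizing $S$ rather than centralizing $A$, and you are a bit more explicit about why one may compute the $G$-action via lifts in $\oH$.
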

This result is of course related to the results in~\cite{ladisch10pre},
but we didn't use the language of the Brauer-Clifford 
group in our previous paper.
Thus we give a translation here.
Theorem~\ref{t:comparesubext} is also related to
results in~\cite{turull12,turull13}.

The precise result depends on the following conventions:
All modules over $\crp{F}\oG$ and other group algebras are 
right modules,
and endomorphism rings also operate from the right.
Thus for any $\crp{F}$\nbd algebra $A$ operating
on a module $V$ we get a homomorphism
$A\into \enmo_{\crp{F}}(V)$, both operating from the right.
Then $\enmo_A(V)$ is simply the centralizer of the image of $A$
in $\enmo_{\crp{F}}(V)$.
With other conventions, one may have to 
replace $[S]$ by $[S^{\text{op}}] = [S]^{-1}$ 
in the formula of 
Theorem~\ref{t:comparesubext}.
\begin{proof}[Proof of Theorem~\ref{t:comparesubext}]
  Let $V$ be an $\crp{F}\oG$\nbd module with  
  $Ve=V$. 
  Then the $G$\nbd algebra $\enmo_{\crp{F}K}(V)$
  is in $\brcls{\theta}{\kappa}{\crp{F}}$.
  Moreover, set $W=Vi$.
  Then $W$ is an $\crp{F}\oH$\nbd module with
  $Wf=Vif=Vi=W$, and thus
  $\enmo_{\crp{F}L}(W)$ is a $G$\nbd algebra in 
  $\brcls{\phi}{\kappa_{|\oH}}{\crp{F}}$.
  
  We may also consider $W=Vi$ as a module over
  $i\crp{F}K i$.
  Consider the algebra 
  $A= \enmo_{i\crp{F}Ki}(Vi)$.
  Since $\oH$ acts on $Vi$ and 
  elements of $L$ commute with elements of $A$, 
  we see that $G\iso\oH/L$ acts on $A$.
  We claim that
  \[ A\iso \enmo_{\crp{F}K}(V)
  \]
  as $G$\nbd algebras over $Z$.
  To see this, consider the map $\enmo_{\crp{F}K}(V) \to A$
  sending
  $\phi\in \enmo_{\crp{F}K}(V)$
  to $\phi i\in A$ defined by
  $(vi)(\phi i)=vi\phi i$.
  It is easily verified that this map is an homomorphism
  of algebras.
  To see that it commutes with the action of $G\iso\oH/L$, 
  observe that  for $h\in \oH$, we have
    \[ v \phi^h i  = vh^{-1} \phi h i
       = vh^{-1}\phi i h = v (\phi i)^h.
    \]

  We get the inverse of $\phi\mapsto \phi i$ as follows:
  Since $\crp{F}Ke$ is a simple ring, we have
  $\crp{F}Ke = \crp{F}K i \crp{F}K$.
  Thus there are elements
  $a_{\nu}$, $b_{\nu}\in \crp{F}K$ such that
    $e = \sum_{\nu} a_{\nu}i b_{\nu}$.
  Then the map sending $\psi\in A = \enmo_{i\crp{F}Ki}(Vi)$
  to $\widehat{\psi}\in \enmo_{\crp{F}K}(V)$ defined
       by
       $v\widehat{\psi}= \sum_{\nu} va_{\nu}i \psi b_{\nu}$
  is the inverse of the above map.
  (In fact, it is well known that
  $\enmo_R(V)\iso \enmo_{iRi}(Vi)$
  for idempotents $i$ in a ring $R$ with
  $R=RiR$.)
  The claim is proved.
  
  It follows from the claim that 
  $A = \enmo_{i\crp{F}K i}(Vi) \in \brcls{\theta}{\kappa}{\crp{F}}$.
  To finish the proof it suffices to show that
  $A \tensor_{\crp{E}} S \iso B:= \enmo_{\crp{F}L}(W)$.
  
  Each of $A$, $B$, $i\crp{F}Ki$ and $S$ acts faithfully on
  $W=Vi$ by (right) multiplication, 
  and thus we may identify these algebras
  with subalgebras of
  $T:=\enmo_{\crp{E}}(W)$.
  Clearly, we have
  \[ A, \; S \subseteq B,\]
  and $A$ and $S$ centralize each other.
  Because $A = \C_{T}(i\crp{F}Ki)$ by definition,
  we have $\C_T(A) = i \crp{F}K i$ by the double centralizer
  property~\cite[Theorem~3.15]{farbNA}.
  It follows that
  \[ \C_B(A) = \C_T(A) \cap B
            = i\crp{F}K i \cap \C_T(i\crp{F}Li)
            = S
  \]
  by the definitions of $B$ and $S$.
  Since $A$, $B$ and $S$ are central simple algebras over $\crp{E}$,
  it follows that 
  $B\iso A\tensor_{\crp{E}} S$~\cite[Corollary~3.16]{farbNA}.
\end{proof}
\begin{cor}\label{c:multonesubext}
  Assume Hypothesis~\ref{h:bconf2} with $n=\ipch{\theta_L}{\phi}=1$.
  Then $\brcls{\theta}{\kappa}{\crp{F}}
        = \brcls{\phi}{\kappa_{|\oH}}{\crp{F}}$.  
\end{cor}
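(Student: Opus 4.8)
The plan is to read this off directly from Theorem~\ref{t:comparesubext}. That theorem, applied under Hypothesis~\ref{h:bconf2}, gives
\[
  \brcls{\theta}{\kappa}{\crp{F}} \cdot [S]
  = \brcls{\phi}{\kappa_{|\oH}}{\crp{F}}
  \quad\text{in } \BrCliff(G,\crp{E}),
\]
where $S = (i\crp{F}Ki)^L$ is the central simple $G$\nbd algebra over $Z\iso\crp{E}$ furnished by Lemma~\ref{l:StensorFLfiFKi}. So the whole task is to show that, in the special case $n=1$, the class $[S]$ is the identity element of $\BrCliff(G,\crp{E})$; the corollary is then immediate.

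First I would quote Lemma~\ref{l:StensorFLfiFKi}, which records that $\dim_Z S = n^2$. With $n=1$ this is $\dim_Z S = 1$, so the structure homomorphism $Z\to S$ is an isomorphism of $Z$\nbd algebras. It is in fact an isomorphism of $G$\nbd algebras over $Z$: by definition the $G$\nbd action on a $G$\nbd algebra over $Z$ restricts on the central image of $Z$ to the given action of $G$ on $Z$, and here that image is all of $S$. Next I would observe that $Z$, viewed as a $G$\nbd algebra over itself, represents the identity of $\BrCliff(G,Z)$: it is the trivial $G$\nbd algebra $\enmo_Z(V)$ attached to the rank\nbd one $ZG$\nbd module $V=Z$ (with $G$ acting on $V$ through its given action on $Z$), and such algebras are precisely the ones declared equivalent to the unit. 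Hence $[S]=1$ in $\BrCliff(G,\crp{E})$, and Theorem~\ref{t:comparesubext} collapses to $\brcls{\theta}{\kappa}{\crp{F}} = \brcls{\theta}{\kappa}{\crp{F}}\cdot[S] = \brcls{\phi}{\kappa_{|\oH}}{\crp{F}}$.

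Since this is just the specialization $n=1$ of the preceding theorem, there is no genuine obstacle. The only point deserving a sentence of justification is the identification of a one\nbd dimensional central simple $G$\nbd algebra over $\crp{E}$ with the identity of $\BrCliff(G,\crp{E})$, and that follows at once from the description of the Brauer\nbd Clifford group recalled in Section~\ref{sec:review}.
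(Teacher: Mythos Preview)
Your proposal is correct and follows exactly the same approach as the paper: apply Theorem~\ref{t:comparesubext} and note that $n=1$ forces $S\iso\crp{E}$, hence $[S]=1$ in $\BrCliff(G,\crp{E})$. The paper's proof is the one-line ``Clear since then $S\iso \crp{E}$ yields the trivial element of $\BrCliff(G,\crp{E})$''; you have simply unpacked that sentence carefully.
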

\begin{proof}
  Clear since then $S\iso \crp{E}$ yields the trivial element of
  $\BrCliff(G,\crp{E})$.
\end{proof}
The assumptions of the corollary hold in particular when
$\theta_L=\phi\in \Irr L$ and $\crp{F}(\theta)=\crp{F}(\phi)$.
To see another situation  
where Hypothesis~\ref{h:bconf2} holds with $n=1$,
we need a simple lemma, which
is a minor extension of a result of 
Riese and Schmid~\cite[Theorem~1]{rieschm96}.
(The bijection in the following lemma 
also preserves Schur indices over $\crp{F}$,
 but we will not need this fact.)
\begin{lemma}\label{l:cliffcorfields}
  Let $A\nteq \oG$ be a normal subgroup and
  $\tau\in \Irr A$.
  Let $\crp{F}$ be a field and 
  $\Gamma:= \Gal(\crp{F}(\tau)/\crp{F})$.
  Define
  \[ \oH = \{ g\in \oG \mid 
            \exists \alpha\in \Gamma
                          \colon \tau^{g\alpha}=\tau\}.
  \]
  Then induction defines a bijection between
  \[ \bigcup_{\alpha\in \Gamma} 
       \Irr(\oH\mid \tau^{\alpha})
     \quad \text{and} \quad
     \bigcup_{\alpha\in \Gamma}
       \Irr(\oG\mid \tau^{\alpha})
  \]
  commuting with field automorphisms over $\crp{F}$.
  In particular, 
  $\crp{F}(\psi^{\oG})=\crp{F}(\psi)$ for 
  $\psi\in \Irr(\oH\mid \tau)$. 
\end{lemma}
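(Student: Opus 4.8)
The plan is to build on the Riese--Schmid theorem~\cite[Theorem~1]{rieschm96}, which already provides a bijection between $\Irr(\oH\mid\tau)$ and $\Irr(\oG\mid\tau)$ via induction when $\tau$ is \emph{semi-invariant} in $\oH$ over $\crp{F}$; here $\oH$ is exactly the stabilizer of the $\Gamma$-orbit of $\tau$, so $\tau$ is semi-invariant in $\oH$ by construction. The extra content of the present lemma is the bookkeeping over the whole Galois orbit and the compatibility with field automorphisms over $\crp{F}$. First I would fix a set of orbit representatives, or rather simply note that for each $\alpha\in\Gamma$ the pair $(\tau^\alpha,\oG)$ has stabilizer $\oH$ as well (since $\oH$ is defined symmetrically in the whole orbit), so the Riese--Schmid bijection applies verbatim to each $\tau^\alpha$.

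Next I would check the interaction with the $\Gamma$-action. For $\psi\in\Irr(\oH\mid\tau^\alpha)$ and $\beta\in\Gamma$ (extended to an automorphism of $\crp{F}(\psi)$ over $\crp{F}$, or of a large enough Galois extension), induction commutes with applying $\beta$: $(\psi^{\oG})^\beta=(\psi^\beta)^{\oG}$, simply because the induced character is computed by an explicit formula with values in $\crp{F}(\psi)$ that is natural in $\psi$. Since $\psi\in\Irr(\oH\mid\tau^\alpha)$ forces $\psi^\beta\in\Irr(\oH\mid\tau^{\alpha\beta'})$ for the appropriate $\beta'$ (here one uses that $\oH$ is $\Gamma$-stable as a set, which is immediate from its definition), the union of the Riese--Schmid bijections over the orbit is a single bijection commuting with the $\Gamma$-action, hence with all field automorphisms over $\crp{F}$. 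The ``in particular'' clause then follows: if $\psi\in\Irr(\oH\mid\tau)$ and $\beta$ fixes $\crp{F}(\psi)$ pointwise, then $(\psi^{\oG})^\beta=(\psi^\beta)^{\oG}=\psi^{\oG}$, so $\crp{F}(\psi^{\oG})\subseteq\crp{F}(\psi)$; the reverse inclusion is automatic since $\psi$ (or an integer multiple of it) is a $\ints$-linear combination of restrictions of $\psi^{\oG}$ and its $\oG$-conjugates — more simply, Clifford correspondence / the injectivity of induction here means $\psi$ is recovered from $\psi^{\oG}$ by a Galois-equivariant procedure, forcing $\crp{F}(\psi)\subseteq\crp{F}(\psi^{\oG})$.

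The one point that needs a little care — and which I expect to be the main obstacle — is making the ``commutes with field automorphisms over $\crp{F}$'' statement precise when $\crp{F}(\psi)$ is not normal over $\crp{F}$, so that $\Gamma=\Gal(\crp{F}(\tau)/\crp{F})$ must be replaced by a genuine Galois group acting on a common splitting field. The clean way is to pass to a finite Galois extension $\crp{L}/\crp{F}$ containing all the relevant character values (of $\tau$ and of everything in the two unions), let $\Delta=\Gal(\crp{L}/\crp{F})$ act, observe that $\oH$ depends only on the $\Delta$-orbit of $\tau$ (equivalently the $\Gamma$-orbit, since $\crp{L}(\tau)=\crp{F}(\tau)$ up to the quotient $\Delta\to\Gamma$), and run the orbit argument with $\Delta$ in place of $\Gamma$; the $\Gamma$-indexed unions in the statement are then just a repackaging of the $\Delta$-orbit. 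With that set-up the compatibility with $\Delta$, and the consequent statement about fields of values, is the formal naturality of induction recorded above. Since the lemma is explicitly stated as a minor extension of~\cite[Theorem~1]{rieschm96}, I would keep the write-up short, citing that theorem for the hard core (the bijectivity of induction) and supplying only the Galois bookkeeping.
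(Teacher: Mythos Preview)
Your plan is essentially correct and close in spirit to the paper's proof, but you misjudge where the actual work lies. The paper does not cite Riese--Schmid for the core bijection; instead it derives the bijection $\Irr(\oH\mid\tau)\to\Irr(\oG\mid\tau)$ directly from the ordinary Clifford correspondence by factoring through the inertia group $\oG_{\tau}\leq\oH$, so the argument is entirely self-contained. Citing \cite[Theorem~1]{rieschm96} for this step is fine, but it is not what the paper does.

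More importantly, you assert that ``the union of the Riese--Schmid bijections over the orbit is a single bijection'' without justification, and you flag the Galois-closure technicality as the main obstacle. In fact the paper spends its one nontrivial paragraph precisely on the injectivity of the union (``no collapsing''): if $\psi\in\Irr(\oH\mid\tau^{\alpha_1})$ and $\xi\in\Irr(\oH\mid\tau^{\alpha_2})$ induce to the same $\chi$, then $\tau^{\alpha_1}$ and $\tau^{\alpha_2}$ are both constituents of $\chi_A$, hence $\oG$-conjugate by Clifford, say $\tau^{\alpha_1 g}=\tau^{\alpha_2}$; then $\tau^{g\,\alpha_1\alpha_2^{-1}}=\tau$, so $g\in\oH$ by the very definition of $\oH$, whence $\psi=\psi^g\in\Irr(\oH\mid\tau^{\alpha_2})$ and injectivity on that single set gives $\psi=\xi$. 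This step, not the Galois bookkeeping, is the heart of the lemma. By contrast, your worry about passing to a large common Galois extension is unnecessary: the paper handles the field-of-values statement exactly as you sketch (if $\alpha$ fixes $\psi^{\oG}$ then $(\psi^{\alpha})^{\oG}=\psi^{\oG}$, so $\psi^{\alpha}=\psi$ by the bijectivity just established), with no need to introduce an auxiliary splitting field.
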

\begin{proof}
  Write $\oG_{\tau}$ to denote the inertia group of $\tau$.
  Clifford correspondence~\cite[Theorem~6.11]{isaCTdov} 
  yields that induction defines bijections
  from 
  $\Irr(\oG_{\tau}\mid \tau)$ 
  onto
  $\Irr(\oH\mid \tau)$, 
  and from
  $\Irr(\oG_{\tau}\mid \tau)$ 
  onto
  $\Irr(\oG\mid \tau)$.
  Thus induction defines a bijection from
  $\Irr(\oH\mid \tau)$ onto $\Irr(\oG\mid\tau)$.
  The same statement holds with $\tau$ replaced by $\tau^{\alpha}$.
  (Of course, we have $\oG_{\tau^{\alpha}}=\oG_{\tau}$.)
  
  It follows that induction yields a surjective map
  \[ 
     \bigcup_{\alpha\in \Gamma} 
            \Irr(\oH\mid \tau^{\alpha})
      \to 
      \bigcup_{\alpha\in \Gamma} 
             \Irr(\oG\mid \tau^{\alpha}) 
  \]
  which is injective when restricted to some
  $\Irr(\oH\mid \tau^{\alpha})$.
  It remains to show that there occurs no collapsing between two
  different sets 
  $\Irr(\oH\mid \tau^{\alpha_1})$ and 
  $\Irr(\oH\mid \tau^{\alpha_2})$.
  So assume that $\psi^{\oG} = \xi^{\oG}=\chi$ for
  $\psi\in \Irr(\oH\mid \tau^{\alpha_1})$
  and  $\xi\in \Irr(\oH\mid \tau^{\alpha_2})$.
  Then $\tau^{\alpha_1}$ and $\tau^{\alpha_2}$ are both 
  irreducible constituents of  $\chi_A$.
  By Clifford's theorem there is $g\in \oG$ with 
  $\tau^{\alpha_1 g}=\tau^{\alpha_2}$
  or $\tau^{g \alpha_1 \alpha_2^{-1}}=\tau$.
  By the very definition of $\oH$ we see that $g \in \oH$.
  But then $\psi= \psi^{g}\in \Irr(\oH\mid \tau^{\alpha_2})$.
  Since induction is bijective when restricted to
  $\Irr(\oH\mid \tau^{\alpha_2})$, it follows that
  $\psi=\xi$ as wanted.
  
  Obviously, induction commutes with field automorphisms:
  We have $(\psi^{\alpha})^{\oG} = (\psi^{\oG})^{\alpha}$.
  We always have $\crp{F}(\psi^{\oG})\subseteq \crp{F}(\psi)$.
  If $\alpha\in \Gal(\crp{F}(\psi)/\crp{F}(\psi^{\oG}))$,
  then $\psi^{\oG} = (\psi^{\oG})^{\alpha}= (\psi^{\alpha})^{\oG}$
  and thus $\psi=\psi^{\alpha}$ by bijectivity.
  It follows $\alpha=1$ and thus
  $\crp{F}(\psi)=\crp{F}(\psi^{\oG})$ as claimed.  
  The proof is finished.
\end{proof}
\begin{cor}\label{c:subextclifford}
  Let 
  \[ \begin{tikzcd}
       1 \rar & K \rar & \oG \rar{\kappa} & G \rar & 1 
     \end{tikzcd}
  \]
  be an exact sequence of finite groups and
  $\theta\in \Irr K$ be semi-invariant
  over $\crp{F}$ in $\oG$.
  Assume there is $A\nteq \oG$ with $A\subseteq K$
  and let $\tau $ be an irreducible constituent of
  $\theta_A$.
  Set 
  \[ \oH = \{ g\in \oG 
              \mid
              \exists \alpha\in \Gal(\crp{F}(\tau)/\crp{F})
              \colon \tau^{g\alpha}=\tau\}.
  \]
  Then $\oG = \oH K$, and for $L=\oH\cap K$ there is a unique
  $\phi\in \Irr(L\mid \tau)$
  with $\theta=\phi^K$,
  and we have
  $\crp{F}(\theta)=\crp{F}(\phi)$ and
  $\brcls{\theta}{\kappa}{\crp{F}}
          = \brcls{\phi}{\kappa_{|\oH}}{\crp{F}}$
  for this $\phi$.
\end{cor}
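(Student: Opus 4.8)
The plan is to reduce the statement to Lemma~\ref{l:cliffcorfields} together with Corollary~\ref{c:multonesubext}: after setting everything up correctly, I would check that Hypothesis~\ref{h:bconf2} holds with $n = 1$, and then Corollary~\ref{c:multonesubext} delivers the claimed equality in $\BrCliff(G,\crp{F}(\theta))$. I expect the only genuinely delicate point to be the verification of condition~(c) of Hypothesis~\ref{h:bconf2} --- that one field automorphism can be chosen fixing both $\theta$ and $\phi$ simultaneously --- since a priori $\crp{F}(\tau)$ need not be contained in $\crp{F}(\theta)$; I would handle this by passing to the (abelian) extension $\crp{F}(\theta,\tau)$ of $\crp{F}$.

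First I would prove $\oG = \oH K$. Since $A \nteq \oG$ and $A \leq K$ we have $A \nteq K$, so by Clifford's theorem the irreducible constituents of $\theta_A$ form a single $K$\nbd orbit, one of which is $\tau$. Given $g \in \oG$, semi-invariance of $\theta$ in $\oG$ over $\crp{F}$ provides $\gamma \in \Gal(\crp{F}(\theta)/\crp{F})$ with $\theta^{g\gamma} = \theta$; I would extend $\gamma$ to $\widetilde{\gamma} \in \Gal(\crp{F}(\theta,\tau)/\crp{F})$ (possible since $\crp{F}(\theta,\tau)/\crp{F}$ is abelian) and set $\gamma' = \widetilde{\gamma}_{|\crp{F}(\tau)} \in \Gamma := \Gal(\crp{F}(\tau)/\crp{F})$. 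Restricting $\theta^{g\widetilde{\gamma}} = \theta$ to $A$ gives $(\theta_A)^{g\widetilde{\gamma}} = \theta_A$, so $\tau^{g\gamma'}$ is again a constituent of $\theta_A$, and by Clifford's theorem there is $k \in K$ with $\tau^{g\gamma'} = \tau^{k}$; since conjugation commutes with the field automorphism $\gamma'$, this rewrites as $\tau^{(gk^{-1})\gamma'} = \tau$, so $gk^{-1} \in \oH$ and $g \in \oH K$. Thus $\oG = \oH K$, whence $L := \oH \cap K = \Ker \kappa_{|\oH}$ and $\kappa_{|\oH}\colon \oH \to G$ is surjective; in particular one obtains a commutative diagram with exact rows as required in Hypothesis~\ref{h:bconf2}.

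Next I would construct $\phi$ and check conditions~(a), (b). Here I would apply Lemma~\ref{l:cliffcorfields} inside the group $K$, to the normal subgroup $A \nteq K$ and the character $\tau$; the subgroup that the lemma calls ``$\oH$'', formed inside $K$, is $\{g \in K : \exists \alpha \in \Gamma,\ \tau^{g\alpha} = \tau\} = K \cap \oH = L$. The lemma then says that induction is a bijection $\Irr(L \mid \tau) \to \Irr(K \mid \tau)$ with $\crp{F}(\psi^{K}) = \crp{F}(\psi)$, and that induction is injective on $\bigcup_{\alpha \in \Gamma} \Irr(L \mid \tau^{\alpha})$. Since $\tau \mid \theta_A$ we have $\theta \in \Irr(K \mid \tau)$, so there is a unique $\phi \in \Irr(L \mid \tau)$ with $\phi^{K} = \theta$, and $\crp{F}(\phi) = \crp{F}(\phi^{K}) = \crp{F}(\theta) =: \crp{E}$. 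Frobenius reciprocity gives $n := \ipch{\theta_L}{\phi} = \ipch{\phi^{K}}{\theta} = 1$. This settles~(a) and~(b).

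Finally I would verify condition~(c) and conclude. Fix $h \in \oH$ and choose $\gamma \in \Gal(\crp{E}/\crp{F})$ with $\theta^{h\gamma} = \theta$. As induction commutes with conjugation by $h$ (note $K \nteq \oG$, and $L \nteq \oH$) and with field automorphisms, $(\phi^{h\gamma})^{K} = (\phi^{K})^{h\gamma} = \theta^{h\gamma} = \theta = \phi^{K}$. I would then show $\phi^{h\gamma}$ lies over some $\tau^{\alpha}$, $\alpha \in \Gamma$: it lies over $\tau^{h\gamma}$, and extending $\gamma$ to $\widetilde{\gamma} \in \Gal(\crp{F}(\theta,\tau)/\crp{F})$ with restriction $\gamma' \in \Gamma$, and using that $h \in \oH$ forces $\tau^{h} = \tau^{\delta}$ for some $\delta \in \Gamma$, one gets $\tau^{h\gamma} = \tau^{h\gamma'} = \tau^{\delta\gamma'}$ with $\delta\gamma' \in \Gamma$. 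Hence $\phi$ and $\phi^{h\gamma}$ both lie in $\bigcup_{\alpha \in \Gamma}\Irr(L \mid \tau^{\alpha})$ and induce to the same irreducible character of $K$, so $\phi^{h\gamma} = \phi$ by the injectivity recorded above; thus~(c) holds with $\gamma_h = \gamma$. Now Hypothesis~\ref{h:bconf2} is satisfied with $n = 1$, and Corollary~\ref{c:multonesubext} yields $\brcls{\theta}{\kappa}{\crp{F}} = \brcls{\phi}{\kappa_{|\oH}}{\crp{F}}$, completing the proof.
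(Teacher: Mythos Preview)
Your proposal is correct and follows essentially the same approach as the paper: establish $\oG=\oH K$ via Clifford's theorem and semi-invariance, use Lemma~\ref{l:cliffcorfields} (applied inside $K$) to obtain the unique $\phi$ with $\phi^{K}=\theta$ and $\crp{F}(\phi)=\crp{F}(\theta)$, verify Hypothesis~\ref{h:bconf2} with $n=1$, and conclude by Corollary~\ref{c:multonesubext}. Your handling of the passage through $\crp{F}(\theta,\tau)$ to make sense of $\tau^{h\gamma}$ is slightly more explicit than the paper's, but the argument is the same.
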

\begin{proof}
  We first show that $\oG=\oH K$.
  Let $g\in \oG$.
  Since $\theta$ is semi-invariant in $\oG$,
  there is an $\alpha \in \Gal( \crp{F}(\theta)/\crp{F} ) $
  such that $\theta^{g\alpha}=\theta$.
  We may extend $\alpha$ to an automorphism~$\beta$ of
  $\crp{F}(\theta,\tau)$.
  Then $\tau^{g\beta}$ is a constituent of
  $\theta_A$. 
  By Clifford's theorem, we have 
  $\tau^{g\beta k}=\tau$ for some $k\in K$.
  Thus $gk\in \oH$, which shows $\oG = \oH K$.
  
  Lemma~\ref{l:cliffcorfields} yields the existence of
  a $\phi$ with $\theta=\phi^K$.
  
  We verify that Hypothesis~\ref{h:bconf2} holds.
  We have $n= \ipch{\theta_L}{\phi}=\ipch{\theta}{\phi^K}=1$.
  Lemma~\ref{l:cliffcorfields} yields that
  $\crp{F}(\phi)=\crp{F}(\theta)$.
  Finally, let $h\in \oH$.
  Then there is $\alpha\in \Gal(\crp{F}(\theta)/\crp{F})$
  with $\theta^{h\alpha}=\theta$.
  Then $(\phi^{h\alpha})^K= \theta=\phi^K$.
  Since 
  $\phi^{h\alpha}\in \Irr(L\mid \tau^{h\alpha})$
  and $\tau^{h\alpha}= \tau^{\beta}$ for some
  field automorphism $\beta$,
  it follows from Lemma~\ref{l:cliffcorfields} that
  $\phi^{h\alpha}=\phi$.
  
  We have now shown that Hypothesis~\ref{h:bconf2}
  holds with $n=1$. 
  Corollary~\ref{c:multonesubext} yields the result.
\end{proof}
\begin{remark}
  It follows directly from Lemma~\ref{l:cliffcorfields}
  that for each subgroup 
  $U$ with $K\leq U\leq \oG$ there is a correspondence between
  $\Irr(U\mid \tau)$ and $\Irr(U\cap \oH\mid \tau)$.
  It is also elementary to prove that these correspondences commute
  with restriction and induction of characters.
  In fact, every property of the correspondence that follows
  from the equality 
  $\brcls{\theta}{\kappa}{\crp{F}}
            = \brcls{\phi}{\kappa_{|\oH}}{\crp{F}}$
  and the results of Turull~\cite{turull09}
  can be proved elementarily, 
  without using the Brauer-Clifford group.
  But we will need Corollary~\ref{c:subextclifford}
  in inductive arguments to come later,
  and there we can not do without the language 
  of the Brauer-Clifford group.
\end{remark}
Proposition~\ref{p:subgroupsmain} is a special case of
Corollary~\ref{c:subextclifford}, applied 
over a bigger field:
\begin{proof}[Proof of Proposition~\ref{p:subgroupsmain}]
Assume the situation of Theorem~\ref{main}.
Recall that $C$, $N_1 \nteq \oG_1$ with $C\subseteq N_1$,
and that $\theta_1 = \lambda^{N_1}$ with
$\lambda\in \Irr C$.
By definition in Proposition~\ref{p:subgroupsmain},
$\widehat{U}_1 =(\oG_1)_{\theta_1}$ is the inertia group of 
$\theta_1$ in $\oG_1$,
and $V=(\oG_1)_{\lambda}$ is the inertia group of $\lambda$ in $\oG_1$.
Clearly, $\lambda^{N_1}=\theta_1\in \Irr N_1$ 
implies that $V\cap N_1 = C$.

We apply Corollary~\ref{c:subextclifford} to the exact sequence
\[ \begin{tikzcd}
     1 \rar & N_1 \rar & \widehat{U}_1 \rar & U \rar & 1
   \end{tikzcd}
\]
with $C$ instead of $A$, $\lambda$ instead of $\tau$ 
and the field $\crp{F}(\lambda)$ instead of $\crp{F}$.
Note that $\oH = V$.
We get that $\widehat{U}_1 = VN_1$
and $\brcls{\theta_1}{(\kappa_1)_{|\widehat{U}_1}}{\crp{F}(\lambda)}
    = \brcls{\lambda}{(\kappa_1)_{|V}}{\crp{F}(\lambda)}$.
By Lemma~\ref{l:cliffcorfields},
induction induces a bijection
$\Irr(X\cap V \mid \lambda) \to \Irr( X \mid \theta_1)$.
\end{proof}

\section{A subgroup of the Schur-Clifford group}
\label{sec:cycltoabel}
Let $\crp{E}$ be a field and $G$ a group which acts on $\crp{E}$
and fixes the subfield $\crp{F}$.
Recall that in~\cite{ladisch15b}            
we defined the \defemph{Schur-Clifford group}
$\SC^{(\crp{F})}(G,\crp{E})$ to be the subset
of $\BrCliff(G,\crp{E})$
of all $ \brcls{\theta}{\kappa}{\crp{F}}$
such that $(\theta,\kappa)$ is a Clifford pair over $G$
that induces the given action of $G$ on $\crp{F}(\theta)=\crp{E}$.
We also showed that $\SC^{(\crp{F})}(G,\crp{E})$ is a subgroup
of the Brauer-Clifford group, if $\crp{E}$ is contained in a
cyclotomic extension of $\crp{F}$.
Further, in that case we have 
\[ \SC^{(\crp{F})}(G,\crp{E})
    = \SC^{(\crp{E}^G)}(G,\crp{E})
    =: \SC(G,\crp{E}).
\]

Fix a finite group $G$ and a field $\crp{F}$.
We consider two subclasses
of Clifford pairs
$(\theta,\kappa)$
where, as usual, the homomorphism
$\kappa$ is part of an exact sequence
\[\begin{tikzcd}[ampersand replacement =\&]
                1 \rar \& N \rar \& \oG \rar{\kappa} \& G \rar \& 1
   \end{tikzcd}  
\]
and $\theta\in \Irr N$.
The main result of this section is that both subclasses yield the same 
subgroup of the Schur-Clifford group.
\begin{defi}\label{defi:subclasses}
Let $\macl$ be the class of all Clifford pairs 
$(\theta,\kappa)$ such that
\begin{enums}
\item \label{i:cond_abel}
      there is $A\nteq \oG$ with $A\subseteq N$, 
\item \label{i:cond_induc}
      there is a linear character $\lambda\in \Lin A$ such that 
      $\theta=\lambda^N$.
\end{enums}
Let $\cyccl$ be the subclass of $\macl$ 
containing the pairs in $\macl$ such that 
\begin{enums}[resume]
\item the $A$ above is cyclic and $\lambda$ is faithful,
      \label{i:cond_cyclic}
\item $\lambda$ is semi-invariant in $\oG$ over $\crp{F}$, and
      \label{i:cond_semiinv}      
\item  $N/A\iso \Gal(\crp{F}(\lambda)/\crp{F}(\theta))$.
       \label{i:cond_galiso}
\end{enums}
\end{defi}
Recall that for a class $\mathcal{X}$ of Clifford pairs, we defined
\[ \SC_{\mathcal{X}}(G,\crp{E})
    = \{ \brcls{\theta}{\kappa}{\crp{F}}\in \BrCliff(G,\crp{E})
         \mid 
         (\theta,\kappa) \in \mathcal{X}
      \}.
\]
In this terminology,  Theorem~\ref{main} says
that $\SC(G,\crp{E})=\SC_{\cyccl}(G,\crp{E})$.

There is some redundancy in these conditions:
\begin{lemma}\label{l:galisoredundant}
  If~\ref{i:cond_abel}, \ref{i:cond_induc}
  and~\ref{i:cond_semiinv} of Definition~\ref{defi:subclasses} 
  hold for some Clifford pair
  $(\theta,\kappa)$,
  so does~\ref{i:cond_galiso}.
  (Thus Condition~\ref{i:maingaliso} in Theorem~\ref{main} 
  follows from the other conditions
  in Theorem~\ref{main}.)
\end{lemma}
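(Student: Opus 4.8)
We are given a Clifford pair $(\theta,\kappa)$ with exact sequence $1\to N\to\oG\xrightarrow{\kappa}G\to1$, a normal subgroup $A\nteq\oG$ with $A\subseteq N$, and a linear character $\lambda\in\Lin A$ with $\theta=\lambda^N$, and we assume $\lambda$ is semi-invariant in $\oG$ over $\crp{F}$. We must show $N/A\iso\Gal(\crp{F}(\lambda)/\crp{F}(\theta))$. The plan is to produce this isomorphism explicitly via the semi-invariance action of $N$ on $\crp{F}(\lambda)$.

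**The construction.** First I would restrict attention to $N$ acting on $\lambda$: since $\lambda$ is semi-invariant in $\oG$ over $\crp{F}$, it is in particular semi-invariant in $N$ over $\crp{F}$, so by~\cite[Lemma~2.1]{i81b} there is a well-defined homomorphism $N\to\Gal(\crp{F}(\lambda)/\crp{F})$, $g\mapsto\alpha_g$, determined by $\lambda^{g\alpha_g}=\lambda$. Since $A$ is abelian and $\lambda\in\Lin A$ is $A$-invariant, $A$ lies in the kernel, so this factors through $N/A$. The key claims are then: (1) the image of $N/A$ is exactly $\Gal(\crp{F}(\lambda)/\crp{F}(\theta))$, and (2) the map is injective on $N/A$. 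For surjectivity onto the right subgroup, note that an automorphism $\alpha\in\Gal(\crp{F}(\lambda)/\crp{F})$ fixes $\crp{F}(\theta)$ if and only if it fixes $\theta=\lambda^N=\sum_{t\in N/A}\lambda^t$ (a sum over coset representatives), which holds iff $\alpha$ permutes the Galois orbit-constituents appropriately; I would argue that $\crp{F}(\theta)$ is precisely the fixed field of the image of $N/A$. For injectivity: if $\alpha_g=1$ for $g\in N$, then $\lambda^g=\lambda$, so $g$ lies in the inertia group of $\lambda$ in $N$; because $\theta=\lambda^N$ is irreducible, the inertia group of $\lambda$ in $N$ is exactly $A$ (this is the standard fact that $\lambda^N$ irreducible forces $N_\lambda=A$), hence $gA=A$ and the map on $N/A$ is injective.

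**The main obstacle.** The delicate point is pinning down that the fixed field of $\{\alpha_g : g\in N\}$ acting on $\crp{F}(\lambda)$ is exactly $\crp{F}(\theta)$, rather than merely contained in or containing it. One inclusion is easy: for $g\in N$ we have $\theta^{\alpha_g}=(\lambda^N)^{\alpha_g}=(\lambda^{\alpha_g})^N=(\lambda^{g^{-1}})^N=\lambda^N=\theta$ (using that inner conjugation by $g\in N$ does not change the induced character), so every $\alpha_g$ fixes $\theta$ and hence $\crp{F}(\theta)\subseteq\crp{F}(\lambda)^{\langle\alpha_g\rangle}$. For the reverse inclusion I would use a counting/degree argument: by the injectivity just established, $\lvert N/A\rvert=\lvert\{\alpha_g:g\in N\}\rvert=[\crp{F}(\lambda):\crp{F}(\lambda)^{\langle\alpha_g\rangle}]$, while on the other hand $\theta(1)=[N:A]\lambda(1)=[N:A]$ since $\theta=\lambda^N$, and one knows (e.g. from the theory of the Schur index and the fact that $\lambda$ faithful, $\theta$ induced, together with semi-invariance) that $[\crp{F}(\lambda):\crp{F}(\theta)]$ divides $[N:A]$; combining gives equality of the two degrees, forcing $\crp{F}(\lambda)^{\langle\alpha_g\rangle}=\crp{F}(\theta)$ by the tower law. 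If a cleaner route is available, I would instead argue directly that $\crp{F}(\theta)=\crp{F}(\lambda)^H$ where $H$ is the image, by observing that an element of $\crp{F}(\lambda)$ fixed by all $\alpha_g$ is a $\crp{F}$-combination of the values of $\theta$; the technical care needed here is the one genuine difficulty, everything else being the routine facts that $\lambda^N$ irreducible $\Rightarrow N_\lambda=A$ and that induction commutes with field automorphisms.

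**Assembling.** Putting these together: the map $N/A\to\Gal(\crp{F}(\lambda)/\crp{F})$, $gA\mapsto\alpha_g$, is an injective homomorphism with image contained in $\Gal(\crp{F}(\lambda)/\crp{F}(\theta))$, and since its image has order equal to $[\crp{F}(\lambda):\crp{F}(\theta)]=\lvert\Gal(\crp{F}(\lambda)/\crp{F}(\theta))\rvert$ (the extension being abelian, hence Galois), it is onto. Thus $N/A\iso\Gal(\crp{F}(\lambda)/\crp{F}(\theta))$, which is condition~\ref{i:cond_galiso}; this also yields condition~\ref{i:maingaliso} of Theorem~\ref{main} from the remaining conditions, as claimed in the parenthetical remark.
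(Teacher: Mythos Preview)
Your overall structure matches the paper's: build the homomorphism $N/A\to\Gal(\crp{F}(\lambda)/\crp{F})$ from semi-invariance, show its image lies in $\Gal(\crp{F}(\lambda)/\crp{F}(\theta))$, and show it is bijective onto that subgroup. Your injectivity argument (via $N_\lambda=A$ from irreducibility of $\lambda^N$) and the containment of the image (via $\theta^{\alpha_g}=\theta$) are exactly what the paper does.

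The gap is in surjectivity. Your degree-counting argument is not complete: the assertion that $[\crp{F}(\lambda):\crp{F}(\theta)]$ divides $[N:A]$ is precisely the point at issue, and invoking ``the theory of the Schur index and the fact that $\lambda$ is faithful'' does not prove it (and note that faithfulness of $\lambda$ is condition~(c), which is \emph{not} among the hypotheses of the lemma). The paper's argument here is a one-liner you are missing: given $\alpha\in\Gal(\crp{F}(\lambda)/\crp{F}(\theta))$, the character $\lambda^{\alpha}$ is a constituent of $(\theta^{\alpha})_A=\theta_A$, so by Clifford's theorem there is $n\in N$ with $\lambda^{\alpha n}=\lambda$, whence $\alpha=\alpha_n$ lies in the image. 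This replaces your entire ``main obstacle'' paragraph. (Your proposed degree count can be made to work, but only by essentially inserting this same Clifford step to bound $[\crp{F}(\lambda):\crp{F}(\theta)]$ by the number of $N$-conjugates of $\lambda$; at that point the direct argument is shorter.)
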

\begin{proof}
   Since $\lambda$ is semi-invariant in $\oG$ over
   $\crp{F}$, 
   there is for every $n\in N$
   an $\alpha_n\in \Gal(\crp{F}(\lambda)/\crp{F})$
   such that $\lambda^{n\alpha_n}=\lambda$.
   Since $(\lambda^n)^N= \theta^n=\theta$, we must have
   $\alpha_n \in \Gal(\crp{F}(\lambda)/\crp{F}(\theta))$.
    This defines an homomorphism
    from $N/A$ into 
    $\Gal(\crp{F}(\lambda)/\crp{F}(\theta))$ with kernel
    $N_{\lambda}/A$~\cite[Lemma~2.1]{i81b}.
    Since $\lambda^{N}\in \Irr N$, 
    we must have $N_{\lambda} = A$.
    
    Now let $\alpha\in \Gal(\crp{F}(\lambda)/\crp{F}(\theta))$.
    Then $\lambda^{\alpha}$ and $\lambda$ are constituents
    of $\theta=\theta^{\alpha}$ and thus conjugate
    in $N$, so that
    $\lambda^{\alpha n}= \lambda$ for some $n\in N$.
    It follows that 
    $\alpha=\alpha_n\in \Gal(\crp{F}(\lambda)/\crp{F}(\theta))$.
    Thus $n\mapsto \alpha_n\in \Gal(\crp{F}(\lambda)/\crp{F}(\theta))$
    is surjective.
    Thus  Condition~\ref{i:cond_galiso} holds.
\end{proof}
The following result follows directly from the definitions.
\begin{lemma}\label{l:cliffpairkernel}
  Let $(\theta, \kappa\colon \oG\to G)$ be a Clifford pair and 
  $K\subseteq \Ker\theta$ a normal subgroup of $\oG$.
  Then 
  $\brcls{\overline{\theta}}{\overline{\kappa}}{\crp{F}}
     = \brcls{\theta}{\kappa}{\crp{F}}$,
  where $\overline{\kappa}\colon \oG/K\to G$
  is the map induced by $\kappa$ and
  $\overline{\theta}\in \Irr(N/K)$ is the character
  $\theta$ viewed as character of $N/K$
  (where $N=\Ker\kappa$).
\end{lemma}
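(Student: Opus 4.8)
The plan is to compute both sides of the asserted equality from a single quasihomogeneous module, using the description of $\brcls{\cdot}{\cdot}{\crp F}$ recalled in Section~\ref{sec:review}. Some preliminary bookkeeping comes first. Since $\theta\in\Irr N$ we have $\Ker\theta\subseteq N=\Ker\kappa$, so $K\subseteq N$ and the induced map $\overline\kappa\colon\oG/K\to G$ is well defined with kernel $N/K$; because $K\subseteq\Ker\theta$ the character $\overline\theta\in\Irr(N/K)$ makes sense. For $k\in K\subseteq N$ and $g\in\oG$ one has $\theta^{gk}=\theta^{g}$ (the class function $\theta^{g}$ on $N$ is invariant under $N$-conjugation), so the action of $\oG$ on $\Irr N$ factors through $\oG/K$; consequently $\overline\theta$ is semi-invariant in $\oG/K$ over $\crp F$, $\crp F(\overline\theta)=\crp F(\theta)$, and $(\theta,\kappa)$ and $(\overline\theta,\overline\kappa)$ induce the same action of $G$ on this field, so $\brcls{\theta}{\kappa}{\crp F}$ and $\brcls{\overline\theta}{\overline\kappa}{\crp F}$ both live in $\BrCliff(G,\crp F(\theta))$.

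Next I would show that $K$ acts trivially on every $\theta$-quasihomogeneous $\crp F\oG$-module. Let $\widehat K=\tfrac1{|K|}\sum_{k\in K}k$, which lies in $\Z(\crp F\oG)$ since $K\nteq\oG$ and is fixed by $\Gal(\crp F(\theta)/\crp F)$. The point is that $e=e_{(\theta,\crp F)}$ satisfies $e\widehat K=e$: it suffices that $e_\theta\widehat K=e_\theta$ for the central primitive idempotent $e_\theta$ of $\compl N$, which holds because $\widehat K$ acts as the identity on the $\theta$-representation exactly when $K\subseteq\Ker\theta$, and then $e=\sum_\gamma e_\theta^\gamma$ inherits the relation since $\widehat K^\gamma=\widehat K$. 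Hence for a nonzero $V$ with $Ve=V$ (such exist, e.g.\ $\crp F\oG e$) we get $V=Ve=V(e\widehat K)=(Ve)\widehat K=V\widehat K$, and in characteristic zero this forces $K$ to act trivially on $V$. So $V$ is naturally an $\crp F[\oG/K]$-module $\overline V$; writing $\pi\colon\crp FN\to\crp F[N/K]$ for the natural projection, a short coset-sum computation (using that $\theta$ is constant on $K$-cosets) gives $\pi(e)=e_{(\overline\theta,\crp F)}=:\overline e$, and since $\pi(e)$ acts on $\overline V$ exactly as $e$ acts on $V$, the module $\overline V$ is nonzero and $\overline\theta$-quasihomogeneous.

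It then remains to identify the $G$-algebras computing the two sides. By definition $\brcls{\theta}{\kappa}{\crp F}=[\enmo_{\crp FN}(V)]$ and $\brcls{\overline\theta}{\overline\kappa}{\crp F}=[\enmo_{\crp F[N/K]}(\overline V)]$. An $\crp F$-linear endomorphism of $V=\overline V$ commutes with the $N$-action if and only if it commutes with the action of the image $N/K$ of $N$ in $\oG/K$, so these two endomorphism rings coincide as subrings of $\enmo_{\crp F}(V)$. The extra structure also matches: the $G$-action on each is conjugation by preimages of elements of $G$, taken in $\oG$ respectively in $\oG/K$, and a preimage $g\in\oG$ and its image $gK\in\oG/K$ act by the same operator on $V=\overline V$, so the two $G$-actions agree; likewise the structural embeddings of $\crp F(\theta)$ into the centre coincide, since $\omega_{\overline\theta}\circ\pi=\omega_\theta$ on $\Z(\crp FNe)$ and corresponding elements act identically on $V$. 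Hence $\enmo_{\crp FN}(V)=\enmo_{\crp F[N/K]}(\overline V)$ as central simple $G$-algebras over the $G$-field $\crp F(\theta)$, and the two Brauer-Clifford elements agree. I expect no genuine obstacle; the only delicate part is this last bookkeeping---confirming that we are comparing elements of the same Brauer-Clifford group and that the algebras coincide as $G$-algebras over $\crp F(\theta)$, not merely as abstract algebras.
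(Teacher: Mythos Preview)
Your argument is correct and is exactly the unpacking of the definitions that the paper has in mind: the paper itself offers no proof beyond the sentence ``The following result follows directly from the definitions,'' and your proposal carries out precisely that verification (same quasihomogeneous module for both pairs via the observation $e\widehat K=e$, and identification of the two endomorphism $G$-algebras over $\crp F(\theta)$).
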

Note that when Conditions~\ref{i:cond_abel}, \ref{i:cond_induc}
  and~\ref{i:cond_semiinv} above hold,
then $K=\Ker\lambda$ is normal in $\oG$, and 
also $K=\Ker \theta$.
Thus we can factor out $K$ and get a Clifford pair 
such that \ref{i:cond_cyclic} is true, too.
\begin{prop}\label{p:cycltoabel}
  Let\/ $\crp{E}$ be a field extension of\/ $\crp{F}$
  on which $G$ acts as $\crp{F}$\nbd algebra.
  Then
  \[ \SC_{\macl}(G,\crp{E})= \SC_{\cyccl}(G,\crp{E}). 
  \]
\end{prop}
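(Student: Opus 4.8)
The inclusion $\SC_{\cyccl}(G,\crp{E}) \subseteq \SC_{\macl}(G,\crp{E})$ is trivial, since $\cyccl \subseteq \macl$ by definition. So the work is to prove $\SC_{\macl}(G,\crp{E}) \subseteq \SC_{\cyccl}(G,\crp{E})$: given a Clifford pair $(\theta,\kappa)$ in $\macl$ whose associated element lies in $\BrCliff(G,\crp{E})$ (so that $\crp{F}(\theta) = \crp{E}$ as a $G$-field), I must produce another Clifford pair $(\theta_1,\kappa_1)$ in $\cyccl$ over the \emph{same} $G$ with $\crp{F}(\theta_1) = \crp{E}$ as $G$-fields and $\brcls{\theta_1}{\kappa_1}{\crp{F}} = \brcls{\theta}{\kappa}{\crp{F}}$.

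So fix $(\theta,\kappa)$ in $\macl$, with $A \nteq \oG$, $A \subseteq N$, and $\theta = \lambda^N$ for some $\lambda \in \Lin A$. The first step is a reduction to the case where $\lambda$ is also \emph{semi-invariant} in $\oG$ over $\crp{F}$ — i.e., to arrange condition \ref{i:cond_semiinv}. This is where I expect the main obstacle to lie: $\lambda$ is a constituent of $\theta_A$, and the set of its $\oG$-conjugates, together with the Galois orbit of $\lambda$, need not be such that a single Galois-stable orbit sum is $\oG$-invariant. The natural move is to pass to the stabilizer-type subgroup $\oH := \{g \in \oG : \exists\, \alpha \in \Gal(\crp{F}(\lambda)/\crp{F}),\ \lambda^{g\alpha} = \lambda\}$, apply Corollary~\ref{c:subextclifford} (with $K = N$, $\tau = \lambda$), and replace $(\theta,\kappa)$ by $(\phi, \kappa_{|\oH})$ where $\phi \in \Irr(L \mid \lambda)$, $L = \oH \cap N$, and $\theta = \phi^N$, $\crp{F}(\theta) = \crp{F}(\phi)$, $\brcls{\theta}{\kappa}{\crp{F}} = \brcls{\phi}{\kappa_{|\oH}}{\crp{F}}$. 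In this new pair, $\lambda$ is semi-invariant in $\oH$ over $\crp{F}$ essentially by construction of $\oH$; one must check $\lambda^L = \phi$ (so the new pair is still in $\macl$ with the same $A$) — this follows since $\phi$ lies over $\lambda$ and has the property that $\phi^N = \lambda^N$ is irreducible, forcing $\phi = \lambda^L$ by the usual Clifford-correspondence count, or by noting $L = \oH \cap N = N_{\lambda}$-related. The point is that $\oG/N \iso G$ is preserved ($\oH/L \iso \oG/N \iso G$ because $\oG = \oH N$ by Corollary~\ref{c:subextclifford}), so we genuinely stay over the same $G$.

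Once \ref{i:cond_abel}, \ref{i:cond_induc}, \ref{i:cond_semiinv} all hold, Lemma~\ref{l:galisoredundant} supplies \ref{i:cond_galiso} for free. It remains to upgrade to \ref{i:cond_cyclic}, i.e., to make $A$ cyclic and $\lambda$ faithful. Here the remark following Lemma~\ref{l:cliffpairkernel} does exactly the job: with the three conditions in force, $K := \Ker\lambda$ equals $\Ker\theta$ and is normal in $\oG$ (it is normal because $\lambda$ is semi-invariant, so its kernel is $\oG$-stable); then $A/K$ is cyclic (as $A/\Ker\lambda$ embeds in $\crp{E}^\times$), $\overline{\lambda} \in \Lin(A/K)$ is faithful, and $\overline{\lambda}^{N/K} = \overline{\theta}$. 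By Lemma~\ref{l:cliffpairkernel}, $\brcls{\overline{\theta}}{\overline{\kappa}}{\crp{F}} = \brcls{\theta}{\kappa}{\crp{F}}$, and $\crp{F}(\overline{\theta}) = \crp{F}(\theta) = \crp{E}$ as $G$-fields (factoring out $K$ changes neither the character values nor the $G$-action). The conditions \ref{i:cond_semiinv} and \ref{i:cond_galiso} descend to the quotient as well, since $\crp{F}(\overline{\lambda}) = \crp{F}(\lambda)$. Thus $(\overline{\theta}, \overline{\kappa})$ lies in $\cyccl$ and represents the same Brauer-Clifford element, completing the inclusion and hence the proof. The only genuinely delicate point, as noted, is verifying that the passage to $\oH$ in the first step really does deliver $\lambda^L = \phi$ and semi-invariance of $\lambda$ in $\oH$ simultaneously; everything after that is bookkeeping with Lemmas~\ref{l:cliffpairkernel} and~\ref{l:galisoredundant}.
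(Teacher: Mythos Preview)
Your proposal is correct and follows essentially the same route as the paper: pass to the semi-invariance subgroup $\oH=\oG_0$ of $\lambda$ via Corollary~\ref{c:subextclifford}, identify the resulting $\phi$ with $\lambda^{L}$ (the paper simply sets $\theta_0=\lambda^{N_0}$ directly, using the uniqueness clause of that corollary), and then factor out $\Ker\lambda$ using Lemma~\ref{l:cliffpairkernel}. Your worry about $\phi=\lambda^{L}$ is easily settled exactly as you sketch: $(\lambda^{L})^{N}=\lambda^{N}=\theta$ is irreducible, so $\lambda^{L}\in\Irr(L\mid\lambda)$ and uniqueness gives $\phi=\lambda^{L}$.
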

\begin{proof}
  By definition, we have
  $\SC_{\cyccl}(G,\crp{E})\subseteq \SC_{\macl}(G,\crp{E})$.
  To show the converse inclusion,
  we begin with a Clifford pair
  $(\theta,\kappa)\in \macl$
  that induces the given action on $\crp{E}=\crp{F}(\theta)$.
  Here $\kappa\colon \oG\to G$ and $\theta\in \Irr N$
  with $N=\ker\kappa$ as usual.
  Let $A\subseteq N$ be a normal subgroup of $\oG$
  and $\lambda\in \Lin A$ a linear character with
  $\theta=\lambda^N$.
  
  By assumption,  
  the character $\theta$ is semi-invariant in $\oG$.
  Set
  \begin{align*}
    \oG_0 &= \{ g\in \oG \mid
              \quad\text{there is
              $\alpha\in \Gal(\crp{F}(\lambda)/\crp{F})$
              such that $\lambda^{g\alpha}= \lambda$}\},
              \\
    N_0 &= N\cap \oG_0,\quad \kappa_0=\kappa_{|\oG_0},
           \quad\text{and}\quad 
           \theta_0= \lambda^{N_0}.
  \end{align*}
  Note that $\theta_0^N = \lambda^N = \theta$.
  By Corollary~\ref{c:subextclifford},
  we have that $\oG=\oG_0 N$, that
  $\crp{F}(\theta)=\crp{F}(\theta_0)$,
  and that 
  $\brcls{\theta}{\kappa}{\crp{F}} 
    = \brcls{\theta_0}{\kappa_0}{\crp{F}}$.
  Condition~\ref{i:cond_semiinv} (and thus~\ref{i:cond_galiso})
  holds for the Clifford pair $(\theta_0, \kappa_0)$.
  By Lemma~\ref{l:cliffpairkernel} and the remark following it, 
  we can factor out
  the kernel of $\lambda$ and we get a
  Clifford pair in $\cyccl$ that yields the same
  element of the Brauer-Clifford group as $(\theta,\kappa)$.  
\end{proof}
\begin{cor}\label{c:sccgroup}
  $\SC_{\cyccl}(G,\crp{E})$ is a subgroup of $\SC(G,\crp{E})$
   and $\BrCliff(G,\crp{E})$.
\end{cor}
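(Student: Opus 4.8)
The plan is to reduce everything to Proposition~\ref{p:cycltoabel} and exploit the fact, recalled above, that $\BrCliff(G,\crp{E})$ is an abelian torsion group. By Proposition~\ref{p:cycltoabel} we have $\SC_{\cyccl}(G,\crp{E})=\SC_{\macl}(G,\crp{E})$, so it suffices to prove that $\SC_{\macl}(G,\crp{E})$ is a subgroup of $\BrCliff(G,\crp{E})$; it is automatically contained in $\SC(G,\crp{E})$, since for $\brcls{\theta}{\kappa}{\crp{F}}$ to lie in $\BrCliff(G,\crp{E})$ means precisely that the Clifford pair $(\theta,\kappa)$ induces the given action of $G$ on $\crp{F}(\theta)=\crp{E}$. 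Since a non-empty multiplicatively closed subset of an abelian torsion group is a subgroup (if $x$ has order $n$, then $x^{-1}=x^{n-1}$ and $1=x^{n}$ again lie in the subset), I only need two things: that $\SC_{\macl}(G,\crp{E})$ is non-empty, and that it is closed under the multiplication of $\BrCliff(G,\crp{E})$.

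For non-emptiness I would exhibit an explicit member of $\cyccl$ realizing the prescribed action. Write $\crp{E}$ inside a cyclotomic extension $\crp{F}(\zeta)$ of $\crp{F}$, take a cyclic group $A=\erz{a}$ with faithful $\lambda\in\Lin A$, $\lambda(a)=\zeta$, put $\Delta=\Gal(\crp{F}(\zeta)/\crp{E})$ regarded as a subgroup of $\Aut(A)$ via the canonical embedding $\Gal(\crp{F}(\zeta)/\crp{F})\into\Aut(A)$, and set $N=A\rtimes\Delta$. Then $\theta:=\lambda^{N}\in\Irr N$ and $\crp{F}(\theta)=\crp{F}(\zeta)^{\Delta}=\crp{E}$, and $N/A\iso\Delta=\Gal(\crp{F}(\lambda)/\crp{F}(\theta))$. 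Finally one builds $\oG$ as a suitable pullback so that $\kappa\colon\oG\to G$ has kernel $N$ and the conjugation action of $\oG$ on $A$ induces, through $\lambda$, exactly the given $G$-action on $\crp{E}$; this is possible because $\Gal(\crp{F}(\zeta)/\crp{F})$ maps onto $\Aut(\crp{E}/\crp{F})$. The resulting pair lies in $\cyccl$, so $\SC_{\cyccl}(G,\crp{E})\neq\varnothing$.

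For closure under products, let $(\theta_1,\kappa_1),(\theta_2,\kappa_2)\in\macl$ both induce the given action of $G$ on $\crp{E}$, with $A_i\nteq\oG_i$, $A_i\leq N_i$ and $\lambda_i\in\Lin A_i$ such that $\theta_i=\lambda_i^{N_i}$. Form the fibre product $\oG_{12}=\oG_1\times_G\oG_2$, with kernel $N_{12}=N_1\times N_2$, projection $\kappa_{12}$, and character $\theta_1\times\theta_2$. As in~\cite{ladisch15b} (the construction used there to show that $\SC^{(\crp{F})}(G,\crp{E})$ is closed under products), this Clifford pair satisfies $\crp{F}(\theta_1\times\theta_2)=\crp{E}$ as $G$-field—here it is essential that the two pairs induce the \emph{same} $G$-action on $\crp{E}$—and $\brcls{\theta_1\times\theta_2}{\kappa_{12}}{\crp{F}}=\brcls{\theta_1}{\kappa_1}{\crp{F}}\cdot\brcls{\theta_2}{\kappa_2}{\crp{F}}$. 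Now $A_1\times A_2\nteq\oG_{12}$, $A_1\times A_2\leq N_{12}$, and $(\lambda_1\times\lambda_2)^{N_{12}}=\lambda_1^{N_1}\times\lambda_2^{N_2}=\theta_1\times\theta_2$, so the fibre-product pair again lies in $\macl$, and therefore the product of the two classes lies in $\SC_{\macl}(G,\crp{E})$.

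Combining these, $\SC_{\macl}(G,\crp{E})$ is a non-empty, multiplicatively closed subset of the abelian torsion group $\BrCliff(G,\crp{E})$, hence a subgroup, contained in $\SC(G,\crp{E})$; by Proposition~\ref{p:cycltoabel} it equals $\SC_{\cyccl}(G,\crp{E})$. The only step requiring genuine work is the third paragraph: recalling from~\cite{ladisch15b} the fibre-product of Clifford pairs and checking both that it computes the product in $\BrCliff(G,\crp{E})$ and that it preserves conditions~\ref{i:cond_abel} and~\ref{i:cond_induc}. Everything else is formal once Proposition~\ref{p:cycltoabel} and the torsion property of $\BrCliff(G,\crp{E})$ are in hand.
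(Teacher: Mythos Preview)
Your proposal is correct and follows essentially the same route as the paper: reduce via Proposition~\ref{p:cycltoabel} to $\SC_{\macl}(G,\crp{E})$, then check non-emptiness and closure under the fibre product $(\theta_1\times\theta_2,\kappa_1\times_G\kappa_2)$. The only notable difference is that the paper appeals to the criterion \cite[Theorem~5.7]{ladisch15b} and thus checks closure under complex conjugation $(\theta,\kappa)\mapsto(\cconj{\theta},\kappa)$ to obtain inverses, whereas you sidestep this by invoking the torsion property of $\BrCliff(G,\crp{E})$; both are equally easy here, and your explicit construction for non-emptiness is exactly what underlies the cited \cite[Corollary~5.2]{ladisch15b}.
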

\begin{proof}
  It suffices to show that
  $\SC_{\macl}(G,\crp{E})$ is a subgroup.
  By Theorem~5.7 from~\cite{ladisch15b}, 
  we have to show the following:
  \begin{enums}
    \item $\SC_{\macl}(G,\crp{E})\neq \emptyset$.
    \item \label{i:invclosed} When $(\theta,\kappa)\in \macl$,
          then $(\cconj{\theta},\kappa)\in \macl$
          (here, $\cconj{\theta}$ denotes the complex conjugate
          of $\theta$).
    \item \label{i:prodclosed} When $(\theta_1,\kappa_1)$ and 
          $(\theta_2,\kappa_2)\in \macl$,
          then $(\theta_1\times \theta_2,\kappa_1\times_G\kappa_2)
          \in \macl$.
  \end{enums}   
  That $\SC_{\macl}(G,\crp{E})\neq \emptyset$ 
  follows from Corollary~5.2 in \cite{ladisch15b}, 
  and \ref{i:invclosed} is clear.
  For \ref{i:prodclosed}, 
  assume $A_i\subseteq N_i$ is the abelian normal subgroup
  of $\oG_i$, 
  where 
  \[\begin{tikzcd}
         1 \rar & N_i \rar & \oG_i \rar{\kappa_i} & G \rar & 1
    \end{tikzcd}
  \]
  is the exact sequence belonging to $\kappa_i$,
  and let $\lambda_i\in \Lin A_i$ 
  with $\theta_i= \lambda_i^{N_i}$ for 
  $i=1$,~$2$.
  Recall that the Clifford pair 
  $(\theta_1\times \theta_2,\kappa_1\times_G\kappa_2)$
  is defined as follows:  
    Let 
    \[ \oG_1 \times_G \oG_2 = \{(g_1,g_2) \in \oG_1\times\oG_2
                                 \mid 
                                 g_1\kappa_1 = g_2\kappa_2
                              \}
    \]
    be the pullback and
    $\kappa_1\times_G \kappa_2\colon \oG_1 \times_G \oG_2 \to G$ 
    the canonical homomorphism mapping
    $(g_1,g_2)$ to $g_1\kappa_1 = g_2\kappa_2$.
    Then $\Ker(\kappa_1\times_G\kappa_2) = N_1\times N_2$,
    and $\theta_1\times\theta_2\in \Irr(N_1 \times N_2)$.
  It follows that
  $A_1\times A_2\nteq \oG_1 \times_G \oG_2$ 
  is an abelian normal subgroup and
  $\theta_1 \times \theta_2 =
   \lambda_1^{N_1}\times\lambda_2^{N_2}
   = (\lambda_1\times\lambda_2)^{N_1\times N_2}$.   
  This shows~\ref{i:prodclosed}.
\end{proof}

\section{Reduction to prime power groups}
\label{sec:red_pgrp}
\begin{lemma}\label{l:reducppart}
  Let $a\in \SC(G,\crp{E})$.
  Then $a\in \SC_{\cyccl}(G,\crp{E})$
  if and only if the $p$-parts $a_p$
  of $a$ 
  are in $\SC_{\cyccl}(G,\crp{E})$ for all primes $p$.
\end{lemma}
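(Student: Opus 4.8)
The plan is to deduce this from the fact that $\SC_{\cyccl}(G,\crp{E})$ is a \emph{subgroup} of an abelian torsion group, so that passing to $p$\nbd parts is just passing to powers. The only substantive inputs are Corollary~\ref{c:sccgroup}, which says $\SC_{\cyccl}(G,\crp{E})$ is a subgroup of $\BrCliff(G,\crp{E})$, and the fact recalled in Section~\ref{sec:review} that $\BrCliff(G,\crp{E})$ is abelian and torsion.

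Concretely, since $a\in \BrCliff(G,\crp{E})$ lies in a torsion group, it has finite order; write that order as $n=\prod_{p} p^{e_p}$, a product over the finitely many primes dividing $n$. For each such $p$, choose by the Chinese remainder theorem an integer $m_p$ with $m_p\equiv 1 \pmod{p^{e_p}}$ and $m_p\equiv 0 \pmod{q^{e_q}}$ for every prime $q\neq p$ dividing $n$. Then $a^{m_p}$ is exactly the $p$\nbd part $a_p$ of $a$ in the primary decomposition of the cyclic group $\langle a\rangle$, and $a=\prod_p a_p$ is a finite product.

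Now both implications are immediate. If $a\in \SC_{\cyccl}(G,\crp{E})$, then since $\SC_{\cyccl}(G,\crp{E})$ is a subgroup (Corollary~\ref{c:sccgroup}) we get $a_p=a^{m_p}\in \SC_{\cyccl}(G,\crp{E})$ for every prime $p$ (and trivially $a_p$ is the identity, hence in $\SC_{\cyccl}(G,\crp{E})$, for the remaining primes). Conversely, if every $a_p$ lies in $\SC_{\cyccl}(G,\crp{E})$, then $a=\prod_p a_p$ is a finite product of elements of the subgroup $\SC_{\cyccl}(G,\crp{E})$, hence $a\in \SC_{\cyccl}(G,\crp{E})$.

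I do not expect any real obstacle in this lemma itself; it is a formal reduction. The actual content — that the $p$\nbd parts do lie in $\SC_{\cyccl}(G,\crp{E})$, which is what this lemma lets us focus on — is the subject of the sections that follow, so the main work is deferred rather than encountered here.
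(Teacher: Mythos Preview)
Your proof is correct and follows essentially the same approach as the paper: both use that $\BrCliff(G,\crp{E})$ is torsion so that $a_p\in\langle a\rangle$, and then invoke Corollary~\ref{c:sccgroup} to conclude. You have simply spelled out the Chinese remainder step that the paper leaves implicit.
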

\begin{proof}
  Recall that $\BrCliff(G,\crp{E})$ is torsion.
  Thus $a= \prod_{p} a_p$
  is the product of its $p$-parts $a_p$, 
  and  $a_p\in \erz{a}$.
  The result follows since 
  $\SC(G,\crp{E})$ and $\SC_{\cyccl}(G,\crp{E})$
  are subgroups (Corollary~\ref{c:sccgroup}).
\end{proof}
For any subgroup $H\leq G$, there is a group homomorphism
\[ \Res_H^G\colon \BrCliff(G,\crp{E})\to \BrCliff(H,\crp{E})
\]
which is induced by viewing a $G$\nbd algebra as an 
$H$\nbd algebra.
This restriction homomorphism sends 
$\brcls{\theta}{\kappa}{\crp{F}}$ to
$\brcls{\theta}{\pi}{\crp{F}}$, 
where $\pi$ is the restriction of $\kappa$ to the preimage
$\kappa^{-1}(H)$ of $H$~\cite[Proposition~7.1]{ladisch15b}.

In the proof of the next result, we also need the corestriction
map
\[ \Cores_H^G \colon \BrCliff(H,\crp{E})\to \BrCliff(G,\crp{E})
\]
defined in~\cite{ladisch15a}. 
\begin{lemma}\label{l:reducpgroup}
  Let $a\in \SC(G,\crp{E})$ have
  $p$-power order for the prime $p$ and let $P\leq G$ be a
  Sylow $p$\nbd subgroup of $G$.
  Then $a\in \SC_{\cyccl}(G,\crp{E})$
  if and only if
  $\Res_P^G(a)\in \SC_{\cyccl}(P,\crp{E})$.
\end{lemma}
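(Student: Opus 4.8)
The plan is to use the transfer-theoretic relationship between $\Res_P^G$ and $\Cores_P^G$ for a Sylow $p$-subgroup, exactly as one does for the ordinary Brauer group or for group cohomology. First I would observe that the ``only if'' direction is the easy one: if $a = \brcls{\theta}{\kappa}{\crp{F}} \in \SC_{\cyccl}(G,\crp{E})$, then $(\theta,\kappa)\in \cyccl$, and restricting $\kappa$ to $\kappa^{-1}(P)$ gives a Clifford pair $(\theta,\pi)$ whose underlying group data still has the same cyclic normal subgroup $A$ (now normal in $\kappa^{-1}(P)$), the same linear $\lambda$ with $\theta = \lambda^N = \lambda^{N\cap\kappa^{-1}(P)}$ — wait, one must be a little careful here, since $N\subseteq \kappa^{-1}(P)$ because $N = \Ker\kappa$ and $P\leq G$; so in fact $N$ is unchanged and $\theta=\lambda^N$ still holds, $\lambda$ is still semi-invariant (the inertia conditions only shrink), and the Galois isomorphism $N/A\iso\Gal(\crp{F}(\lambda)/\crp{F}(\theta))$ is untouched. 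Hence $(\theta,\pi)\in\cyccl$ and $\Res_P^G(a) = \brcls{\theta}{\pi}{\crp{F}}\in \SC_{\cyccl}(P,\crp{E})$ by the formula for $\Res$ quoted from \cite[Proposition~7.1]{ladisch15b}.

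For the ``if'' direction, the key input is the standard identity $\Cores_P^G\circ\Res_P^G$ acts on the $p$-primary part of an abelian torsion group as multiplication by $[G:P]$, which is prime to $p$, hence is an automorphism of $\erz{a}$ when $a$ has $p$-power order. So assuming $\Res_P^G(a)\in\SC_{\cyccl}(P,\crp{E})$, I would apply $\Cores_P^G$ to conclude $\Cores_P^G(\Res_P^G(a))\in\SC_{\cyccl}(G,\crp{E})$ — provided corestriction maps $\SC_{\cyccl}(P,\crp{E})$ into $\SC_{\cyccl}(G,\crp{E})$ — and then, since $\Cores_P^G\Res_P^G(a) = a^{[G:P]}$ and $[G:P]$ is invertible modulo the order of $a$, recover $a$ itself as a power of $\Cores_P^G\Res_P^G(a)$, which lies in $\SC_{\cyccl}(G,\crp{E})$ because the latter is a subgroup (Corollary~\ref{c:sccgroup}).

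The step I expect to be the main obstacle, and the one deserving the most care, is showing that $\Cores_P^G$ sends $\SC_{\cyccl}(P,\crp{E})$ into $\SC_{\cyccl}(G,\crp{E})$ — i.e. that the class $\cyccl$ (or at least $\macl$, which by Proposition~\ref{p:cycltoabel} gives the same subgroup) is closed under the corestriction construction of \cite{ladisch15a}. Concretely, if $(\psi,\pi)\in\cyccl$ is a Clifford pair over $P$ with abelian normal $A\leq M = \Ker\pi$ and $\psi = \mu^M$, then I need the corestricted Clifford pair over $G$ to again be induced from a linear character of an abelian normal subgroup. The natural guess is that $\Cores_P^G$ at the level of Clifford pairs is built from a wreath-product/induced-group construction $\widehat{G}^{\ast} = \{$ functions on a transversal $\}$ with an $A$-part that is a direct power of $A$ (hence still abelian and normal), and $\mu$ corestricts to a product-type linear character whose induction up the corresponding $N$-part is the corestricted $\theta$; if that is what the construction in \cite{ladisch15a} does, closure is essentially formal. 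I would check the explicit description of $\Cores$ there, verify the abelian-normal-subgroup and linear-induction conditions transfer, and for safety work with $\macl$ rather than $\cyccl$ (invoking Proposition~\ref{p:cycltoabel} at the end), since $\macl$ has the cleanest closure properties and we already verified in Corollary~\ref{c:sccgroup} that it behaves well under products.
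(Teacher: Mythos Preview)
Your proposal is correct and follows essentially the same approach as the paper: use $\Cores_P^G\circ\Res_P^G = (\cdot)^{[G:P]}$, note that $[G:P]$ is prime to $p$ so $a\in\erz{a^{[G:P]}}$, and invoke Corollary~\ref{c:sccgroup} to finish. The one place you anticipate difficulty---showing $\Cores_P^G$ carries $\SC_{\macl}(P,\crp{E})$ into $\SC_{\macl}(G,\crp{E})$---is exactly what the paper does, and it is already available as \cite[Corollary~9.2]{ladisch15b}, so you need not rederive the wreath-product description yourself; your intuition about why it works is correct, but the citation suffices.
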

\begin{proof}
  The ``only'' if part is clear and does not depend on
  $a\in \SC(G,\crp{E})$ having
  $p$\nbd power order.
  
  Now assume 
  $\Res_P^G(a)\in \SC_{\cyccl}(P,\crp{E})$.
  Apply the corestriction map
  \[ \Cores_P^G \colon \BrCliff(P,\crp{E}) \to \BrCliff(G,\crp{E}).
  \]
  It follows from \cite[Corollary~9.2]{ladisch15b} 
  that $\Cores_P^G$ maps 
  $\SC_{\macl}(P,\crp{E})$ into $\SC_{\macl}(G,\crp{E})$.
  Since $\SC_{\macl}(G,\crp{E})= \SC_{\cyccl}(G,\crp{E})$
  by Proposition~\ref{p:cycltoabel},
  we have
  $\Cores_P^G(\Res_P^G(a)) \in \SC_{\cyccl}(G,\crp{E})$.  
  By \cite[Theorem~6.3]{ladisch15a}, 
  we get
  $\Cores_P^G(\Res_P^G(a))= a^{\abs{G:P}}$.
  Since $a$ has $p$\nbd power order, it follows that
  $a\in \erz{a^{\abs{G:P}}}$.
  Thus $a\in \SC_{\cyccl}(G,\crp{E})$ as claimed.
\end{proof}
By the last two results, to show that an arbitrary element
$a\in \SC(G,\crp{E})$ 
is in fact contained in $\SC_{\cyccl}(G,\crp{E})$,
we can assume that $a$ has $p$\nbd power order and 
that $G$ is a $p$-group.
Note that it was essential in the above proofs that
$\SC(G,\crp{E})$ and $\SC_{\cyccl}(G,\crp{E})$ are groups.
\begin{remark}
  By similar arguments, one can show:
  An element $a\in \BrCliff(G,\crp{E})$ is in
  $\SC_{\cyccl}(G,\crp{E})$ if and only if
  $\Res_P^G(a)\in \SC_{\cyccl}(P,\crp{E})$
  for all Sylow subgroups $P$ of $G$.
  Note, however, that even when $P$ is a $p$-group,
  the exponent of
  $\BrCliff(P,\crp{E})$ or $\SC(P,\crp{E})$
  is in general not a $p$\nbd power.
  For example, $\BrCliff(1,\crp{E})=\Br(\crp{E})$,
  and $\SC(1,\crp{E})$ is in general not the trivial group. 
\end{remark}

\section{Reduction to a larger field}
In this section, we show that when $G$ is a $p$\nbd group
and $\brcls{\theta}{\kappa}{\crp{F}}$ has 
$p$\nbd power-order, then we can 
replace the fields $\crp{E}$ and $\crp{F}$ by certain larger fields
to prove the main theorem (Theorem~\ref{main}).

First, we assume the following situation:
Let $\crp{E}$ be a field on which a group $G$ acts, and let
$\crp{F}$ be a field contained in the fixed field
$\crp{E}^G$.
Suppose that there are two other
fields $\crp{K}\geq \crp{L}\geq \crp{F}$
such that $\crp{K}= \crp{E}\crp{L}$
and $\crp{E}\cap \crp{L}= \crp{F}$.
Assume that 
$\crp{L}/\crp{F}$ is Galois and let
$\Delta= \Gal(\crp{L}/\crp{F})\iso 
      \Gal(\crp{K}/\crp{E})$.
The situation is summarized in the following picture:
\begin{center}
  \begin{tikzpicture}[on grid=true, inner sep=2pt]
             \node (F) {$\crp{F}$};
             \node (L) [above right=2.1 of F] {$\crp{L}$};
             \node (E) [above left=1.4 of F] {$\crp{E}$};
             \node (K) [above left=1.4 of L] {$\crp{K}$};
             \draw (F) -- (L) node[auto=right, midway] {$\Delta$};
             \draw (L) -- (K) -- (E) -- (F);
  \end{tikzpicture}
\end{center}
In this situation, 
$G\times \Delta$ acts on 
$\crp{K}\iso \crp{E}\tensor_{\crp{F}} \crp{L}$.
If $S$ is a central simple $G$\nbd algebra over $\crp{E}$, 
then $S\tensor_{\crp{F}}\crp{L} \iso S\tensor_{\crp{E}}\crp{K}$
is a central simple 
$(G\times\nobreak \Delta)$\nbd algebra over $\crp{K}$.
This defines a group homomorphism
$\BrCliff(G,\crp{E})\to \BrCliff(G\times\nobreak\Delta,\crp{K})$.
In fact, this group homomorphism is the composition of
\[ 
  \begin{tikzcd}
    \BrCliff(G,\crp{E})\rar{\Inf}
      & \BrCliff(G\times \Delta,\crp{E}) \rar
      & \BrCliff(G\times \Delta, \crp{K}),
  \end{tikzcd}
\]
where the first map is the inflation map induced by the 
epimorphism $G\times\Delta\to G$, and the second
map is induced by scalar extension from
$\crp{E}$ to $\crp{K}$.
Our first goal is to show that the above group homomorphism
is actually an isomorphism:
\begin{prop}\label{p:fieldred}
  The maps 
  \[ S\mapsto S\tensor_{\crp{E}}\crp{K}
     \quad 
     \text{and}
     \quad
     T \mapsto T^{\Delta} := \C_T(\Delta)
  \]
  define inverse maps between
  the isomorphism classes of central simple
  $G$\nbd algebras over $\crp{E}$ and central simple
  $(G\times\Delta)$\nbd algebras over $\crp{K}$.
  Moreover, they induce mutually inverse
  isomorphisms
  \[ \BrCliff(G,\crp{E})
      \iso \BrCliff(G\times \Delta, \crp{K}).
  \]
\end{prop}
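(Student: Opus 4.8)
The plan is to prove Proposition~\ref{p:fieldred} by Galois descent along the extension $\crp{K}/\crp{E}$, whose Galois group is $\Delta$ acting on $\crp{K}$ and fixing $\crp{E}$, together with the commuting $G$\nbd action on $\crp{K}$ (which acts on the factor $\crp{E}$ and fixes $\crp{L}$). The hypotheses $\crp{K}=\crp{E}\crp{L}$, $\crp{E}\cap\crp{L}=\crp{F}$ and $\crp{L}/\crp{F}$ Galois guarantee that $\crp{E}$ and $\crp{L}$ are linearly disjoint over $\crp{F}$, so that $\crp{K}\iso\crp{E}\tensor_{\crp{F}}\crp{L}$ as a $(G\times\Delta)$\nbd field and $\crp{K}^{\Delta}=\crp{E}$. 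The descent input I will use is standard faithfully flat descent along $\crp{K}/\crp{E}$ (which rests on $\crp{K}\tensor_{\crp{E}}\crp{K}\iso\prod_{\Delta}\crp{K}$): for any $\crp{K}$\nbd vector space or $\crp{K}$\nbd algebra $T$ carrying a $\Delta$\nbd action that is semilinear over the action of $\Delta$ on $\crp{K}$, the canonical map $T^{\Delta}\tensor_{\crp{E}}\crp{K}\to T$ is an isomorphism, $T^{\Delta}=\C_T(\Delta)$ is finite\nbd dimensional over $\crp{E}$ when $T$ is over $\crp{K}$, and $T$ is central simple over $\crp{K}$ if and only if $T^{\Delta}$ is central simple over $\crp{E}$.

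First I would check that both maps are well defined on algebras with group action. If $S$ is a central simple $G$\nbd algebra over $\crp{E}$, then $S\tensor_{\crp{E}}\crp{K}$ is central simple over $\crp{K}$; it carries the $G$\nbd action from $S$ and from $\crp{K}$ and the $\Delta$\nbd action from $\crp{K}$ alone (trivial on $S$), and these commute, so it is a central simple $(G\times\Delta)$\nbd algebra over $\crp{K}$. Conversely, for a central simple $(G\times\Delta)$\nbd algebra $T$ over $\crp{K}$, the descent input makes $T^{\Delta}$ central simple over $\crp{K}^{\Delta}=\crp{E}$; the $G$\nbd action on $T$ commutes with $\Delta$, hence restricts to $T^{\Delta}$ compatibly with the $G$\nbd action on $\crp{E}$, so $T^{\Delta}$ is a central simple $G$\nbd algebra over $\crp{E}$. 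These two maps are mutually inverse on isomorphism classes: on one side $(S\tensor_{\crp{E}}\crp{K})^{\Delta}=S\tensor_{\crp{E}}\crp{K}^{\Delta}=S$ (because $\Delta$ acts trivially on $S$, $S$ is free over $\crp{E}$, and $\crp{K}^{\Delta}=\crp{E}$), an identification that is $G$\nbd equivariant over $\crp{E}$; on the other side the canonical map $T^{\Delta}\tensor_{\crp{E}}\crp{K}\to T$ is an isomorphism by descent, and it is $G$\nbd equivariant (being built from the inclusion $T^{\Delta}\into T$ and multiplication in $T$) and $\Delta$\nbd equivariant (as $\Delta$ acts trivially on $T^{\Delta}$ and through $\crp{K}$ on both sides), hence an isomorphism of $(G\times\Delta)$\nbd algebras over $\crp{K}$.

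It remains to descend to the Brauer--Clifford groups. The map $S\mapsto S\tensor_{\crp{E}}\crp{K}$ is already known to be a group homomorphism $\BrCliff(G,\crp{E})\to\BrCliff(G\times\Delta,\crp{K})$, being the composite of $\Inf$ with scalar extension. It is surjective, since by the previous step every central simple $(G\times\Delta)$\nbd algebra $T$ over $\crp{K}$ is isomorphic to $T^{\Delta}\tensor_{\crp{E}}\crp{K}$. For injectivity it suffices to see that $T\mapsto T^{\Delta}$ carries the equivalence relation defining $\BrCliff(G\times\Delta,\crp{K})$ to the one defining $\BrCliff(G,\crp{E})$, and in particular sends trivial algebras to trivial algebras: for a right $\crp{K}(G\times\Delta)$\nbd module $V$, descent for vector spaces (semilinear Hilbert~90) gives a $G$\nbd equivariant isomorphism $V\iso V^{\Delta}\tensor_{\crp{E}}\crp{K}$ with $V^{\Delta}$ a right $\crp{E}G$\nbd module, whence $\enmo_{\crp{K}}(V)^{\Delta}\iso\enmo_{\crp{E}}(V^{\Delta})$ as $G$\nbd algebras over $\crp{E}$; combining this with $(S\tensor_{\crp{E}}\enmo_{\crp{K}}(V))^{\Delta}=S\tensor_{\crp{E}}\enmo_{\crp{K}}(V)^{\Delta}$ and $S\tensor_{\crp{E}}\crp{K}\tensor_{\crp{K}}\enmo_{\crp{K}}(V)=S\tensor_{\crp{E}}\enmo_{\crp{K}}(V)$ turns an equivalence $S\tensor_{\crp{E}}\crp{K}\tensor_{\crp{K}}\enmo_{\crp{K}}(V)\iso\enmo_{\crp{K}}(W)$ into $S\tensor_{\crp{E}}\enmo_{\crp{E}}(V^{\Delta})\iso\enmo_{\crp{E}}(W^{\Delta})$, so $[S]=1$. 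Thus $S\mapsto S\tensor_{\crp{E}}\crp{K}$ is a bijective group homomorphism, hence an isomorphism with inverse $T\mapsto T^{\Delta}$.

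The step I expect to be the main obstacle is this last one: keeping the two commuting group actions bookkept cleanly through the descent, and especially checking that ``trivial'' is preserved in \emph{both} directions, which needs descent for modules (semilinear Hilbert~90), not merely descent for algebras, in order for the bijection on isomorphism classes to pass to the equivalence classes that define the Brauer--Clifford groups.
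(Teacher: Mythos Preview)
Your proposal is correct and follows essentially the same approach as the paper: both arguments rest on Galois descent along $\crp{K}/\crp{E}$ (the paper proves the needed descent statements $W\iso W^{\Delta}\tensor_{\crp{E}}\crp{K}$ and $(W_1\tensor_{\crp{K}}W_2)^{\Delta}\iso W_1^{\Delta}\tensor_{\crp{E}}W_2^{\Delta}$ explicitly as separate lemmas via a matrix-unit construction in $\crp{K}\Delta$, whereas you invoke them as standard faithfully flat descent / semilinear Hilbert~90), then use descent for modules to show that trivial $(G\times\Delta)$\nbd algebras $\enmo_{\crp{K}}(V)$ descend to trivial $G$\nbd algebras $\enmo_{\crp{E}}(V^{\Delta})$, so that the bijection on isomorphism classes passes to equivalence classes. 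The only structural difference is that the paper verifies equivalence-preservation symmetrically in both directions, while you observe that one direction is already a group homomorphism and check only injectivity; this is a harmless shortcut.
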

It is clear that $S\iso (S \tensor_{\crp{E}} \crp{K})^{\Delta}$,
and it is a result of Hochschild~\cite[Lemma~1.2]{hochs50} that
$T\iso T^{\Delta}\tensor_{\crp{E}}\crp{K}$. 
But to see that the inverse map sending $T$ to $T^{\Delta}$
respects equivalence classes, we need some more general arguments,
and it will be more convenient for us to reprove
Hochschild's result.

Let $\crp{K}\Delta$ denote the skew group ring with respect
to the action of $\Delta$ on $\crp{K}$
(see Section~\ref{sec:review}).
The ring $\crp{K}\Delta$ acts on $\crp{K}$ from the right by
\[x\circ \sum_{\sigma\in \Delta} \sigma c_{\sigma}
   = \sum_{\sigma} x^{\sigma}c_{\sigma}.
\]
This makes $\crp{K}$ into a right $\crp{K}\Delta$\nbd module.
So if $V$ is a vector space over $\crp{E}$, 
then $V\tensor_{\crp{E}}\crp{K}$ is a right $\crp{K}\Delta$\nbd module.
For a right $\crp{K}\Delta$\nbd module $W$,
we still write 
$ W^{\Delta} 
  = \{w\in W 
      \mid 
      w\sigma = w 
      \text{ for all }
      \sigma\in \Delta
    \}$.

The next lemma basically follows from the 
fact that 
$\crp{K}\Delta\iso \mat_{\abs{\Delta}}(\crp{E})$
which is well known from Galois cohomology.
\begin{lemma}\label{l:fieldcatequiv}
  For every $\crp{E}$\nbd vector space $V$ and every
  right $\crp{K}\Delta$\nbd module $W$, we have  
  \[ V \iso (V\tensor_{\crp{E}}\crp{K})^{\Delta}
       \quad 
       \text{and}
       \quad
       W \iso  W^{\Delta} \tensor_{\crp{E}}\crp{K}
  \]
  naturally, and 
  $\enmo_{\crp{K}\Delta}(W) \iso \enmo_{\crp{E}}(W^{\Delta})$. 
  (In fact, we have a category equivalence between
  the category of vector spaces over $\crp{E}$
  and the category of modules over $\crp{K}\Delta$.)
\end{lemma}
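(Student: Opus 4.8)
The plan is to derive everything from the ring isomorphism $\crp{K}\Delta\iso\mat_{\abs{\Delta}}(\crp{E})$, together with elementary Morita theory. First I would make this isomorphism explicit: the right action of $\crp{K}\Delta$ on $\crp{K}$ is a ring homomorphism $\crp{K}\Delta\to\enmo_{\crp{E}}(\crp{K})$ (with the paper's convention that endomorphisms act on the right, this really is a homomorphism, not an anti\nbd homomorphism). It is injective by the linear independence of the distinct automorphisms in $\Delta$ over $\crp{K}$, and surjective because both sides have $\crp{E}$\nbd dimension $\abs{\Delta}^2=[\crp{K}:\crp{E}]^2$. Hence $\crp{K}\Delta$ is a simple Artinian $\crp{E}$\nbd algebra, the nonzero module $\crp{K}$ is its simple right module (unique up to isomorphism), and $\enmo_{\crp{K}\Delta}(\crp{K})$ is the centralizer of $\enmo_{\crp{E}}(\crp{K})$ inside itself, namely $\crp{E}$ acting by scalars. (Recall that $\crp{K}^{\Delta}=\crp{E}$ since $\Delta=\Gal(\crp{K}/\crp{E})$.)

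Next, $\crp{K}$ is an $(\crp{E},\crp{K}\Delta)$\nbd bimodule: $\crp{E}$ acts on the left by multiplication, $\crp{K}\Delta$ on the right as above, and the two actions commute because $e^{\sigma}=e$ for $e\in\crp{E}$ and $\sigma\in\Delta$. Since $\crp{K}$ is a progenerator of the category of right $\crp{K}\Delta$\nbd modules with endomorphism ring $\crp{E}$, Morita theory gives that $V\mapsto V\tensor_{\crp{E}}\crp{K}$ and $W\mapsto\operatorname{Hom}_{\crp{K}\Delta}(\crp{K},W)$ are mutually inverse equivalences between $\crp{E}$\nbd vector spaces and right $\crp{K}\Delta$\nbd modules; in particular the unit and counit of the adjunction are natural isomorphisms, and each functor induces ring isomorphisms on endomorphism rings.

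It then remains to identify $\operatorname{Hom}_{\crp{K}\Delta}(\crp{K},W)$ with $W^{\Delta}$ compatibly with these natural maps. Evaluation at $1\in\crp{K}$ gives a map $\operatorname{Hom}_{\crp{K}\Delta}(\crp{K},W)\to W$, natural in $W$; it is injective because $1$ generates $\crp{K}$ as a $\crp{K}\Delta$\nbd module, its image lies in $W^{\Delta}$ because $f(1)=f(1\cdot\sigma)=f(1)\sigma$ for every $\sigma\in\Delta$, and it is onto $W^{\Delta}$ because for $w\in W^{\Delta}$ the assignment $x\mapsto wx$ is a $\crp{K}\Delta$\nbd homomorphism $\crp{K}\to W$ (the compatibility with $\sigma$ uses the relation $x\sigma=\sigma x^{\sigma}$ in $\crp{K}\Delta$ and $w\sigma=w$). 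Transporting the unit and counit of the Morita adjunction through this identification yields exactly the natural maps $V\to(V\tensor_{\crp{E}}\crp{K})^{\Delta}$, $v\mapsto v\tensor 1$, and $W^{\Delta}\tensor_{\crp{E}}\crp{K}\to W$, $w\tensor k\mapsto wk$, so these are isomorphisms, and the endomorphism\nbd ring statement becomes $\enmo_{\crp{K}\Delta}(W)\iso\enmo_{\crp{E}}(W^{\Delta})$. The only point requiring care is keeping the left/right conventions straight in the bimodule structure; the one genuinely substantive input, $\crp{K}\Delta\iso\mat_{\abs{\Delta}}(\crp{E})$, is standard (and, as the paper notes, well known from Galois cohomology), so I do not expect a real obstacle. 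If one prefers to avoid invoking Morita theory as a black box, one can instead write an arbitrary $W$ as a direct sum of copies of the simple module $\crp{K}$ and verify both displayed isomorphisms summand by summand, using $\crp{K}^{\Delta}=\crp{E}$.
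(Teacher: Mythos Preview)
Your argument is correct. Both you and the paper rest on the same substantive fact, the isomorphism $\crp{K}\Delta\iso\mat_{\abs{\Delta}}(\crp{E})$, but you proceed differently from there. The paper reproves this isomorphism constructively: it writes down an explicit complete set of matrix units $E_{ij}=a_i\big(\sum_{\sigma}\sigma\big)b_j$ in $\crp{K}\Delta$ using a basis $b_1,\dotsc,b_n$ of $\crp{K}$ over $\crp{E}$ and its trace-dual basis $a_1,\dotsc,a_n$, and then uses these matrix units to exhibit by hand the inverse of the natural map $W^{\Delta}\tensor_{\crp{E}}\crp{K}\to W$. You instead establish the ring isomorphism by the independence-of-automorphisms plus dimension argument and then invoke Morita theory, identifying $\operatorname{Hom}_{\crp{K}\Delta}(\crp{K},W)$ with $W^{\Delta}$ via evaluation at $1$. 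Your route is cleaner and more conceptual, and makes the naturality and the category equivalence transparent; the paper's route is more elementary and self-contained, yielding an explicit formula for the inverse isomorphism without citing Morita theory as a black box. Neither approach requires anything the other does not already implicitly contain.
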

\begin{proof}
  It is clear that $(V\tensor_{\crp{E}}\crp{K})^{\Delta}\iso V$
  naturally for any $\crp{E}$\nbd vector space $V$.
  
  Conversely, let $W$ be a $\crp{K}\Delta$\nbd module.
  We have to show that $w\tensor k\mapsto wk$
  is an isomorphism $W^{\Delta}\tensor_{\crp{E}}\crp{K}\iso W$.
  
  First, we observe the following identity in $\crp{K}\Delta$:
  For any $a\in \crp{K}$, we have
  \[ \big(\sum_{\sigma\in \Delta}\sigma \big)
     \,  a  \,
     \big(\sum_{\tau\in \Delta}\tau \big)
     = \sum_{\sigma, \tau\in \Delta}
       \sigma \tau a^{\tau}
     = \big( \sum_{\sigma\in \Delta} \sigma \big)
       \Tr_{\crp{E}}^{\crp{K}}(a).
  \]
  
  Let $b_1$, $\dotsc$, $b_n$ be a basis of $\crp{K}$
  over $\crp{E}$ 
  and let $a_1$, $\dotsc$, $a_n$ be the dual basis with respect to 
  the form $(x,y)\mapsto \Tr_{\crp{E}}^{\crp{K}}(xy)$,
  so that 
  $\Tr_{\crp{E}}^{\crp{K}}(b_ia_j) = \delta_{ij}$.
  Then for all $k\in \crp{K}$ we have
  \[ k = \sum_{i=1}^n \Tr_{\crp{E}}^{\crp{K}}(ka_i)b_i
       = \sum_{i=1}^n \Tr_{\crp{E}}^{\crp{K}}(kb_i)a_i.
  \]
  Set 
  \[ E_{ij} := a_i \big(\sum_{\sigma\in \Delta} \sigma \big) b_j 
     \in \crp{K}\Delta.
  \]
  It follows that 
  \[ E_{ij}E_{rs} = a_i \big(\sum_{\sigma\in \Delta} \sigma \big)
                    \Tr_{\crp{E}}^{\crp{K}}(b_ja_r) \, b_s
                  = \delta_{jr} E_{is},                  
  \]
  where the first equality follows from the identity in the last 
  paragraph.
  In particular, the $E_{ij}$'s are linearly independent 
  over $\crp{E}$
  and so, by counting dimensions, form a basis of $\crp{K}\Delta$.
  Since $(\sum_{i}E_{ii} ) E_{rs} = E_{rs}$,
  it follows that $\sum_i E_{ii} = 1$.
  We have now proved that the $E_{ij}$'s form a 
   \emph{complete set of matrix units}.
  Using this,
  it is routine to verify that
  \[ W \ni w \mapsto 
     \sum_{i=1}^n w a_i \big(\sum_{\sigma\in \Delta} \sigma \big)
     \tensor b_i \in W^{\Delta} \tensor_{\crp{E}} \crp{K}
  \]
  is the inverse of the natural map
  $W^{\Delta}\tensor_{\crp{E}}\crp{K} \to W$
  sending $w_0\tensor k$ to $wk$.
  
  The isomorphism 
  $\enmo_{\crp{K}\Delta}(W) \iso \enmo_{\crp{E}}(W^{\Delta})$
  follows from
  $W\iso W^{\Delta}\tensor_{\crp{E}} \crp{K}$.
\end{proof}

\begin{lemma}\label{l:fieldcentprod}
  Let $W_1$ and $W_2$ be two $\crp{K}\Delta$\nbd modules.
  Then
  $(W_1\tensor_{\crp{K}}W_2)^{\Delta}
    \iso 
    W_1^{\Delta}\tensor_{\crp{E}} W_2^{\Delta}$.
\end{lemma}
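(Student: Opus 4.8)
\textbf{Proof proposal for Lemma~\ref{l:fieldcentprod}.}
The plan is to identify the $\Delta$-fixed points of the tensor product $W_1 \tensor_{\crp{K}} W_2$ using the category equivalence established in Lemma~\ref{l:fieldcatequiv}. First I would recall that every $\crp{K}\Delta$-module $W$ is of the form $W^{\Delta}\tensor_{\crp{E}}\crp{K}$, with $\Delta$ acting only on the right tensor factor (through its action on $\crp{K}$, since $\crp{K}\iso\crp{E}\tensor_{\crp{F}}\crp{L}$ and $\Delta$ fixes $\crp{E}$). So write $V_i := W_i^{\Delta}$, an $\crp{E}$-vector space, so that $W_i \iso V_i \tensor_{\crp{E}} \crp{K}$ as $\crp{K}\Delta$-modules for $i=1,2$.

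Next I would compute the tensor product over $\crp{K}$ directly:
\[
  W_1 \tensor_{\crp{K}} W_2
  \iso (V_1 \tensor_{\crp{E}} \crp{K}) \tensor_{\crp{K}} (V_2 \tensor_{\crp{E}} \crp{K})
  \iso (V_1 \tensor_{\crp{E}} V_2) \tensor_{\crp{E}} \crp{K},
\]
where the associativity/cancellation of the tensor products is routine, and one checks that under this identification the right $\crp{K}\Delta$-module structure is again the ``extension of scalars'' structure on the $\crp{E}$-vector space $V_1 \tensor_{\crp{E}} V_2$, i.e.\ $\Delta$ acts only on the $\crp{K}$-factor. Applying the first isomorphism of Lemma~\ref{l:fieldcatequiv} to the $\crp{E}$-vector space $V_1 \tensor_{\crp{E}} V_2$ then gives
\[
  (W_1 \tensor_{\crp{K}} W_2)^{\Delta}
  \iso \big( (V_1 \tensor_{\crp{E}} V_2) \tensor_{\crp{E}} \crp{K} \big)^{\Delta}
  \iso V_1 \tensor_{\crp{E}} V_2
  = W_1^{\Delta} \tensor_{\crp{E}} W_2^{\Delta},
\]
which is the claim. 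Naturality is inherited from the naturality statements in Lemma~\ref{l:fieldcatequiv}.

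The only point requiring care — and the main obstacle, such as it is — is verifying that the diagonal $\Delta$-action on $W_1 \tensor_{\crp{K}} W_2$ corresponds, under the identification with $(V_1\tensor_{\crp{E}}V_2)\tensor_{\crp{E}}\crp{K}$, to the action on the $\crp{K}$-factor alone: one must check that for $v_i\in V_i\subseteq W_i$ and $k_1,k_2\in\crp{K}$ the element $(v_1 k_1)\tensor(v_2 k_2) = (v_1\tensor v_2)(k_1 k_2)$ is sent to $(v_1\tensor v_2)\tensor k_1k_2$, and that $\sigma\in\Delta$ fixes $v_1\tensor v_2$ while sending $k_1k_2$ to $(k_1k_2)^{\sigma}$; this is a direct computation using that each $v_i$ is $\Delta$-fixed and that $\sigma$ acts as a ring automorphism of $\crp{K}$. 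Everything else is formal manipulation of tensor products plus an appeal to the already-proved Lemma~\ref{l:fieldcatequiv}.
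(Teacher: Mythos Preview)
Your argument is correct. Both your proof and the paper's rely entirely on Lemma~\ref{l:fieldcatequiv}, but the mechanics differ slightly: the paper simply observes that the obvious map $W_1^{\Delta}\tensor_{\crp{E}} W_2^{\Delta}\to (W_1\tensor_{\crp{K}}W_2)^{\Delta}$ is a natural injection, and then notes that by Lemma~\ref{l:fieldcatequiv} both sides have $\crp{E}$\nbd dimension $(\dim_{\crp{K}}W_1)(\dim_{\crp{K}}W_2)$, so the injection is an isomorphism. You instead push the category equivalence through the tensor product explicitly, identifying $W_1\tensor_{\crp{K}}W_2$ with $(V_1\tensor_{\crp{E}}V_2)\tensor_{\crp{E}}\crp{K}$ as a $\crp{K}\Delta$\nbd module and then reading off the fixed points. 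The paper's dimension-count is shorter and avoids the bookkeeping about the diagonal $\Delta$\nbd action; your route is more constructive and makes the isomorphism (and its naturality) completely explicit. Either way the content is the same.
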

\begin{proof}
  We have a natural injection of
  $W_1^{\Delta}\tensor_{\crp{E}} W_2^{\Delta}$
  into $(W_1\tensor_{\crp{K}}W_2)^{\Delta}$.
  It follows from Lemma~\ref{l:fieldcatequiv}
  that both spaces have dimension
  $(\dim_{\crp{K}}W_1)(\dim_{\crp{K}}W_2)$ over $\crp{E}$.
  Thus the injection is an isomorphism.
\end{proof}
\begin{proof}[Proof of Proposition~\ref{p:fieldred}]
  A $(G\times \Delta)$\nbd algebra over $\crp{K}$ can be viewed as a
  $\crp{K}\Delta$\nbd module.
  It is easy to check that the isomorphisms of 
  Lemma~\ref{l:fieldcatequiv}
  are isomorphisms of 
  $(G\times\Delta)$\nbd algebras.
  
  If $S_1$ and $S_2$ are equivalent, then 
  $S_1\tensor_{\crp{E}}\crp{K}$ and
  $S_2\tensor_{\crp{E}}\crp{K}$ are equivalent, 
  as explained at the beginning of this section.
  
  Now assume that $[T_1]=[T_2]$ in $\BrCliff(G\times\Delta,\crp{K})$.
  To finish the proof, we need to show that
  $T_1^{\Delta}$ and $T_2^{\Delta}$ are equivalent.
  By assumption, there are $\crp{K}[G\times\Delta]$\nbd modules 
  $P_1$ and $P_2$ such that
  \[ T_1\tensor_{\crp{K}}\enmo_{\crp{K}}(P_1)
     \iso
     T_2 \tensor_{\crp{K}}\enmo_{\crp{K}}(P_2).
  \]
  Taking centralizers of $\Delta$ and using Lemma~\ref{l:fieldcentprod}, 
  it follows that
  \[ T_1^{\Delta} \tensor_{\crp{E}}\enmo_{\crp{K}\Delta}P_1
     \iso 
     T_2^{\Delta} \tensor_{\crp{E}}\enmo_{\crp{K}\Delta}P_2.
  \]
  (Note that $(\enmo_{\crp{K}}P_i)^{\Delta}=\enmo_{\crp{K}\Delta}P_i$.)
  From Lemma~\ref{l:fieldcatequiv}, it follows that
  $\enmo_{\crp{K}\Delta}P_i
   \iso \enmo_{\crp{E}}(P_i^{\Delta})$.
  The centralizer $P_i^{\Delta}$ is an $\crp{E}G$\nbd submodule
  of $P_i$, 
  and the isomorphisms above are isomorphisms of $G$\nbd algebras.
  Thus $\enmo_{\crp{E}}(P_i^{\Delta})$ is a trivial $G$\nbd algebra
  over~$\crp{E}$ (for $i=1$,~$2$),
  and $T_1^{\Delta}$ and $T_2^{\Delta}$ are equivalent.
\end{proof}
We write 
\[ C(\Delta)\colon \BrCliff(G\times\Delta,\crp{K})
             \to \BrCliff(G, \crp{E})
    ,\quad
   C(\Delta)[T]=[T^{\Delta}].
\]
\begin{lemma}\label{l:galoiscentrsc}
  We have 
  \begin{align*}
   C(\Delta)(\SC(G\times \Delta,\crp{K}))
   &\subseteq \SC(G, \crp{E})
   \quad\text{and}\\
   C(\Delta)(\SC_{\macl}(G\times \Delta,\crp{K}))
     &\subseteq \SC_{\macl}(G, \crp{E}).
  \end{align*}
\end{lemma}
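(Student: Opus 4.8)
The plan is to take an arbitrary element of $\SC(G\times\Delta,\crp{K})$, write it as $\brcls{\theta}{\kappa}{\crp{F}}$ for a Clifford pair $(\theta,\kappa\colon\oG\to G\times\Delta)$ with $N=\ker\kappa$, $\theta\in\Irr N$ semi-invariant in $\oG$ over $\crp{F}$, $\crp{F}(\theta)=\crp{K}$, and inducing the given action of $G\times\Delta$ on $\crp{K}$, and then to exhibit a Clifford pair over $G$ realizing its image under $C(\Delta)$. Put $M=\kappa^{-1}(1\times\Delta)$, a normal subgroup of $\oG$ with $N\leq M$ and $M/N\iso\Delta$; let $\kappa_1\colon\oG\to G$ be $\kappa$ composed with the projection $G\times\Delta\to G$, so that $\ker\kappa_1=M$; and set $\theta_1=\theta^M$. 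The claim is that $(\theta_1,\kappa_1)$ is a Clifford pair over $G$ with $\brcls{\theta_1}{\kappa_1}{\crp{F}}=C(\Delta)\brcls{\theta}{\kappa}{\crp{F}}$, lying in $\SC(G,\crp{E})$, and in $\SC_{\macl}(G,\crp{E})$ as soon as $(\theta,\kappa)\in\macl$.

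The first task is to analyse $(\theta_1,\kappa_1)$. Since $(\theta,\kappa)$ induces the given action, the homomorphism $M/N\to\Gal(\crp{K}/\crp{F})$ furnished by semi-invariance of $\theta$ \cite[Lemma~2.1]{i81b} is, under $M/N\iso\Delta$, the canonical isomorphism $\Delta\iso\Gal(\crp{K}/\crp{E})$; in particular it is injective, so the inertia group of $\theta$ in $M$ equals $N$ and hence $\theta_1=\theta^M\in\Irr M$ by the Clifford correspondence. The same computation applied to the $\crp{F}$-Galois conjugates of $\theta$ shows that the $M$-orbit of $\theta$ is $\{\theta^{\beta}:\beta\in\Gal(\crp{K}/\crp{E})\}$ and, more generally, that an irreducible character of $M$ lies over an $\crp{F}$-conjugate of $\theta$ if and only if it is an $\crp{F}$-conjugate of $\theta^M$ (induction commuting with the Galois action); one reads off from this that $\crp{F}(\theta_1)=\crp{E}$. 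Finally, if $\hat g\in\oG$ satisfies $\theta^{\hat g\alpha}=\theta$ with $\alpha\in\Gal(\crp{K}/\crp{F})$, then $\theta_1^{\hat g\alpha}=\theta_1$, and $\alpha$ carries $\crp{E}=\crp{F}(\theta_1)$ onto the field of values of a $\oG$-conjugate of $\theta_1$, which is again $\crp{E}$ (conjugation does not change fields of values); so $\alpha|_{\crp{E}}\in\Gal(\crp{E}/\crp{F})$ and $\theta_1$ is semi-invariant in $\oG$ over $\crp{F}$. If moreover $\kappa(\hat g)=(g,\delta)$, the relevant $\alpha$ is the action of $(g,\delta)$ on $\crp{K}$, whose restriction to $\crp{E}$ is the action of $g\in G$; hence $(\theta_1,\kappa_1)$ induces the given action of $G$ on $\crp{E}$, and $\brcls{\theta_1}{\kappa_1}{\crp{F}}\in\SC(G,\crp{E})$.

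To identify $C(\Delta)\brcls{\theta}{\kappa}{\crp{F}}$ with $\brcls{\theta_1}{\kappa_1}{\crp{F}}$, pick a nonzero $\theta$-quasihomogeneous $\crp{F}\oG$-module $V$, so that $S=\enmo_{\crp{F}N}(V)$ represents $\brcls{\theta}{\kappa}{\crp{F}}$ as a $(G\times\Delta)$-algebra over $\Z(\crp{F}Ne)\iso\crp{K}$ (here $e=e_{(\theta,\crp{F})}$ and $\crp{K}$ is identified with the center via $\omega_{\theta}$). The action of $\Delta=1\times\Delta$ on $S$ is conjugation by lifts of $\Delta$ lying in $M$, so $S^{\Delta}=\C_{\enmo_{\crp{F}}(V)}(\crp{F}M)=\enmo_{\crp{F}M}(V)$; this holds for any nonzero $V$. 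By the constituent computation above, the $M$-constituents of $V$ are $\crp{F}$-conjugates of $\theta_1$, i.e.\ $V$ is a nonzero $\theta_1$-quasihomogeneous $\crp{F}\oG$-module, so $\enmo_{\crp{F}M}(V)$ represents $\brcls{\theta_1}{\kappa_1}{\crp{F}}$; a routine check shows that the residual $G$-action on $\enmo_{\crp{F}M}(V)$ and its $\crp{E}$-structure (via $\omega_{\theta_1}$) agree with those induced by $C(\Delta)$ (via $\crp{K}^{\Delta}=\crp{E}$). Thus $C(\Delta)\brcls{\theta}{\kappa}{\crp{F}}=[S^{\Delta}]=\brcls{\theta_1}{\kappa_1}{\crp{F}}\in\SC(G,\crp{E})$, which is the first inclusion. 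For the second, suppose in addition $(\theta,\kappa)\in\macl$, say $\theta=\lambda^N$ with $A\nteq\oG$, $A\subseteq N$, $\lambda\in\Lin A$; then $A\subseteq N\subseteq M$, $A\nteq\oG$, and $\theta_1=\theta^M=(\lambda^N)^M=\lambda^M$ by transitivity of induction, so $(\theta_1,\kappa_1)\in\macl$ and $C(\Delta)\brcls{\theta}{\kappa}{\crp{F}}\in\SC_{\macl}(G,\crp{E})$.

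The step I expect to be the main obstacle is the first one: the inertia-group computation for $\theta$ in $M$ together with the proof that $\crp{F}(\theta^M)=\crp{E}$, which needs one to track $\crp{F}$-Galois conjugates carefully through the induction from $N$ to $M$. Once that is in place, the identification $S^{\Delta}=\enmo_{\crp{F}M}(V)$ is formal, and the only remaining care is to match the two a priori different $\crp{E}$-structures on this algebra.
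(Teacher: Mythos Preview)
Your proposal is correct and follows essentially the same approach as the paper: define $M=\kappa^{-1}(\Delta)$, compose $\kappa$ with the projection to $G$, use faithfulness of the $\Delta$-action on $\crp{K}$ to see that the inertia group of $\theta$ in $M$ is $N$ (so $\theta^M\in\Irr M$), and identify $S^{\Delta}=\enmo_{\crp{F}M}(V)$; the $\macl$ case then follows by transitivity of induction. The paper's proof is much terser (it leaves the verifications $\crp{F}(\theta^M)=\crp{E}$, semi-invariance, and quasihomogeneity largely implicit), but the structure and all key ideas coincide with yours.
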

\begin{proof}
  Let 
  \[ \begin{tikzcd}
        1 \rar & N \rar & \oG \rar{\kappa}
               & G \times \Delta \rar
               &1
     \end{tikzcd}
  \]
  be an exact sequence and $V$ an $\crp{F}\oG$\nbd module
  with $S=\enmo_{\crp{F}N}V$, and such that the class of $S$ is in
  $\SC(G\times\Delta,\crp{K})$.
  Then for $M= \kappa^{-1}(\Delta)$
  we have an exact sequence
  \[ \begin{tikzcd}
       1\rar & M \rar & \oG \rar{\pi}
             & G \rar & 1,
    \end{tikzcd}
  \]
  where $\pi $ is
  $\begin{tikzcd}
     \oG \rar{\kappa} & G\times\Delta \rar & G.
  \end{tikzcd}$
  The equality
  $\enmo_{\crp{F}M}(V)= S^{\Delta}$
  proves the first assertion.
  
  Let $\theta\in \Irr N$ be an irreducible constituent of the character
  of $V_{N}$.
  Then $\crp{F}(\theta)= \crp{K}\iso\Z(S)$.
  Since $\Delta$ acts faithfully on $\crp{K}$,
  it follows that $\theta^m \neq \theta$ for all
  $m\in M\setminus N$.
  Thus $\theta^M$ is irreducible, and it follows
  that
  $[S^{\Delta}]= \brcls{\theta^M}{\pi}{\crp{F}}$.
  This shows, in particular, the second assertion.
\end{proof}
\begin{lemma}\label{l:fieldpowerred}
  Let $[S]\in \BrCliff(G, \crp{E})$.
  If $[S\tensor_{\crp{F}}\crp{L}]\in \SC_{\macl}(G,\crp{K})$, 
  then $[S]^{\abs{\Delta}}\in \SC_{\macl}(G, \crp{E})$.
\end{lemma}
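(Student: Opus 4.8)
The plan is to move the problem, via the isomorphism of Proposition~\ref{p:fieldred}, into the group $\BrCliff(G\times\Delta,\crp{K})$, where the subgroup inclusion $G=G\times 1\leq G\times\Delta$ provides a restriction and a corestriction map whose composite is a power map; then to push the result back to $\BrCliff(G,\crp{E})$ using Lemma~\ref{l:galoiscentrsc}.

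Concretely, let $\Phi\colon\BrCliff(G,\crp{E})\to\BrCliff(G\times\Delta,\crp{K})$ denote the group isomorphism $[S]\mapsto[S\tensor_{\crp{E}}\crp{K}]$ of Proposition~\ref{p:fieldred}, with inverse $C(\Delta)$, $[T]\mapsto[T^{\Delta}]$. The first step is to identify $\Res_G^{G\times\Delta}\bigl(\Phi([S])\bigr)$ inside $\BrCliff(G,\crp{K})$. Restricting the $(G\times\Delta)$\nbd action on $S\tensor_{\crp{E}}\crp{K}$ to the first factor $G$ leaves $G$ acting on $S$ and on the $\crp{E}$\nbd part of $\crp{K}\iso\crp{E}\tensor_{\crp{F}}\crp{L}$ while fixing $\crp{L}$; under the algebra isomorphism $S\tensor_{\crp{E}}\crp{K}\iso S\tensor_{\crp{F}}\crp{L}$ this is precisely the $G$\nbd algebra structure of $S\tensor_{\crp{F}}\crp{L}$ over its centre $\crp{K}$. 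Hence $\Res_G^{G\times\Delta}(\Phi([S]))=[S\tensor_{\crp{F}}\crp{L}]$, which by hypothesis lies in $\SC_{\macl}(G,\crp{K})$.

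Next I would apply $\Cores_G^{G\times\Delta}\colon\BrCliff(G,\crp{K})\to\BrCliff(G\times\Delta,\crp{K})$. By \cite[Corollary~9.2]{ladisch15b} corestriction carries $\SC_{\macl}(G,\crp{K})$ into $\SC_{\macl}(G\times\Delta,\crp{K})$, so $\Cores_G^{G\times\Delta}\bigl(\Res_G^{G\times\Delta}(\Phi([S]))\bigr)\in\SC_{\macl}(G\times\Delta,\crp{K})$. On the other hand, by \cite[Theorem~6.3]{ladisch15a} this composite equals the $\abs{G\times\Delta:G}=\abs{\Delta}$\nbd th power map, so the element just obtained is $\Phi([S])^{\abs{\Delta}}=\Phi\bigl([S]^{\abs{\Delta}}\bigr)$, using that $\Phi$ is a homomorphism. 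Applying the inverse isomorphism $C(\Delta)$ and invoking Lemma~\ref{l:galoiscentrsc}, which gives $C(\Delta)\bigl(\SC_{\macl}(G\times\Delta,\crp{K})\bigr)\subseteq\SC_{\macl}(G,\crp{E})$, we conclude $[S]^{\abs{\Delta}}\in\SC_{\macl}(G,\crp{E})$, as wanted.

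I expect the main obstacle to be pure bookkeeping in the first step: verifying that the scalar extension $S\tensor_{\crp{E}}\crp{K}$, with its $\Delta$\nbd action forgotten, really is the $G$\nbd field algebra $S\tensor_{\crp{F}}\crp{L}$ over $\crp{K}$ (with $G$ fixing $\crp{L}$), and checking that the two cited properties of corestriction — that it preserves $\SC_{\macl}$ and that $\Cores\circ\Res$ is raising to the index power — apply to the subgroup $G\leq G\times\Delta$ in the presence of the non-trivial $(G\times\Delta)$\nbd action on $\crp{K}$. These are the same verifications already carried out in Section~\ref{sec:red_pgrp}, now applied to $G\leq G\times\Delta$ rather than to a Sylow subgroup, so no new ideas should be required.
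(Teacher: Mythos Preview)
Your proposal is correct and follows essentially the same route as the paper's proof: identify $S\tensor_{\crp{F}}\crp{L}\iso S\tensor_{\crp{E}}\crp{K}$ as a $(G\times\Delta)$\nbd algebra, apply $\Cores_G^{G\times\Delta}\circ\Res_G^{G\times\Delta}$ (which is the $\abs{\Delta}$\nbd th power by \cite[Theorem~6.3]{ladisch15a} and preserves $\SC_{\macl}$ by \cite[Corollary~9.2]{ladisch15b}), and then apply $C(\Delta)$ together with Lemma~\ref{l:galoiscentrsc}. Your only addition is making the identification $\Res_G^{G\times\Delta}\Phi([S])=[S\tensor_{\crp{F}}\crp{L}]$ explicit, which the paper states in one line.
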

\begin{proof}
  Note that we can view 
  $S\tensor_{\crp{F}}\crp{L}$ as a
  $(G\times\Delta)$\nbd algebra.
  Applying the corestriction map
  with $G\leq G\times \Delta$, we get~\cite[Theorem~6.3]{ladisch15a}
  \[ \Cores_G^{G\times \Delta}\Res_G^{G\times \Delta}
       [S\tensor_{\crp{F}}\crp{L}]
     = [S\tensor_{\crp{F}}\crp{L}]^{\abs{\Delta}}.
  \]
  Since we assume 
  $\Res_G^{G\times \Delta}[S\tensor_{\crp{F}}\crp{L}] 
   \in \SC_{\macl}(G,\crp{K})$,
  it follows~\cite[Corollary~9.2]{ladisch15b} 
  that 
  \[[S\tensor_{\crp{F}}\crp{L}]^{\abs{\Delta}}
   \in \SC_{\macl}(G\times\Delta,\crp{K}).
   \]
  Applying $C(\Delta)$ yields, by 
  Lemma~\ref{l:galoiscentrsc}, that
  $[S]^{\abs{\Delta}}\in\SC_{\macl}(G,\crp{E})$.
\end{proof}

Recall that we want to prove that an arbitrary
element $a\in \SC(G, \crp{E})$ is in fact
an element of $\SC_{\cyccl}(G,\crp{E})$,
and that we have already reduced to the case where
$a$ has $p$\nbd power order and
$G$ is a $p$\nbd group.
The results of this section yield a further reduction:
\begin{cor}\label{c:reducbigfield}
  Let $G$ be a $p$-group and 
  let 
  \[ a =\brcls{\theta}{\kappa\colon \oG\to G}{\crp{F}}
       \in \SC(G,\crp{E})
  \]
  have $p$\nbd power order.
  (So 
  $\theta$ is semi-invariant over $\crp{F}$ in $\oG$
  and $\crp{F}(\theta)=\crp{E}$.)  
  Let $\eps$ be a primitive $\abs{\oG}$-th root of unity.
  Then there is a unique field\/ $\crp{L}$ such that\/
  $\abs{\crp{L}:\crp{F}}$ is a $p'$\nbd number and
  $\abs{\crp{F}(\eps):\crp{L}}$ is a $p$\nbd power.
  If 
  $\brcls{\theta}{\kappa}{\crp{L}}
   \in \SC_{\cyccl}(G, \crp{L}(\theta))$,
  then 
  $a =\brcls{\theta}{\kappa}{\crp{F}}
   \in \SC_{\cyccl}(G,\crp{E})$.
\end{cor}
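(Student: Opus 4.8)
The plan is to enlarge the base field in the ``$p'$\nbd direction'' so that Lemma~\ref{l:fieldpowerred} applies, and then to use that $\SC_{\cyccl}(G,\crp{E})$ is a subgroup. Since $\theta$ is a character of $N\leq\oG$, its values lie in $\crp{F}(\eps)$, so $\crp{E}=\crp{F}(\theta)\subseteq\crp{F}(\eps)$, and $\crp{F}(\eps)/\crp{F}$ is abelian Galois; hence so is every subextension, and the field $\crp{L}$ of the statement is the fixed field of the (unique) Sylow $p$\nbd subgroup of $\Gal(\crp{F}(\eps)/\crp{F})$ (whence its existence and uniqueness). Put $\crp{F}_0:=\crp{L}\cap\crp{E}$ and $\crp{K}:=\crp{L}(\theta)=\crp{E}\crp{L}$. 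Then $[\crp{F}_0:\crp{F}]$ divides $[\crp{L}:\crp{F}]$ and so is prime to $p$, while $[\crp{E}:\crp{F}_0]=[\crp{K}:\crp{L}]$ divides $[\crp{F}(\eps):\crp{L}]$ and so is a $p$\nbd power; thus $\Gal(\crp{E}/\crp{F}_0)$ is the Sylow $p$\nbd subgroup of the abelian group $\Gal(\crp{E}/\crp{F})$. As $G$ is a $p$\nbd group, the image of the homomorphism $G\to\Gal(\crp{E}/\crp{F})$ attached to the semi-invariance of $\theta$ (Section~\ref{sec:review}) is a $p$\nbd group, hence lies in $\Gal(\crp{E}/\crp{F}_0)$; so $G$ fixes $\crp{F}_0$, and $\theta$ is semi-invariant in $\oG$ over $\crp{F}_0$ and over $\crp{L}$ (using $\crp{F}_0(\theta)=\crp{E}$, $\crp{L}(\theta)=\crp{K}$ and the restriction isomorphism $\Gal(\crp{K}/\crp{L})\iso\Gal(\crp{E}/\crp{F}_0)$), inducing the given action of $G$ on $\crp{E}$ and on $\crp{K}$.

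Next I would record two base-field changes for the Brauer--Clifford element. (i) For $\crp{F}\subseteq\crp{F}'\subseteq\crp{E}$ with $\theta$ still semi-invariant over $\crp{F}'$ inducing the same $G$\nbd action on $\crp{E}$, the Wedderburn component $\crp{F}'Ne_{(\theta,\crp{F}')}$ is isomorphic, as a $G$\nbd algebra over $\crp{E}$, to $\crp{E}Ne_{\theta}$ (via $a\mapsto ae_{\theta}$; both have $\crp{F}$\nbd dimension $\theta(1)^2[\crp{E}:\crp{F}]$), hence so is $\crp{F}Ne_{(\theta,\crp{F})}$; by the independence of the class on the chosen $\theta$\nbd quasihomogeneous module~\cite[Theorem~7.6]{turull09}, $\brcls{\theta}{\kappa}{\crp{F}}=\brcls{\theta}{\kappa}{\crp{F}'}$ in $\BrCliff(G,\crp{E})$. (This is the element-wise form of the fact, recalled at the start of Section~\ref{sec:cycltoabel}, that $\SC^{(\crp{F})}(G,\crp{E})$ depends only on the $G$\nbd field $\crp{E}$; in particular $\SC_{\cyccl}(G,\crp{E})$ is the same subgroup whether computed over $\crp{F}$ or over $\crp{F}_0$.) Apply this with $\crp{F}'=\crp{F}_0$. (ii) The idempotents $e_{(\theta,\crp{F}_0)}$ and $e_{(\theta,\crp{L})}$ agree as elements of $\crp{L}N$: their defining sums run over the $\Gal(\crp{E}/\crp{F}_0)$\nbd orbit and the $\Gal(\crp{K}/\crp{L})$\nbd orbit of $e_{\theta}$, and these coincide because $e_{\theta}$ has coefficients in $\crp{E}$ and restriction identifies the two groups. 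Consequently, for a $\theta$\nbd quasihomogeneous $\crp{F}_0\oG$\nbd module $V_0$, the module $V_0\tensor_{\crp{F}_0}\crp{L}$ is $\theta$\nbd quasihomogeneous over $\crp{L}$ and $\enmo_{\crp{F}_0N}(V_0)\tensor_{\crp{F}_0}\crp{L}\iso\enmo_{\crp{L}N}(V_0\tensor_{\crp{F}_0}\crp{L})$; that is, scalar extension from $\crp{F}_0$ to $\crp{L}$ sends $\brcls{\theta}{\kappa}{\crp{F}_0}$ to $\brcls{\theta}{\kappa}{\crp{L}}$.

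Now I would apply Lemma~\ref{l:fieldpowerred} with $\crp{F}_0,\crp{L},\crp{E},\crp{K}$ in place of $\crp{F},\crp{L},\crp{E},\crp{K}$ there---the standing hypotheses hold, since $\crp{F}_0\subseteq\crp{E}^G$, $\crp{K}=\crp{E}\crp{L}$, $\crp{E}\cap\crp{L}=\crp{F}_0$ and $\crp{L}/\crp{F}_0$ is Galois, with $\Delta:=\Gal(\crp{L}/\crp{F}_0)$---taking $[S]=\brcls{\theta}{\kappa}{\crp{F}_0}$. By step (ii), $[S\tensor_{\crp{F}_0}\crp{L}]=\brcls{\theta}{\kappa}{\crp{L}}$, which by hypothesis lies in $\SC_{\cyccl}(G,\crp{L}(\theta))=\SC_{\cyccl}(G,\crp{K})\subseteq\SC_{\macl}(G,\crp{K})$; so Lemma~\ref{l:fieldpowerred} and Proposition~\ref{p:cycltoabel} give $\brcls{\theta}{\kappa}{\crp{F}_0}^{\abs{\Delta}}\in\SC_{\macl}(G,\crp{E})=\SC_{\cyccl}(G,\crp{E})$. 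Finally $\abs{\Delta}=[\crp{L}:\crp{F}_0]$ divides $[\crp{L}:\crp{F}]$ and hence is prime to $p$, whereas $a=\brcls{\theta}{\kappa}{\crp{F}}=\brcls{\theta}{\kappa}{\crp{F}_0}$ has $p$\nbd power order by assumption; since $\SC_{\cyccl}(G,\crp{E})$ is a subgroup of $\BrCliff(G,\crp{E})$ (Corollary~\ref{c:sccgroup}) and contains $a^{\abs{\Delta}}$, it contains $\erz{a^{\abs{\Delta}}}$, hence $a$. Thus $a\in\SC_{\cyccl}(G,\crp{E})$.

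The step I expect to need the most care is the descent to the base field $\crp{F}_0=\crp{L}\cap\crp{E}$: one must check that this purely field-theoretic move (making the base field absorb the $p'$\nbd part of $\crp{F}(\eps)/\crp{F}$) leaves both the element $\brcls{\theta}{\kappa}{\crp{F}}$ and the subgroup $\SC_{\cyccl}(G,\crp{E})$ intact, and that $\crp{F}_0$ satisfies every standing hypothesis ($G$\nbd fixed, $\crp{F}_0(\theta)=\crp{E}$, and $\theta$ semi-invariant over $\crp{F}_0$ and $\crp{L}$). Once this is in hand, the rest is a formal combination of Lemma~\ref{l:fieldpowerred}, Proposition~\ref{p:cycltoabel} and Corollary~\ref{c:sccgroup}.
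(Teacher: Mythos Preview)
Your proof is correct and follows essentially the same route as the paper: identify $\crp{L}$ as the fixed field of the Sylow $p$-subgroup of $\Gal(\crp{F}(\eps)/\crp{F})$, pass to a base field contained in $\crp{E}^G$, apply Lemma~\ref{l:fieldpowerred}, and conclude via Proposition~\ref{p:cycltoabel} and Corollary~\ref{c:sccgroup} using that $a$ has $p$-power order while $\abs{\Delta}$ is prime to $p$.

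The only notable difference is in the base-field descent. The paper invokes \cite[Lemma~6.1]{ladisch15b} to replace $\crp{F}$ by $\crp{E}^G$ outright (so that $\crp{E}\cap\crp{L}=\crp{F}$ automatically), whereas you keep the original $\crp{L}$ fixed and instead introduce $\crp{F}_0:=\crp{L}\cap\crp{E}$, verifying by hand that $\crp{F}_0\subseteq\crp{E}^G$ (via the $p$-group image argument) and that $\brcls{\theta}{\kappa}{\crp{F}}=\brcls{\theta}{\kappa}{\crp{F}_0}$. Your step~(i) is exactly the content of the cited lemma; your sketch of it (comparing the Wedderburn blocks over $\crp{F}$, $\crp{F}'$ and $\crp{E}$) is a little brisk but sound in spirit, and in any case you correctly note that it is ``the element-wise form'' of the equality $\SC^{(\crp{F})}(G,\crp{E})=\SC^{(\crp{E}^G)}(G,\crp{E})$ already recalled in Section~\ref{sec:cycltoabel}. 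Your version has the minor advantage of not silently changing the meaning of $\crp{L}$ when $\crp{F}$ is replaced.
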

\begin{proof}
  The field $\crp{L}$ is uniquely determined as the fixed field
  of a Sylow $p$\nbd subgroup of the abelian Galois group
  $\Gal(\crp{F}(\eps)/\crp{F})$.
  
  By assumption, $\crp{E}=\crp{F}(\theta)$.
  By a result from my previous paper~\cite[Lemma~6.1]{ladisch15b},
  we know that 
  $\brcls{\theta}{\kappa}{\crp{F}}
    = \brcls{\theta}{\kappa}{\crp{E}^G}$.
  Therefore, we may assume without loss of generality that
  $\crp{E}^G=\crp{F}$.
  Then $\abs{\crp{E}:\crp{F}}$ is a power of $p$,
  since $G$ is a $p$\nbd group.
  Thus $\crp{E}\cap \crp{L}=\crp{F}$.
  With 
  $\crp{K}=\crp{E}\crp{L}\iso\crp{E}\tensor_{\crp{F}}\crp{L}$,
  the assumptions from the beginning of the section hold.
   
  Now 
  $\crp{L}(\theta)=\crp{L}\crp{F}(\theta)=\crp{L}\crp{E}=\crp{K}$
  and 
  $\brcls{\theta}{\kappa}{\crp{L}}
   = \brcls{\theta}{\kappa}{\crp{F}}\tensor_{\crp{F}}\crp{L}$.
  If 
  $\brcls{\theta}{\kappa}{\crp{L}}\in \SC_{\macl}(G,\crp{K})$
  then $\brcls{\theta}{\kappa}{\crp{F}}^{\abs{\crp{L}:\crp{F}}}\in 
       \SC_{\macl}(G,\crp{E})$
  by Lemma~\ref{l:fieldpowerred}.
  Since $a= \brcls{\theta}{\kappa}{\crp{F}}$ 
  has $p$-power order, it follows that
  $a\in \erz{a^{\abs{\crp{L}:\crp{F}}}}
   \subseteq \SC_{\macl}(G,\crp{E})=\SC_{\cyccl}(G,\crp{E})$.
  This proves the corollary.
\end{proof}

\section{Reduction to elementary groups}
Let $\crp{L}$ be a field of characteristic $0$.
Recall that a group $H$ is called
\defemph{$\crp{L}$-elementary for the prime $p$}
or $\crp{L}$-$p$-elementary, if the following two conditions hold:
\begin{enums}
\item $H=PC$ is the semidirect product of a normal cyclic
      $p'$\nbd group $C$ and a $p$\nbd group $P$.
\item The linear characters of $C$ are
      semi-invariant over $\crp{L}$ in $H$.
\end{enums}
The second condition is sometimes expressed differently.
We explain the connection.
Let $\lambda\in \Lin C$ be a faithful character of $C$.
Then $\lambda$ is semi-invariant in $H=PC$ if
for every $y\in P$ there is 
      $\sigma\in \Gal(\crp{L}(\lambda)/\crp{L})$
      such that $\lambda^{y\sigma}=\lambda$.
Now $\crp{L}(\lambda)=\crp{L}(\zeta)$,
where $\zeta$ is a primitive 
$\abs{C}$\nbd th root of unity.
Then for 
$\sigma\in \Gal(\crp{L}(\zeta)/\crp{L})$, 
there is a unique $k=k(\sigma)\in (\ints/\abs{C}\ints)^*$
with $\zeta^{\sigma}= \zeta^{k(\sigma)}$.
The second condition above is equivalent to:
For every $y\in P$ there is 
$\sigma\in \Gal(\crp{L}(\zeta)/\crp{L})$
such that $c^y = c^{k(\sigma)}$ for all $c\in C$.

We need the following part of the generalized induction 
theorem:
\begin{prop}[{Brauer, Berman, Witt~\cite[\S~12.6]{SerreLRFG}}]
  \label{p:bermanwittp}
  Let $\oG$ be a finite group with
  $\abs{\oG}=bp^k$, where the prime $p$
  does not divide $b$, and let $\crp{L}$
  be a field.
  Then we can write 
  \[ b1_{\oG} = \sum_{X,\tau} a_{X,\tau}\tau^{\oG}
  \]
  with integers $a_{X,\tau}$,
  where $(X,\tau)$ runs through the set of pairs
  such that $X$ is an $\crp{L}$-$p$-elementary subgroup of $\oG$
  and
  $\tau$ is the character of an $\crp{L}X$\nbd module.  
\end{prop}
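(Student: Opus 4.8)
I would recognise this as the $p$\nbd local form of the Brauer--Berman--Witt induction theorem and prove it by Brauer's method, run over the field $\crp{L}$. Write $R_{\crp{L}}(\oG)$ for the Grothendieck group of finitely generated $\crp{L}\oG$\nbd modules; it is a commutative ring under $\tensor_{\crp{L}}$, with identity the class $1_{\oG}$ of the trivial module, and the identity in the proposition is precisely an equation in this ring. Put
\[
  I_p \;=\; \sum_{X}\operatorname{Ind}_{X}^{\oG}\bigl(R_{\crp{L}}(X)\bigr)\;\subseteq\; R_{\crp{L}}(\oG),
\]
the sum over the $\crp{L}$\nbd $p$\nbd elementary subgroups $X\leq\oG$; then the assertion to be proved is exactly $b\cdot 1_{\oG}\in I_p$. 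The first, routine, point is that $I_p$ is an \emph{ideal} of $R_{\crp{L}}(\oG)$: by the projection formula $\chi\cdot\operatorname{Ind}_X^{\oG}(\tau)=\operatorname{Ind}_X^{\oG}(\chi|_X\cdot\tau)$, and since the restriction to $X$ of an $\crp{L}\oG$\nbd module is an $\crp{L}X$\nbd module, $\chi|_X\cdot\tau\in R_{\crp{L}}(X)$.

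Next I would invoke Artin's induction theorem~\cite[\S~9.2]{SerreLRFG}. Every cyclic subgroup $C\leq\oG$ is $\crp{L}$\nbd $p$\nbd elementary: write $C$ as the internal direct product of its Sylow $p$\nbd subgroup and its $p'$\nbd part, and note that the semi\nbd invariance condition in the definition is vacuous because $C$ is abelian. Artin's theorem, in its integral form, gives integers $a_C$ with $\abs{\oG}\cdot 1_{\oG}=\sum_C a_C\,(1_C)^{\oG}$; each $1_C$ is the character of the trivial $\crp{L}C$\nbd module, so $\abs{\oG}\cdot 1_{\oG}\in I_p$, and as $I_p$ is an ideal, $\abs{\oG}\cdot R_{\crp{L}}(\oG)\subseteq I_p$. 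Thus the quotient ring $Q:=R_{\crp{L}}(\oG)/I_p$ is finite with $\abs{\oG}\cdot Q=0$. Now everything reduces to the single congruence
\[
  \overline{1_{\oG}}\in pQ,\qquad\text{that is,}\qquad 1_{\oG}\in I_p+p\,R_{\crp{L}}(\oG).
\]
Indeed, granting it, write $\overline{1_{\oG}}=p\,\overline{\eta}$ in $Q$; since $\overline{1_{\oG}}$ is the identity of $Q$, raising to the $k$\nbd th power gives $\overline{1_{\oG}}=\overline{1_{\oG}}^{\,k}=p^{k}\overline{\eta}^{\,k}$, whence $b\cdot\overline{1_{\oG}}=bp^{k}\overline{\eta}^{\,k}=\abs{\oG}\cdot\overline{\eta}^{\,k}=0$ in $Q$, i.e.\ $b\cdot 1_{\oG}\in I_p$, as wanted.

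It remains to establish that congruence, and this is where I expect the real work to lie; it is Brauer's construction, carried out $\crp{L}$\nbd rationally. For each $p$\nbd regular element $t\in\oG$ (of order prime to $p$) set $C_t=\erz{t}$, choose a Sylow $p$\nbd subgroup $P_t$ of $\C_{\oG}(t)$, and put $H_t=C_t\times P_t$; as $P_t$ centralises $t$, the group $H_t$ is $\crp{L}$\nbd $p$\nbd elementary (the semi\nbd invariance condition again holds trivially). To each $H_t$ one attaches a suitable $\crp{L}H_t$\nbd character $\tau_t$ --- essentially the one supported on generators of $C_t$ with a factor $\abs{P_t}$ coming from the regular representation of $P_t$, made $\crp{L}$\nbd rational by a trace from $\crp{L}(\tau_t)$ down to $\crp{L}$ --- and forms $\xi=\sum_t m_t\,(\tau_t)^{\oG}$, over a set of representatives of the $\oG$\nbd classes of $p$\nbd regular elements and with integer coefficients $m_t$. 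Evaluating $\xi$ on a $p$\nbd regular element $g$ through the induced\nbd character formula, each summand collapses (Mackey, plus a fixed\nbd point count for the $p$\nbd group $P_t$) to the number of fixed points of a $p$\nbd group on a suitable finite set, which is $\equiv$ its cardinality $\pmod p$; an inclusion--exclusion over the poset of cyclic $p$\nbd regular subgroups --- with the $m_t$ chosen by Möbius inversion, as in $\sum_{d\mid n}\mu(d)$\nbd type identities --- then forces $\xi(g)\equiv 1\pmod p$ for every $p$\nbd regular $g$. Since a congruence modulo $p$ in $R_{\crp{L}}(\oG)$ is detected on the $p$\nbd regular classes (reduction of Brauer characters), this yields $\xi\equiv 1_{\oG}\pmod{p\,R_{\crp{L}}(\oG)}$ with $\xi\in I_p$, which is the required congruence. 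The delicate point throughout --- and the only genuine obstacle --- is to keep the induced characters honest $\crp{L}X$\nbd characters, so that $\xi$ really lands in $I_p$ and not merely in its complex analogue; this is exactly what is carried out in Serre~\cite[\S~12.6]{SerreLRFG}.
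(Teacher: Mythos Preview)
The paper does not give its own proof of this proposition; it is quoted from Serre~\cite[\S~12.6]{SerreLRFG} as a known ingredient. Your outline is precisely the standard Brauer argument presented there, so in that sense there is nothing to compare.

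There is, however, a genuine slip in your sketch at the final step. The assertion that ``a congruence modulo $p$ in $R_{\crp{L}}(\oG)$ is detected on the $p$\nbd regular classes'' is false as stated: already for $\oG=C_p$ the virtual character $\lambda-1$ (with $\lambda$ a faithful linear character) vanishes on the unique $p$\nbd regular element but does not lie in $pR_{\crp{L}}(\oG)$. The correct finishing move, and the one Serre actually makes, is ideal\nbd theoretic rather than pointwise. One shows that for every $p$\nbd regular $g$ there is some $\xi_g\in I_p$ with $\xi_g(g)\not\equiv 0\pmod p$; since every maximal ideal of $R_{\crp{L}}(\oG)$ containing $p$ is the kernel of evaluation (mod~$p$) at some $p$\nbd regular class, the ideal $I_p+pR_{\crp{L}}(\oG)$ lies in no maximal ideal and hence equals $R_{\crp{L}}(\oG)$, whence $1_{\oG}\in I_p+pR_{\crp{L}}(\oG)$. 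Your reduction to this congruence and your construction of the induced characters from the groups $H_t=C_t\times P_t$ are correct; only the passage from values on $p$\nbd regular elements to the conclusion in the representation ring needs this repair.
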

The next result is similar to results of
Dade~\cites{dade70b}[cf.][Theorem~8.24]{isaCTdov}
and Schmid\cite[Lemma~5.1]{schmid88}.
See also~\cite[Proposition~3.3]{Yamada74}.
\begin{thm}\label{t:elemsubext}
  Let $\oG$ be a finite group with normal subgroup $N\nteq \oG$
  such that $\oG/N$
  is a $p$\nbd group for some prime $p$.
  Let $\theta\in \Irr (N)$ be 
  $\crp{L}$\nbd semi-invariant,
  where $\crp{L}$ is some field
  of characteristic $0$ such that 
  $\abs{\crp{L}(\zeta):\crp{L}}$ is a $p$\nbd power,
  where $\zeta$ is an $\exp(\oG)$-th root of unity.
  Then there exists an 
  $\crp{L}$-$p$-elementary group 
  $X\leq \oG$ and 
  $\phi\in \Irr(N\cap X)$ such that the following hold:
  \begin{enums}
  \item $\oG=XN$.
  \item $\ipch{\theta}{\phi^N}\not\equiv 0 \mod p$.
  \item \label{i:elemsubext_fields} $\crp{L}(\theta)=\crp{L}(\phi)$.
  \item \label{i:elemsubext_galois} For each $x\in X$ there is 
       $\alpha=\alpha_x\in \Gal(\crp{L}(\theta)/\crp{L})$
       such that 
       $\theta^{x \alpha}=\theta$ and
       $\phi^{x\alpha} = \phi$.
  \end{enums}  
\end{thm}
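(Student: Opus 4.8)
The plan is to mimic the classical Brauer-type argument (as in \cite[Theorem~8.24]{isaCTdov} and \cite[Proposition~3.3]{Yamada74}), but carried out at the level of the fixed field $\crp{L}$ rather than $\rats$ or an algebraically closed field. First I would apply Proposition~\ref{p:bermanwittp} inside $\oG$: writing $\abs{\oG}=bp^k$ with $p\nmid b$, we get
\[ b1_{\oG} = \sum_{X,\tau} a_{X,\tau}\,\tau^{\oG}, \]
where $X$ ranges over $\crp{L}$-$p$-elementary subgroups of $\oG$ and $\tau$ over characters of $\crp{L}X$-modules. Multiplying through by the character $\chi$ of some irreducible $\crp{L}\oG$-module lying over the $\crp{L}$-orbit of $\theta$ (equivalently, pairing with such a $\chi$ and an irreducible complex constituent over $\theta$), and using Frobenius reciprocity $\ipch{\chi}{\tau^{\oG}}=\ipch{\chi_X}{\tau}$, one finds that for \emph{some} pair $(X,\tau)$ the multiplicity of a Galois conjugate of $\theta$ in $(\tau_{N\cap X})^{N}$, hence in $\chi_N$ restricted appropriately, is not divisible by $p$. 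The key point of the bookkeeping is that the contributions to $b$ (a $p'$-number) cannot all be $\equiv 0 \bmod p$, so at least one $\crp{L}$-$p$-elementary $X$ survives with a constituent $\phi\in\Irr(N\cap X)$ of $\tau_{N\cap X}$ satisfying $\ipch{\theta}{\phi^N}\not\equiv 0 \bmod p$.

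Having produced $X$ and $\phi$ with property (b), I would next secure $\oG = XN$. This is where I would use the hypothesis that $\oG/N$ is a $p$-group crucially: the image of $X$ in $\oG/N$ is a subgroup whose index divides $b$ times something — more precisely, since $\ipch{\theta}{\phi^N}\not\equiv 0\bmod p$ forces $\abs{\oG:XN}$ to be prime to $p$ (the induced character $\phi^{XN}$ followed by induction to $\oG$ would otherwise contribute a factor $\abs{\oG:XN}$ divisible by $p$ to the multiplicity, via $\phi^{\oG}=(\phi^{XN})^{\oG}$ and $\abs{\oG:XN}$ dividing $\ipch{\theta}{\phi^{\oG}}$ when $\theta$ is fixed by... ) — and as $\abs{\oG:XN}$ divides $\abs{\oG:N}$, a $p$-power, it must equal $1$. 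So $\oG=XN$, giving (a). One may need to first enlarge $X$ slightly or pass to a suitable conjugate so that this divisibility bookkeeping is clean; replacing $X$ by $X$ does not destroy $\crp{L}$-$p$-elementariness.

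For (c) and (d), I would invoke the subextension results of Section~\ref{sec:subext}, or rather a $p$-adapted variant: once $\oG=XN$ and $n:=\ipch{\theta_{N\cap X}}{\phi}\not\equiv 0\bmod p$ with $X$ playing the role of $\oH$ and $N\cap X$ of $L$, the field $\crp{L}(\phi)$ must coincide with $\crp{L}(\theta)=:\crp{E}$. Here I would argue: $\crp{L}(\phi^N)=\crp{L}(\theta)$ always contains... no — rather, I would use that $\theta$ is $\crp{L}$-semi-invariant in $\oG=XN$, hence its orbit is controlled by $\Gal(\crp{E}/\crp{L})$, and that $\phi$ lies over the same Galois orbit data; the hypothesis that $\abs{\crp{L}(\zeta):\crp{L}}$ is a $p$-power means $\abs{\crp{E}:\crp{L}}$ is a $p$-power, so any field between $\crp{L}$ and $\crp{E}$ has $p$-power index, and the divisibility of degrees coming from $n$ (prime to $p$) forces $\crp{L}(\phi)=\crp{E}$, giving (c). Then (d) — the Galois compatibility $\theta^{x\alpha}=\theta$ \emph{and} $\phi^{x\alpha}=\phi$ for a common $\alpha=\alpha_x$ — follows exactly as in the proof of Corollary~\ref{c:subextclifford}: for $x\in X$ pick $\alpha\in\Gal(\crp{E}/\crp{L})$ with $\theta^{x\alpha}=\theta$ (possible since $\theta$ is $\crp{L}$-semi-invariant in $\oG$), then $(\phi^{x\alpha})^N=\theta^{x\alpha}=\theta=\phi^N$, and since $\phi$ and $\phi^{x\alpha}$ both lie over $\tau^{x\alpha}$-type data over the common field $\crp{E}$, a Clifford-correspondence uniqueness argument (Lemma~\ref{l:cliffcorfields} applied over $\crp{E}$, or a direct count) forces $\phi^{x\alpha}=\phi$.

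The main obstacle I anticipate is the $p$-local bookkeeping in the first two steps: tracking the multiplicity of the $\crp{L}$-Galois \emph{orbit} of $\theta$ (not $\theta$ itself) through induction from $\crp{L}X$-modules, and making sure the "$\bmod p$" nonvanishing genuinely propagates to a single irreducible complex constituent $\phi$ of $\tau_{N\cap X}$ with $\ipch{\theta}{\phi^N}\not\equiv 0$, rather than merely to the orbit-sum. One clean way around this is to first extend scalars: work with $\crp{L}X$-modules, decompose $\tau_{N\cap X}$ over $\crp{L}$ into sums of Galois orbits of complex irreducibles, and observe that the orbit of $\phi$ over $\crp{L}$ maps under induction to (a multiple of) the orbit of $\theta$ over $\crp{L}$, with the orbit sizes being equal once $\crp{L}(\phi)=\crp{L}(\theta)$ is known — so the prime-to-$p$ multiplicity for the orbit gives prime-to-$p$ multiplicity for $\phi$ individually. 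Getting the order of these deductions right (one wants $\crp{L}(\phi)=\crp{L}(\theta)$ \emph{and} the multiplicity statement, and each is most easily derived assuming a weak form of the other) is the delicate point, and I would resolve it by first proving $\oG=XN$ and the orbit-level multiplicity, then (c), then upgrading to the individual-character multiplicity in (b).
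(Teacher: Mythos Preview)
Your opening is right and matches the paper: apply Proposition~\ref{p:bermanwittp}, pair against the Galois-trace $\psi=\sum_{\sigma\in\Gal(\crp{L}(\theta)/\crp{L})}\theta^{\sigma}$, use Mackey, and read off a pair $(X,\tau)$ with $\abs{\oG:XN}\not\equiv 0\bmod p$, hence $\oG=XN$ since $\oG/N$ is a $p$\nbd group. (The paper pairs $\psi$ against $\psi\cdot b1_{\oG}$ directly rather than passing through a $\chi\in\Irr\oG$, which keeps everything on $N$ and avoids your vagueness about ``an irreducible $\crp{L}\oG$-module over $\theta$''.)

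The genuine gap is in your handling of~\ref{i:elemsubext_fields} and~\ref{i:elemsubext_galois}. Your argument for~(d) reads ``$(\phi^{x\alpha})^N=\theta^{x\alpha}=\theta=\phi^N$, and then Clifford-correspondence uniqueness forces $\phi^{x\alpha}=\phi$''. But we do \emph{not} have $\phi^N=\theta$; we only have $\ipch{\theta_Y}{\phi}\not\equiv 0\bmod p$ (where $Y=X\cap N$). So Lemma~\ref{l:cliffcorfields} and the argument of Corollary~\ref{c:subextclifford} simply do not apply here. Likewise, your sketch for~(c) --- ``any field between $\crp{L}$ and $\crp{E}$ has $p$\nbd power index, and $n$ prime to $p$ forces $\crp{L}(\phi)=\crp{E}$'' --- does not connect $n=\ipch{\theta_Y}{\phi}$ to $\abs{\crp{L}(\phi):\crp{L}}$ in any way, and a priori $\crp{L}(\phi)$ need not even lie inside $\crp{L}(\theta)$.

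The paper's device, which your plan is missing, is to let the $p$\nbd group
\[
   P=(X/Y)\times\widehat{\Gamma},
   \qquad \widehat{\Gamma}=\Gal(\crp{L}(\zeta)/\crp{L}),
\]
act on $\Irr Y$ and on $\Irr N$, and to prove the stabilizer equality $P_{\theta}=P_{\phi}$. The hypothesis $\abs{\crp{L}(\zeta):\crp{L}}$ a $p$\nbd power is exactly what makes $P$ a $p$\nbd group. One chooses $\phi$ so that \emph{both} $\ipch{\theta_Y}{S(\phi)}$ and $\ipch{\psi_Y\tau_Y}{\phi}$ are prime to $p$, where $S(\phi)$ is the $P$\nbd orbit sum of $\phi$. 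Then $P_{\theta}\subseteq P_{\phi}$ because $\abs{P_{\theta}:P_{\theta}\cap P_{\phi}}$ divides $\ipch{\theta_Y}{S(\phi)}$ (using that $P_{\theta}\nteq P$ since $\theta$ is semi-invariant), and $P_{\phi}\subseteq P_{\theta}$ because $\abs{P_{\phi}:P_{\theta}}$ divides $\ipch{\psi_Y\tau_Y}{\phi}$. From $P_{\theta}\cap\widehat{\Gamma}=P_{\phi}\cap\widehat{\Gamma}$ one reads off $\crp{L}(\theta)=\crp{L}(\phi)$, and from the full equality $P_{\theta}=P_{\phi}$ one reads off~(d). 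This two-sided divisibility argument is the heart of the proof and is absent from your proposal.
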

\begin{proof}
  Write
  \[ b1_{\oG} = \sum_{X,\tau} a_{X,\tau}\tau^{\oG},\]
  as in Proposition~\ref{p:bermanwittp}.
  Set $\Gamma= \Gal(\crp{L}(\theta)/\crp{L})$
  and 
  \[\psi = \Tr^{\crp{L}(\theta)}_{\crp{L}}(\theta)
      = \sum_{\sigma\in \Gamma} \theta^{\sigma}.
      \]
  Then
  \begin{align*}
    b\abs{\Gamma}
     &= \ipch{\psi}{\psi b1_{\oG}}_N\\
     &= \sum_{X,\tau} a_{X,\tau} 
        \ipch{\psi \cconj{\psi}}{(\tau^{\oG})_N}
        \\
     &= \sum_{X,\tau} a_{X,\tau}
        \ipch{\psi\cconj{\psi}}{\sum_{t\in [\oG:XN]}
                          \left(\tau^t_{X^t\cap N} \right)^N} 
     &&\quad\text{(Mackey)}                     
     \\
     &= \sum_{X,\tau} a_{X,\tau} \abs{\oG:XN}
                \ipch{\psi\cconj{\psi}}{(\tau_{X\cap N})^N}
     &&\quad\text{($\psi\cconj{\psi}$ is invariant in $\oG$)}
     \\
     &= \sum_{X,\tau} a_{X,\tau} \abs{\oG:XN}
             \ipch{\psi}{\psi(\tau_{X\cap N})^N}
     \\
     &= \abs{\Gamma} \sum_{X,\tau} a_{X,\tau} \abs{\oG:XN}
              \ipch{\theta}{\psi (\tau_{X\cap N})^N},            
  \end{align*}
  since $\psi (\tau_{X\cap N})^N$ is a character
  with values in $\crp{L}$.
  Since $b\not\equiv 0\mod p$, it follows that there is
  an $\crp{L}$\nbd $p$-elementary subgroup $X$ and
  a character $\tau$ of an $\crp{L}X$\nbd module, such that
  \[ a_{X,\tau}\abs{\oG:XN} \ipch{\theta}{\psi(\tau_{X\cap N})^N}
     \not\equiv 0\mod p.
  \]
  Fix such an $X$ and $\tau$ and set $Y=X\cap N$.
  First we note that since $\oG/N$ is a $p$\nbd group
  and $\abs{\oG:XN}\not\equiv 0\mod p$, 
  it follows that $\oG=XN$.
  
  Let $\widehat{\Gamma}=\Gal(\crp{L}(\zeta)/\crp{L})$
  and set $P= (X/Y)\times \widehat{\Gamma}$.
  This group acts  
  on the characters of $N$ and of $Y$. 
  The character $\tau_Y$ is invariant under the action of $P$,
  since it is the restriction from a character of $X$
  which has values in $\crp{L}$.
  Since $\theta$ is semi-invariant in $\oG$,
  it follows that every $P$-conjugate of $\theta$
  is of the form $\theta^{\sigma}$ with 
  $\sigma\in \Gamma=\Gal(\crp{L}(\theta)/\crp{L})$
  and thus $\psi$ is $P$\nbd invariant, as is $\psi_Y$, of course.
  Thus $\psi_Y\tau_Y$ is invariant under $P$.
  
  For each $\phi\in \Irr Y$, let
  $S(\phi)$ be the sum of the characters in the
  $P$\nbd orbit of $\phi$.
  Since $\psi_Y\tau_Y$ is invariant, we may write
  \[ \psi_Y\tau_Y
        = \sum_{\phi}c_{\phi} S(\phi),
        \qquad (c_{\phi} = \ipch{\psi_Y\tau_Y}{\phi}_Y)
  \]  
  where the sum runs over a set of representatives
  of the $P$\nbd orbits of $\Irr Y$.
  It follows from 
  \[ 0\not\equiv \ipch{\theta}{\psi\tau_Y^N}
      = \ipch{\theta_Y}{\psi_Y\tau_Y}
      = \sum_{\phi} c_{\phi}\ipch{\theta_Y}{S(\phi)}
  \]
  that there is $\phi\in \Irr Y$ such that
  $c_{\phi}\ipch{\theta_Y}{S(\phi)}\not\equiv 0 \mod p$.
  After replacing $\phi$ by another character in its
  orbit (if necessary), we may assume that 
  $\ipch{\theta_Y}{\phi}\not\equiv 0 \mod p$.
  For the rest of the proof, we
  fix a $\phi\in \Irr Y$ such that
  \[ \ipch{\psi_Y\tau_Y}{\phi}
     \ipch{\theta_Y}{S(\phi)} 
     \ipch{\theta_Y}{\phi}
     \not\equiv 0\mod p,
  \]
  and we show that this $\phi$ has the desired properties.  
  Of course we already have that 
  $\ipch{\theta}{\phi^N} = \ipch{\theta_Y}{\phi} 
   \not\equiv 0 \mod p$.
  It remains to show~\ref{i:elemsubext_fields} 
  and~\ref{i:elemsubext_galois}.
  
  The group $P=(X/Y)\times \widehat{\Gamma}$
  (with $\widehat{\Gamma} = \Gal(\crp{L}(\zeta)/\crp{L})$ as above)
  is a $p$\nbd group by our assumption.
  Write $P_{\phi}$ and $P_{\theta}$ for the stabilizers
  of $\phi$ and $\theta$ in $P$.
  We claim that $P_{\phi}=P_{\theta}$.
  First we show $P_{\theta}\subseteq P_{\phi}$.
  Since $\theta$ is semi-invariant in $X$,
  we have $P_{\theta}\widehat{\Gamma}=P$.
  Thus
  $P_{\theta}$ is normal in $P$ 
  and $P/P_{\theta}\iso X/X_{\theta}\iso\Gamma$ is abelian.
  Thus every $P_{\theta}$\nbd orbit contained in the $P$\nbd orbit
  of $\phi$ has the same length 
  $\abs{P_{\theta}:P_{\theta}\cap P_{\phi}}$.
  Since $\ipch{\theta_Y}{\phi^{xy}}= \ipch{\theta_Y}{\phi^x}$
  for $y\in P_{\theta}$, it follows that 
  $\abs{P_{\theta}:P_{\theta}\cap P_{\phi}}$ divides
  \[ 
    \ipch{\theta_Y}{S(\phi)} 
    = \sum_{x\in [P:P_{\phi}] } \ipch{\theta_Y}{\phi^x}
    \not\equiv 0\mod p
  .\]
  Thus $\abs{P_{\theta}:P_{\theta}\cap P_{\phi}}=1$ and 
  $P_{\theta}\subseteq P_{\phi}$.
  
  As we mentioned before, $\psi$ is the $P$\nbd orbit sum of $\theta$
  and $\tau_Y$ is $P$\nbd invariant.
  It follows that
  \[ \ipch{\psi_Y\tau_Y}{\phi}
         = \sum_{x\in [P:P_{\theta}]}\ipch{(\theta^x)_Y\tau_Y}{\phi}
         \not\equiv 0\mod p
  \]
  is divisible by the $p$\nbd power $\abs{P_{\phi}:P_{\theta}}$.
  Therefore, $P_{\phi} = P_{\theta}$ as claimed.
  
  From 
  $P_{\theta}\cap \widehat{\Gamma} 
   = P_{\phi}\cap\widehat{\Gamma}$
  it follows that a field automorphism over $\crp{L}$ fixes 
  $\phi$ if and only if it fixes $\theta$.
  Thus $\crp{L}(\theta)= \crp{L}(\phi)$. 
  Since $\theta$ is semi-invariant in $\oG$,
  there is, for every $x\in X$,
  an $\alpha_x\in \Gamma$ such that 
  $\theta^{x\alpha_x}=\theta$.
  If $\beta\in \widehat{\Gamma}$ is any extension
  of $\alpha_x$ to $\crp{L}(\zeta)$, then
  $(xY, \beta)\in P_{\theta}=P_{\phi}$.
  It follows that also
  $\phi^{x\alpha_x}=\phi$.
  The proof is finished. 
\end{proof}
\begin{cor}\label{c:elemsubext}
  In the situation of Theorem~\ref{t:elemsubext},
  let $\kappa\colon \oG \to G$ be an epimorphism
  with kernel $N$.
  Then 
  $\brcls{\theta}{\kappa}{\crp{L}}
   = \brcls{\phi}{\kappa_{|X}}{\crp{L}}$.   
\end{cor}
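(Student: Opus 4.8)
The plan is to deduce the corollary from Theorem~\ref{t:comparesubext}, applied over the field $\crp{L}$ with $X$ as a supplement of $N$ in $\oG$ and $Y:=X\cap N$ as $\oH\cap K$; the equality $\oG=XN$ from Theorem~\ref{t:elemsubext} gives $X/Y\iso\oG/N\iso G$, and the required commutative square is the obvious one. First I would verify Hypothesis~\ref{h:bconf2} for the data $(\crp{L},N,X,Y,\theta,\phi)$: part~(a) holds because $n:=\ipch{\theta_Y}{\phi}=\ipch{\theta}{\phi^N}$ is $\not\equiv 0\bmod p$, hence positive; part~(b) is assertion~\ref{i:elemsubext_fields}; and part~(c) is assertion~\ref{i:elemsubext_galois}. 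Theorem~\ref{t:comparesubext} then yields
\[
  \brcls{\theta}{\kappa}{\crp{L}}\cdot[S]
   = \brcls{\phi}{\kappa_{|X}}{\crp{L}}
   \qquad\text{in }\BrCliff(G,\crp{E}),
\]
where $\crp{E}=\crp{L}(\theta)=\crp{L}(\phi)$ and $S=(i\,\crp{L}N\,i)^{Y}$ is a central simple $G$\nbd algebra over $\crp{E}$ with $\dim_{\crp{E}}S=n^{2}$. So everything comes down to proving that $[S]=0$ in $\BrCliff(G,\crp{E})$.

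I would first check that $[S]=0$ already in the ordinary Brauer group $\Br(\crp{E})$. By Lemma~\ref{l:StensorFLfiFKi} we have $\C_{i\crp{L}Ni}(S)=\crp{L}Y i$, so the double centralizer theorem gives $i\crp{L}Ni\iso S\tensor_{\crp{E}}\crp{L}Y i$, hence $[S]=[i\crp{L}Ni]\,[\crp{L}Yi]^{-1}$ in $\Br(\crp{E})$. Now $i\crp{L}Ni$ is Morita equivalent to the simple component $\crp{L}Ne$ of $\crp{L}N$, and $\crp{L}Yi\iso\crp{L}Yf$ is a simple component of $\crp{L}Y$, so the orders of $[i\crp{L}Ni]$ and $[\crp{L}Yi]$ in $\Br(\crp{E})$ divide the Schur indices $m_{\crp{L}}(\theta)$ and $m_{\crp{L}}(\phi)$, respectively. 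Since a primitive $\exp(\oG)$\nbd th root of unity $\zeta$ generates a splitting field $\crp{L}(\zeta)$ for both $\crp{L}N$ and $\crp{L}Y$ over $\crp{L}$ (Brauer), both Schur indices divide $\abs{\crp{L}(\zeta):\crp{L}}$, which is a $p$\nbd power by hypothesis; thus $[S]$ has $p$\nbd power order in $\Br(\crp{E})$. On the other hand $\dim_{\crp{E}}S=n^{2}$ forces the index, hence the order, of $[S]$ in $\Br(\crp{E})$ to divide $n$, which is prime to $p$. Therefore $[S]=0$ in $\Br(\crp{E})$; equivalently, $S\iso\mat_{n}(\crp{E})$ as an $\crp{E}$\nbd algebra.

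The remaining step --- upgrading this to $[S]=0$ in $\BrCliff(G,\crp{E})$ --- is the one I expect to be the main obstacle, because a split central simple $G$\nbd algebra over $\crp{E}$ can still be nontrivial in $\BrCliff(G,\crp{E})$. Writing $S=\enmo_{\crp{E}}(U)$ with $\dim_{\crp{E}}U=n$, the $G$\nbd action on $S$ lifts to a projective semilinear action of $G$ on $U$, with a cocycle $c\in H^{2}(G,\crp{E}^{*})$ for the action of $G$ on $\crp{E}^{*}$ through $G\to\Aut\crp{E}$, and $[S]=0$ in $\BrCliff(G,\crp{E})$ precisely when $c$ is cohomologically trivial. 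Here I would use the hypothesis on $\crp{L}$ again: because $\abs{\crp{L}(\zeta):\crp{L}}$ is a $p$\nbd power and $\crp{E}\supseteq\crp{L}$, the Schur indices and Galois orbit lengths of representations of the $p$\nbd group $G$ over $\crp{E}$ are all $p$\nbd powers, so every simple module of the twisted skew group ring $\crp{E}^{c}[G]$ has $\crp{E}$\nbd dimension a power of $p$. Since $U$ has $\crp{E}$\nbd dimension $n$ prime to $p$, it must have a $1$\nbd dimensional summand; a $1$\nbd dimensional $c$\nbd projective semilinear $G$\nbd module exhibits $c$ as a coboundary, so $c$ is trivial and $[S]=0$. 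The delicate bookkeeping is to make this cohomological dictionary precise inside Turull's framework --- in particular to treat correctly the case where $G$ does not act faithfully on $\crp{E}$, where one separates the contribution of $\Ker(G\to\Aut\crp{E})$ (a $p$\nbd group, so its part of the obstruction is killed by a $p$\nbd power) from that of the induced faithful quotient.
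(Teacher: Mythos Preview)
Your reduction to showing $[S]=1$ in $\BrCliff(G,\crp{E})$ is exactly the paper's, and your argument that $S\iso\mat_n(\crp{E})$ as an $\crp{E}$-algebra is correct (the paper argues more directly that $S\tensor_{\crp{E}}\crp{L}(\zeta)$ is a matrix ring and that $\abs{\crp{L}(\zeta):\crp{E}}$ is a $p$-power while $n$ is prime to $p$, but your Brauer-group computation via $[i\crp{L}Ni]\,[\crp{L}Yi]^{-1}$ works equally well).

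The divergence is in the final step. You propose to find a $G$-stable line inside $U$ by arguing that every simple module of the crossed product $\crp{E}^c[G]$ has $p$-power $\crp{E}$-dimension. Your justification, however, invokes Schur indices and Galois orbits of \emph{linear} representations of the $p$-group $G$ over $\crp{E}$, and this does not directly control simple modules of a \emph{twisted semilinear} algebra; making the claim precise (especially separating the kernel of $G\to\Aut\crp{E}$ from the faithful quotient, as you yourself note) is genuine extra work that you have not carried out. The paper avoids this entirely with a determinant trick. Writing the given $G$-action on $S\iso\mat_n(\crp{E})$ as $s\mapsto s^{\eps(g)\sigma(g)}$, where $\eps$ is the natural action through $G\to\Aut\crp{E}$ and $\sigma(g)\in S^*$, one obtains $\sigma(x)^{\eps(y)}\sigma(y)=\alpha(x,y)\sigma(xy)$ for some $\alpha\in Z^2(G,\crp{E}^*)$. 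Taking determinants shows immediately that $\alpha^n$ is a coboundary; since $\alpha^{\abs{G}}$ is also a coboundary and $\gcd(n,\abs{G})=1$, so is $\alpha$. After correcting $\sigma$ by this coboundary one checks that $V=\crp{E}^n$ with $v\circ g:=v^g\sigma(g)$ is an $\crp{E}G$-module realising $S\iso\enmo_{\crp{E}}V$ as $G$-algebras. Morally, the determinant is the induced action on the top exterior power $\Lambda^nU$, which \emph{is} a one-dimensional object of the sort you were looking for --- but for the cocycle $c^n$ rather than $c$, whence the need to combine with $c^{\abs{G}}$ trivial; this is both simpler and more robust than hunting for a line inside $U$ itself.
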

\begin{proof}
  It follows from Theorem~\ref{t:elemsubext} that
  Hypothesis~\ref{h:bconf2} holds for
  \[
    \begin{tikzcd}
      1 \rar & Y \rar \dar[hook] & X \rar{\kappa_{|X}} \dar[hook]
                                 & G \rar \dar[equals] & 1 \\
      1 \rar & N \rar & \oG \rar{\kappa} & G \rar & 1
    \end{tikzcd}
  \]
  over the field $\crp{L}$.  
  Thus Theorem~\ref{t:comparesubext} applies and yields that
  \[
    \brcls{\phi}{\kappa_{|X}}{\crp{L}}
     = [S]\cdot \brcls{\theta}{\kappa}{\crp{L}},
  \]
  where $[S]$ is the equivalence class of the algebra 
  $S= (i\crp{L}N i)^Y$, with
  \[ i = \sum_{\alpha\in \Gal(\crp{L}(\theta)/\crp{L})}
          (e_{\theta}e_{\phi})^{\alpha}.
  \]
  It remains to show that $[S]=1$, in other words,
  $S$ is a trivial $G$\nbd algebra over $\crp{L}(\theta)$.
  
  Pick $G$-algebras $A\in \brcls{\theta}{\kappa}{\crp{L}}$
  and $B\in \brcls{\phi}{\kappa_{|X}}{\crp{L}}$
  with $A\tensor_{\crp{L}(\phi)}S \iso B$, as in the proof of
  Theorem~\ref{t:comparesubext}.
  By Lemma~\ref{l:StensorFLfiFKi}, we know that
  $S$ is central simple of dimension $n^2$ over $\crp{L}(\theta)$,
  where $n=\ipch{\theta_Y}{\phi}\not\equiv 0\mod p$.
  Since $\crp{L}(\zeta)$ is a splitting field of all groups involved,
  it follows that 
  $A\tensor_{\crp{L}}\crp{L}(\zeta)$ is a direct product 
  of matrix rings over $\crp{L}(\zeta)$,
  and $A\tensor_{\crp{L}(\theta)}\crp{L}(\zeta)$ is a matrix ring
  over $\crp{L}(\zeta)$.
  The same holds for $B$, and thus 
  $S\tensor_{\crp{L}(\theta)}\crp{L}(\zeta)\iso\mat_n(\crp{L}(\zeta))$.
  Since $\abs{\crp{L}(\zeta):\crp{L}(\theta)}$ is a power of $p$ and
  $n$ is prime to $p$, it follows that 
  $S\iso \mat_n(\crp{L}(\theta))$.
  
  The action of $G$ on $\crp{L}(\theta)$ extends naturally 
  to an action
  of $G$ on $\mat_n(\crp{L}(\theta))$.
  Via some fixed isomorphism $S\iso \mat_n(\crp{L}(\theta))$,
  we get an action of $G$ on $S$.
  We write $\eps\colon G\to \Aut S$ for this action,
  that is we write $s^{\eps(g)}$ to denote the effect of this action.
  This action is different from the $G$-algebra action on
  $S=(i\crp{L}Ni)^Y$ we already have, 
  but both actions agree on $\Z(S)\iso \crp{L}(\theta)$.
  
  It follows that for $g\in G$, the map
  $S\ni s \mapsto s^{\eps(g^{-1})g}$
  is an $\crp{L}(\theta)$\nbd algebra automorphism of $S$.
  Thus there is $\sigma(g)\in S^*$ such that
  $s^{\eps(g^{-1})g}= \sigma(g)^{-1}s \sigma(g)$ for all $s\in S$,
  or $s^{g}= s^{\eps(g)\sigma(g)}$.  
  By comparing left and right hand side of
  \[ s^{\eps(xy)\sigma(xy) }
      = s^{xy} 
      = s^{\eps(x)\sigma(x)\eps(y)\sigma(y)}
      = s^{\eps(xy)\sigma(x)^{\eps(y)}\sigma(y)},
  \]
  we see that
  \[ \sigma(x)^{\eps(y)}\sigma(y) = \alpha(x,y)\sigma(xy)
       \text{ for some } \alpha(x,y)\in \crp{L}(\theta)^{*}.\]
  Thus 
  $\sigma\colon G\to S\iso\mat_n(\crp{L}(\theta))$
  is a twisted projective representation and
  $\alpha \in Z^2(G, \crp{L}(\theta)^{*})$.
  
  Taking determinants in 
  $\sigma(x)^{\eps(y)}\sigma(y) = \alpha(x,y)\sigma(xy)$
  yields that
  $\alpha^n$ is a coboundary.
  We know also that $\alpha^{\abs{G}}$ is a coboundary.
  Since $\abs{G}$ is a power of $p$ and $n$ is prime to $p$,
  it follows that $\alpha$ itself is a coboundary.
  So after multiplying $\sigma$ with a coboundary, 
  we may assume that 
  $\sigma(x)^{\eps(y)}\sigma(y)=\sigma(xy)$ for all
  $x$, $y\in G$.
  
  Now we can show that $S$ is a trivial $G$\nbd algebra.
  Let $V=\crp{L}(\theta)^n$ and view $V$ as a right $S$\nbd module
  via our fixed isomorphism $S\iso\mat_n(\crp{L}(\theta))$.
  We have to define a right $G$\nbd module structure on $V$
  such that $S\iso\enmo_{\crp{L}(\theta)}(V)$
  as $G$\nbd algebras over $\crp{L}(\theta)$.
  
  The action of $G$ on $\crp{L}(\theta)$ defines an action of
  $G$ on $V$ which we denote by $v\mapsto v^g$.
  It has the property
  \[ (vs)^g = v^g s^{\eps(g)}
     \quad \text{for $s\in S$}.
  \]
  Define $v\circ g := v^g \sigma(g)$.
  Then 
  \begin{align*}
    (v\circ x)\circ y
    &= (v^x \sigma(x))^y \sigma(y)
     = v^{xy} \sigma(x)^{\eps(y)} \sigma(y)
     = v^{xy} \sigma(xy)
     = v\circ (xy)
  \end{align*}
  for $v\in V$ and $x$, $y\in G$, and
  $(v\lambda)\circ g = (v\circ g)\lambda^g$ for
  $v\in V$, $\lambda\in \crp{L}(\theta)$ and $g\in G$.
  Thus $V$ is a right module over the crossed product 
  $\crp{L}(\theta)G$.
  The computation
  \begin{align*} 
   \left( (v\circ g^{-1}) s \right) \circ g
        = \left( v^{g^{-1}} \sigma(g^{-1}) s \right)^g \sigma(g) 
        &= v \sigma(g^{-1})^{\eps(g)} s^{\eps(g)} \sigma(g)
        \\
        &= v \sigma(g)^{-1} s^{\eps(g)} \sigma(g)
        \\
        &= v s^g
  \end{align*}
  shows that
  $S\iso \enmo_{\crp{L}(\theta)}(V)$
  as $G$\nbd algebras.
  This finishes the proof that $[S]=1$ in
  $\BrCliff(G,\crp{L}(\theta))$.
\end{proof}
\section{Proof of the theorem for elementary groups}
Let $\crp{F}$ be a field of characteristic $0$.
We need the following easy lemma which states that
semi-invariance is in some sense transitive:
\begin{lemma}\label{l:semi-inv-trans}
  Let $M$, $N$ be normal subgroups of $\oG$ with
  $M\leq N$.
  Suppose that $\tau\in \Irr M$
  is $\crp{F}$\nbd semi-invariant in $N$ and that
  $\theta\in \Irr(N\mid \tau)$ is $\crp{F}$\nbd semi-invariant
  in $\oG$.
  Then $\tau$ is $\crp{F}$\nbd semi-invariant in $\oG$.
\end{lemma}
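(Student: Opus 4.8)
The plan is to verify the defining property of semi-invariance directly: given an arbitrary $g\in\oG$, produce $\gamma\in\Gal(\crp{F}(\tau)/\crp{F})$ with $\tau^{g\gamma}=\tau$. Fix $g$. Since $\theta$ is $\crp{F}$\nbd semi-invariant in $\oG$, pick $\alpha\in\Gal(\crp{F}(\theta)/\crp{F})$ with $\theta^{g\alpha}=\theta$, and extend $\alpha$ to an automorphism $\tilde\alpha$ of a finite Galois extension $\crp{M}$ of $\crp{F}$ containing both $\crp{F}(\theta)$ and $\crp{F}(\tau)$ (such an $\crp{M}$ exists because, in characteristic $0$, $\crp{F}(\theta,\tau)/\crp{F}$ is finite and separable; this enlargement is genuinely needed, since in general $\crp{F}(\tau)\not\subseteq\crp{F}(\theta)$). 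Using that conjugation by elements of $\oG$ commutes with field automorphisms and with restriction to the normal subgroup $M$, and that $\theta^g$ has the same field of values as $\theta$, one gets $\theta^{g\tilde\alpha}=\theta^{g\alpha}=\theta$. Hence, $\tau$ being an irreducible constituent of $\theta_M$ (as $\theta\in\Irr(N\mid\tau)$), its conjugate $\tau^{g\tilde\alpha}$ is an irreducible constituent of $(\theta_M)^{g\tilde\alpha}=(\theta^{g\tilde\alpha})_M=\theta_M$.

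Next, apply Clifford's theorem to $\theta\in\Irr N$ and the normal subgroup $M\nteq N$: the two irreducible constituents $\tau$ and $\tau^{g\tilde\alpha}$ of $\theta_M$ are $N$\nbd conjugate, so $\tau^{g\tilde\alpha n}=\tau$ for some $n\in N$, equivalently $\tau^{g}=(\tau^{n^{-1}})^{\tilde\alpha^{-1}}$. This is where the hypothesis that $\tau$ is $\crp{F}$\nbd semi-invariant in $N$ is used: it lets us absorb the conjugating element, as $\tau^{n^{-1}}=\tau^{\mu}$ for some $\mu\in\Gal(\crp{F}(\tau)/\crp{F})$. Thus $\tau^{g}=(\tau^{\mu})^{\tilde\alpha^{-1}}$. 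Since both $\tau^{g}$ and $\tau^{\mu}$ have field of values exactly $\crp{F}(\tau)$, the composite embedding $\mu\tilde\alpha^{-1}$ maps $\crp{F}(\tau)$ onto itself, so it restricts to some $\gamma^{-1}\in\Gal(\crp{F}(\tau)/\crp{F})$ with $\tau^{g}=\tau^{\gamma^{-1}}$, that is $\tau^{g\gamma}=\tau$, as required.

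The steps requiring care are all bookkeeping: that a Galois conjugate of an irreducible character is again irreducible and so an honest constituent of $\theta_M$; that conjugation by $g$, restriction to the normal subgroups $M$ and $N$, and the field automorphisms commute pairwise on characters of normal subgroups; and that conjugation preserves fields of values (so that certain a priori embeddings of $\crp{F}(\tau)$ are in fact automorphisms). There is no serious obstacle here — the lemma is elementary — but the one substantive point, which I would highlight, is the combination of Clifford's theorem (controlling the constituents of $\theta_M$ up to $N$\nbd conjugacy) with the $N$\nbd semi-invariance of $\tau$ (converting "$N$\nbd conjugate" into "Galois conjugate over $\crp{F}$"); together these let the conjugating element $n\in N$ be traded for a field automorphism.
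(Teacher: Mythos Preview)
Your proof is correct and follows essentially the same route as the paper: extend the automorphism $\alpha$ witnessing semi-invariance of $\theta$, use Clifford's theorem to see that $\tau^{g\tilde\alpha}$ and $\tau$ are $N$-conjugate, and then invoke the $\crp{F}$-semi-invariance of $\tau$ in $N$ to convert the $N$-conjugacy into a Galois conjugacy. The only cosmetic difference is that the paper writes $\tau^{g\alpha}=\tau^{n}$ and then $\tau^{g\alpha\beta}=\tau$ directly, whereas you rearrange to $\tau^{g}=(\tau^{\mu})^{\tilde\alpha^{-1}}$ and argue a bit more explicitly that the resulting field map restricts to an automorphism of $\crp{F}(\tau)$.
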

\begin{proof}
  Let $g\in \oG$.
  There is $\alpha\in \Gal(\crp{F}(\theta)/\crp{F})$
  such that $\theta^{g\alpha}=\theta$.
  We may extend $\alpha$ to $\crp{F}(\theta,\tau)$.
  For simplicity, we denote this element of
  $\Gal(\crp{F}(\theta,\tau)/\crp{F})$ by $\alpha$, too.
  Then $\tau^{g\alpha}$ is a constituent of
  $\theta^{g\alpha}_M=\theta_M$, and so there is
  $n\in N$ such that 
  $\tau^{g\alpha}=\tau^n$.
  Since $\tau $ is semi-invariant in $N$, there
  is $\beta\in \Gal(\crp{F}(\tau)/\crp{F})$
  such that $\tau^{n\beta}=\tau$.
  Thus $\tau^{g\alpha\beta}=\tau$.
  Since $g\in \oG$ was arbitrary,
  $\tau$ is semi-invariant in $\oG$ over $\crp{F}$.
\end{proof}
Let $\oG$ be an $\crp{F}$\nbd elementary group
with respect to some prime $p$.
Then $\oG=PC$ where $P$ is a Sylow $p$\nbd subgroup
and $C$ is a normal cyclic subgroup of $p'$\nbd order.
In the next result, we will only need that
$\oG$ has a normal abelian subgroup $C$ such that
$\oG/C$ is nilpotent.
In this case, every irreducible character of
$\oG$ is induced from a linear character
of some subgroup containing the 
normal subgroup $C$~\cite[Theorem~6.22]{isaCTdov}.
(This is even true if $\oG/C$ is supersolvable,
but we will also need the nilpotence of $\oG/C$.)
\begin{thm}\label{t:reduclin}
  Let
  \[\begin{tikzcd}
                  1 \rar & N \rar & \oG \rar{\kappa} & G \rar & 1
     \end{tikzcd}  
  \]
  be an exact sequence of groups.
  Assume that there is 
  a normal abelian subgroup $C\leq N$ such that
  $\oG/C$ is nilpotent.
  Suppose that 
  $\theta\in \Irr N$ is 
  $\crp{F}$\nbd semi-invariant in $\oG$.
  Then there are subgroups
  $ A\leq M\leq H$ of $\oG$ and a linear
  character $\lambda\in \Lin A$ such that 
  \begin{enums}[series=subextprops]
  \item \label{i:reduclingroups}
        $A\nteq H$, $\oG=HN$ and $C\leq A\leq M=H\cap N$,
  \item \label{i:reduclinind}
         $\theta= \lambda^N$,
  \item \label{i:fieldequ}
        $\crp{F}(\theta)=\crp{F}(\phi)$,
         where $\phi=\lambda^M$,
  \item \label{i:brcliffequ}
        $\brcls{\theta}{\kappa}{\crp{F}}
         = \brcls{\phi}{\kappa_{|H}}{\crp{F}}$.
  \end{enums}  
\end{thm}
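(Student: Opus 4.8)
The plan is to argue by induction on $|N|$. If $N=C$, then $\theta\in\Lin N$, and one may take $H=\oG$, $A=M=C$ and $\lambda=\theta$: then $\oG=HN$, $C\leq A\leq M=H\cap N=C$, $A=C\nteq H$, $\theta=\lambda^N=\lambda$, and $\phi=\lambda^M=\theta$, so all four assertions hold trivially. So assume $N>C$, and fix an irreducible constituent $\tau$ of $\theta_C$; since $C$ is abelian, $\tau\in\Lin C$ and $C$ fixes $\tau$.

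\emph{First case: $\tau$ is not $\crp{F}$\nbd semi-invariant in $\oG$.} Here I would invoke Corollary~\ref{c:subextclifford} with $C$ in the role of the normal subgroup $A$ there and $\tau$ in the role of $\tau$. Setting $\oH=\{g\in\oG\mid\exists\,\alpha\in\Gal(\crp{F}(\tau)/\crp{F})\colon\tau^{g\alpha}=\tau\}$ and $L=\oH\cap N$, the assumption gives $\oH\subsetneq\oG$, hence $|L|<|N|$, and the corollary provides $\oG=\oH N$, a unique $\phi_0\in\Irr(L\mid\tau)$ with $\theta=\phi_0^N$, together with $\crp{F}(\theta)=\crp{F}(\phi_0)$ and $\brcls{\theta}{\kappa}{\crp{F}}=\brcls{\phi_0}{\kappa_{|\oH}}{\crp{F}}$. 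One then checks that the theorem applies to $(\phi_0,\kappa_{|\oH})$ with the exact sequence $1\to L\to\oH\to G\to 1$ and the same subgroup $C$: indeed $C\leq\oH$ (as $C$ fixes $\tau$), so $C\leq L$; moreover $C\nteq\oH$, the quotient $\oH/C$ is nilpotent, and $\phi_0$ is $\crp{F}$\nbd semi-invariant in $\oH$ — the last point follows from $\theta=\phi_0^N$ being semi-invariant in $\oG$, from $L\nteq\oH$, and from the injectivity statement in Lemma~\ref{l:cliffcorfields} applied inside $N$ to $C$ and $\tau$ (whose associated subgroup of $N$ in the sense of that lemma is precisely $L$). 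By the inductive hypothesis there are $A\leq M_0\leq H\leq\oH$ and $\lambda\in\Lin A$ with the four conclusions for $(\phi_0,\kappa_{|\oH})$; putting $M:=M_0$ one has $M=H\cap L=H\cap N$ and $\oG=\oH N=HN$, while $\theta=(\lambda^L)^N=\lambda^N$, $\crp{F}(\theta)=\crp{F}(\phi_0)=\crp{F}(\lambda^M)$, and $\brcls{\theta}{\kappa}{\crp{F}}=\brcls{\phi_0}{\kappa_{|\oH}}{\crp{F}}=\brcls{\lambda^M}{\kappa_{|H}}{\crp{F}}$, so $A$, $M$, $H$, $\lambda$ are as required by \ref{i:reduclingroups}--\ref{i:brcliffequ}.

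\emph{Second case: every constituent of $\theta_C$ is $\crp{F}$\nbd semi-invariant in $\oG$.} This is exactly the behaviour forced when $\oG$ is $\crp{F}$\nbd $p$\nbd elementary, and Corollary~\ref{c:subextclifford} applied to $C$ gives no reduction; this is the heart of the theorem. Using that $\oG/C$ is nilpotent I would first pick a chief factor $B/C$ of $\oG$ of prime order with $C<B\leq N$ (so $[\oG,B]\leq C$): if some constituent of $\theta_B$ fails to be $\crp{F}$\nbd semi-invariant in $\oG$, the first case applies with $B$ replacing $C$; otherwise one has descended to the genuinely ``elementary'' situation. There I would compare $(\theta,\kappa)$ with a Clifford pair whose bottom character is (closer to) linear by means of the full strength of Theorem~\ref{t:comparesubext}, getting $\brcls{\theta}{\kappa}{\crp{F}}\cdot[S]=\brcls{\phi}{\kappa_{|\oH}}{\crp{F}}$ for a proper supplement $\oH$ of $N$ in $\oG$ (so that $|\oH\cap N|<|N|$, allowing the induction to be resumed on $(\phi,\kappa_{|\oH})$), where $S=(i\crp{F}Ni)^{\oH\cap N}$ is the associated defect algebra. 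The main obstacle, as I see it, is to show that $[S]$ is the trivial element of $\BrCliff(G,\crp{E})$: this is a Brauer--Witt type vanishing, which I would establish much as in the proof of Corollary~\ref{c:elemsubext} — extend scalars to a splitting field $\crp{F}(\zeta)$, where $S$ becomes a matrix algebra $\mat_n(\crp{F}(\zeta))$ with $n$ prime to the relevant prime; describe the $G$\nbd action on $S$ by a twisted projective representation with cocycle $\alpha\in Z^2(G,\crp{E}^*)$, whose class is killed both by $n$ and by $|G|$ and hence is a coboundary; and then, after normalising, build an $\crp{E}G$\nbd module $V$ with $S\iso\enmo_{\crp{E}}(V)$ as $G$\nbd algebras, so that $[S]=1$. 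Resuming the induction then completes the proof.
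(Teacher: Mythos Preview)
Your first case is correct and is exactly the mechanism the paper uses at each inductive step (via Corollary~\ref{c:subextclifford}, always with multiplicity one so that no defect algebra $S$ appears).

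The genuine gap is in your second case. You propose, when every constituent of $\theta_C$ is $\crp{F}$\nbd semi-invariant in $\oG$, to eventually land in a situation where you apply Theorem~\ref{t:comparesubext} with a nontrivial multiplicity and then prove $[S]=1$ ``much as in the proof of Corollary~\ref{c:elemsubext}''. But that argument for $[S]=1$ rests on three arithmetic hypotheses that are simply absent here: that $G$ is a $p$\nbd group, that $n=\ipch{\theta_Y}{\phi}$ is prime to $p$, and that the splitting field has $p$\nbd power degree over the ground field. Theorem~\ref{t:reduclin} only assumes $\oG/C$ nilpotent and $\theta$ semi-invariant; there is no prime $p$ in sight and no field hypothesis, so the cocycle $\alpha$ you obtain need not be a coboundary. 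Moreover, you never actually produce the ``proper supplement $\oH$'' in this case; it is asserted rather than constructed.

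The paper avoids this entirely. Instead of climbing up from $C$, it descends from $N$: take $T\nteq\oG$ \emph{minimal} with $C\leq T\leq N$, $\theta=\tau^N$ for some $\tau\in\Irr T$, and $\tau$ semi-invariant in $N$ (hence in $\oG$ by Lemma~\ref{l:semi-inv-trans}). If $\tau$ is linear one is done with $H=\oG$. If not, $\tau$ is induced from a proper subgroup containing $C$; choosing a maximal subgroup $S<T$ through which it factors, $\tau$ vanishes off $S$, and \emph{semi-invariance in $\oG$} forces $\tau$ to vanish off every $\oG$\nbd conjugate $S^g$, hence off $U=\bigcap_g S^g\nteq\oG$. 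Nilpotence of $\oG/C$ then gives $T_0\nteq\oG$ with $U\leq T_0<T$ and $\abs{T:T_0}$ prime, so $\tau=\tau_0^T$. Now Corollary~\ref{c:subextclifford} applied with $T_0$ (still multiplicity one!) passes to $(\theta_0,\kappa_{|\oG_0})$; minimality of $T$ forces $N_0<N$, and induction closes. The point is that the paper never leaves the multiplicity-one regime, so the defect algebra is always trivial for free; your sketch attempts to prove $[S]=1$ in a setting where there is no reason it should be.
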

\begin{center}
  \begin{tikzpicture}[on grid=true, inner sep=2pt]
                  \node (G) {$\oG$};
                  \node (N) [below left=1.8 of G,
                             label=190:{$\lambda^N=\theta$}] 
                            {$N$};
                  \node (H) [below right=1.3 of G] 
                            {$H$};
                  \node (M) [below right=1.3 of N,
                             label={-25:{$\phi=\lambda^M$}} ] 
                            {$M$};  
                  \node (A) [below left=1.1 of M,
                             label={-35:$\lambda$}] 
                            {$A$};
                  \node (C) [below left= 1 of A] 
                            {$C$};           
                  \draw (C)--(A)--(M)--(H)--(G)--(N)--(M); 
  \end{tikzpicture}
\end{center}
Our proof of this result follows closely the proof
of Proposition~3.6 in Yamada's book~\cite{Yamada74}.
(Proposition~3.6 in Yamada's book is the case $N=\oG$ (that is, $G=1$)
of Theorem~\ref{t:reduclin},\ref{i:reduclingroups}--\ref{i:fieldequ}.)
\begin{proof}[Proof of Theorem~\ref{t:reduclin}]
  Consider the set of all subgroups
  $T\subseteq N$ such that
  \begin{enumerate}[label=(\arabic*)]
  \item \label{i:auxsubgrnormal}
        $C\leq T\nteq \oG$,
  \item \label{i:auxsubgrind}
        there is $\tau\in \Irr T$ such that
        $\theta=\tau^N$,
  \item \label{i:auxsubgrsemiinv}
        for all $n\in N$, there is
        $\alpha\in \Gal(\crp{F}(\tau)/\crp{F})$
        such that $\tau^{n\alpha}=\tau$.
  \end{enumerate}  
  Let $T$ be minimal among the subgroups
  having Properties~\ref{i:auxsubgrnormal}--\ref{i:auxsubgrsemiinv}.
  (Note that $T=N$ is such a subgroup.)
  If the corresponding 
  $\tau$ is linear, we set
  $A=T$, $\lambda=\tau$, $H=\oG$ and
  $M=N$. Then $\phi=\lambda^M=\theta$
  and~\ref{i:reduclingroups}--\ref{i:brcliffequ} 
  of the proposition are trivially true.
  So in the case where $\tau(1)=1$, the proof is finished.
  
  Assume $\tau(1)>1$.
  Our first goal is to find a proper subgroup $T_0 < T$
  which is normal in $\oG$ and such that
  $\tau$ is induced from a character $\tau_0\in \Irr T_0$.
  The non-linear character
  $\tau$ is induced from a linear character
  of some subgroup $B$ such that $C\leq B < T$.
  Let $S$ be 
  a maximal subgroup of $T$  containing $B$, and thus $C$. 
  Then $\tau$ is induced from some character of $S$.
  Since $S\nt T$ (because $T/C$ is nilpotent), we see that
  $\tau_{T\setminus S}=0$.
  Set 
  \[ U = \bigcap_{g\in \oG}S^g.
  \]
  Then $C\leq U\nteq \oG$.
  We claim that $\tau$ vanishes on $T\setminus U$.
  Since $\tau$ vanishes outside $S$,
  it follows that $\tau^g$ vanishes outside $S^g$ for $g\in \oG$.
  But $\tau^g$ and $\tau$ are Galois conjugate
  (by Lemma~\ref{l:semi-inv-trans}), and thus
  $\tau $ vanishes outside $S^g$.
  Since $g\in \oG$ was arbitrary, it follows
  that $\tau $ vanishes outside $U$ as claimed.
  
  Since $ U$, $T\nteq \oG$ and $\oG/U$ is nilpotent, 
  there exists 
  $T_0\nteq \oG$ such that
  $U\leq T_0 < T$ and
  $\abs{T:T_0}$ is a prime $p$.
  Since $\tau$ vanishes outside $U$, it vanishes
  outside $T_0$.
  Since $\abs{T:T_0}=p$, we see that
  $\tau=\tau_0^T$ for some
  $\tau_0\in \Irr T_0$.
  We have found a $T_0$ as wanted.
  
  In the following, set
  \[ \oG_0 = \{g\in \oG \mid 
                   \exists \alpha\in \Gal(\crp{F}(\tau_0)/\crp{F})
                   \colon \tau_0^{g\alpha}=\tau_0
             \},
  \]
  $N_0=\oG_0\cap N$ and $\theta_0 = \tau_0^{N_0}$.
  Then 
  $\theta_0^N = \tau_0^N = (\tau_0^T)^N = \tau^N=\theta$
  and $\theta_0\in \Irr (N_0\mid \tau_0)$ is the unique 
  element with $\theta_0^N=\theta$ (Lemma~\ref{l:cliffcorfields}).
  Corollary~\ref{c:subextclifford}, applied to 
  the extension
  \[
    \begin{tikzcd}
      1 \rar & N \rar & \oG \rar{\kappa} & G \rar & 1
    \end{tikzcd}
  \] 
  and the normal subgroup $T_0\subseteq N$,
  yields that the following hold:
  \begin{enums}
  \item $\oG= \oG_0 N$,
  \item $\theta=\theta_0^N$,
  \item $\crp{F}(\theta)= \crp{F}(\theta_0)$,
  \item $\brcls{\theta}{\kappa}{\crp{F}}
         = \brcls{\theta_0}{\kappa_{|\oG_0}}{\crp{F}}$.
  \end{enums}
  (We can also show that $\oG=\oG_0 T$ by applying
   Corollary~\ref{c:subextclifford}
   to the extension 
   $ T \to \oG \to \oG/T$,
   but actually we will not need this for the proof.)
  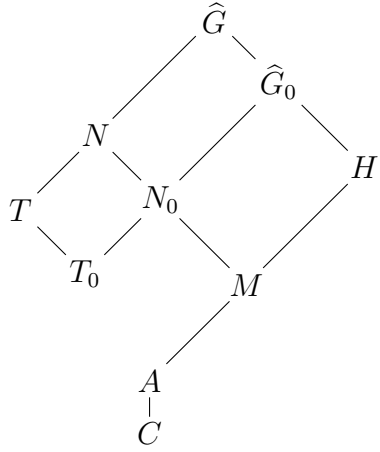
\begin{figure}[ht]
    \begin{tikzpicture}[on grid=true, inner sep=2pt]
         \node (G) {$\oG$};
         \node (G0) [below right=1.2 of G] {$\oG_0$};
         \node (H) [below right=1.6 of G0] {$H$};
         \node (N) [below left=2.2 of G, ] {$N$};
         \node (N0) [below left=2.2 of G0,] {$N_0$};
         \node (M) [below left=2.2 of H, ]{$M$};  
         \node (T) [below left=1.4 of N, ] {$T$};
         \node (T0) [below left=1.4 of N0, ] {$T_0$};
         \node (A) [below left=1.8 of M, ] {$A$};
         \node (C) [below= 0.7 of A] {$C$};
         \draw (C)--(A)--(M)--(H)--(G0)--(G)--(N)--(N0)--(M);
         \draw (N)--(T)--(T0)--(N0)--(G0);
    \end{tikzpicture}
    \caption{Proof of Theorem~\ref{t:reduclin}}
    \label{fig:reduclinproof}
  \end{figure}

  Note that~\ref{i:auxsubgrnormal} and~\ref{i:auxsubgrind}
  hold for $T_0$ and $\tau_0$.     
  It follows from the minimality of $T$ that~\ref{i:auxsubgrsemiinv}
  does not hold for $T_0$, which means that 
  $N_0 < N$.   
  Thus we may apply induction (on $\abs{N/C}$)
  to conclude that there are subgroups
  $A\leq M\leq H\leq \oG_0$ and 
  $\lambda\in \Lin A$ 
  such that Properties~\ref{i:reduclingroups}--\ref{i:brcliffequ}
  hold with $\oG$, $N$, $\theta$
  replaced by $\oG_0$, $N_0$, $\theta_0$.
  It follows that
  $\oG= \oG_0 N= HN$ and $M=H\cap N_0 = H\cap \oG_0\cap N=H\cap N$
  (see Figure~\ref{fig:reduclinproof}),
  that $\theta=\theta_0^N= \lambda^N$,
  and that $\crp{F}(\theta)=\crp{F}(\theta_0)=\crp{F}(\phi)$.
  Finally, we have that
  \[\brcls{\theta}{\kappa}{\crp{F}}
    = \brcls{\theta_0}{\kappa_{|\oG_0}}{\crp{F}}
    = \brcls{\phi}{\kappa_{|H}}{\crp{F}}.
    \qedhere
  \]
\end{proof}
\begin{cor}\label{c:reduccyc}
  In the situation of Theorem~\ref{t:reduclin},
  $\brcls{\theta}{\kappa}{\crp{F}}
   \in \SC_{\cyccl}(G,\crp{F}(\theta))$.
\end{cor}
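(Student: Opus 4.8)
The plan is to combine Theorem~\ref{t:reduclin} with Proposition~\ref{p:cycltoabel}. Theorem~\ref{t:reduclin} hands us subgroups $A\leq M\leq H\leq\oG$ and a linear character $\lambda\in\Lin A$ such that $A\nteq H$, $\oG=HN$, $C\leq A\leq M=H\cap N$, $\theta=\lambda^N$, and, writing $\phi=\lambda^M$, such that $\crp{F}(\phi)=\crp{F}(\theta)$ and
\[
  \brcls{\theta}{\kappa}{\crp{F}}=\brcls{\phi}{\kappa_{|H}}{\crp{F}}.
\]
So it is enough to show that $\brcls{\phi}{\kappa_{|H}}{\crp{F}}$ lies in $\SC_{\cyccl}(G,\crp{F}(\theta))$, and the natural way to do this is to recognise $(\phi,\kappa_{|H})$ as a Clifford pair belonging to the class $\macl$ of Definition~\ref{defi:subclasses}.

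First I would check that $(\phi,\kappa_{|H})$ is indeed a Clifford pair over $G$: since $\oG=HN$ and $N=\Ker\kappa$, the restriction $\kappa_{|H}\colon H\to G$ is surjective with kernel $H\cap N=M$. Conditions~\ref{i:cond_abel} and~\ref{i:cond_induc} of Definition~\ref{defi:subclasses} then hold for this pair with the same $A$ and $\lambda$, because $A\nteq H$ with $A\subseteq M$ and $\phi=\lambda^M$ with $\lambda$ linear; hence $(\phi,\kappa_{|H})\in\macl$. Moreover this pair induces on $\crp{F}(\phi)=\crp{F}(\theta)$ the same $G$\nbd action as $(\theta,\kappa)$: this is exactly condition~(c) of Hypothesis~\ref{h:bconf2}, which is verified in the course of proving Theorem~\ref{t:reduclin}, and it is in any case forced by the displayed identity being an equality in $\BrCliff(G,\crp{F}(\theta))$. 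Consequently $\brcls{\phi}{\kappa_{|H}}{\crp{F}}\in\SC_{\macl}(G,\crp{F}(\theta))$. Since $\theta$ is $\crp{F}$\nbd semi-invariant in $\oG$, the group $G$ acts on $\crp{F}(\theta)$ fixing $\crp{F}$, so Proposition~\ref{p:cycltoabel} applies and yields $\SC_{\macl}(G,\crp{F}(\theta))=\SC_{\cyccl}(G,\crp{F}(\theta))$. Combining everything, $\brcls{\theta}{\kappa}{\crp{F}}=\brcls{\phi}{\kappa_{|H}}{\crp{F}}\in\SC_{\cyccl}(G,\crp{F}(\theta))$, as desired.

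I do not expect a genuine obstacle here: the real content sits in Theorem~\ref{t:reduclin} and in Proposition~\ref{p:cycltoabel}, and what is left is bookkeeping — matching the reduced triple $(H,M,\phi)$ against the defining conditions of $\macl$, and keeping the base field $\crp{F}$ and the $G$\nbd field $\crp{F}(\theta)$ straight. The only point that warrants an explicit sentence is why $\brcls{\phi}{\kappa_{|H}}{\crp{F}}$ lives in $\BrCliff(G,\crp{F}(\theta))$ at all, i.e.\ that $(\phi,\kappa_{|H})$ and $(\theta,\kappa)$ induce the same $G$\nbd action on $\crp{F}(\theta)$, which as noted is built into the proof of Theorem~\ref{t:reduclin}.
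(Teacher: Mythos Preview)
Your proposal is correct and follows essentially the same route as the paper: use Theorem~\ref{t:reduclin} to replace $\brcls{\theta}{\kappa}{\crp{F}}$ by $\brcls{\phi}{\kappa_{|H}}{\crp{F}}$, observe that $(\phi,\kappa_{|H})\in\macl$, and then invoke Proposition~\ref{p:cycltoabel}. The paper's proof is terser (it simply calls the verification that $(\phi,\kappa_{|H})\in\macl$ ``obvious''), whereas you spell out the bookkeeping about $\kappa_{|H}$ being onto with kernel $M$ and the $G$\nbd actions matching; but the logical content is the same.
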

\begin{proof}
  By Proposition~\ref{p:cycltoabel}, it suffices to show that 
  $\brcls{\theta}{\kappa}{\crp{F}}
     \in \SC_{\macl}(G,\crp{F}(\theta))$.
  By Theorem~\ref{t:reduclin},
  we have 
  $\brcls{\theta}{\kappa}{\crp{F}}
           = \brcls{\phi}{\kappa_{|H}}{\crp{F}}$,
  and it is obvious that 
  the conditions in Theorem~\ref{t:reduclin}
  yield that
  $\brcls{\phi}{\kappa_{|H}}{\crp{F}}$
  is in $\SC_{\macl}(G,\crp{F}(\theta))$.
  (See Definition~\ref{defi:subclasses}.)           
\end{proof}
\begin{remark}
In fact, the construction from the proof
of Theorem~\ref{t:reduclin} itself yields
an $M$ that also has the following properties:
\begin{enums}[resume=subextprops]
  \item \label{i:galstab}
        $M=\{n\in N \mid 
        \exists \alpha\in \Gal(\crp{F}(\lambda)/\crp{F})
         \colon \lambda^{n\alpha}=\lambda\}$,
  \item \label{i:galiso}
        $M/A\iso \Gal(\crp{F}(\lambda)/\crp{F}(\theta))$.
\end{enums}
\end{remark}
\begin{proof} 
  By Lemma~\ref{l:galisoredundant},
  $\ref{i:galstab}$ together with $\lambda^M\in \Irr M$
  implies \ref{i:galiso}.
  
  To see~\ref{i:galstab}, 
  let $N_0$ be as in the proof above and write 
     $\widehat{M}= \{n\in N \mid 
             \exists \alpha\in \Gal(\crp{F}(\lambda)/\crp{F}(\theta))
              \colon \lambda^{n\alpha}=\lambda\}$.
     By induction we may assume that $\widehat{M}\cap N_0=M$.
     Since $\lambda^N=\theta\in \Irr N$
     and $\lambda^{N_0}=\theta_0\in \Irr N_0$,
     we know from \ref{i:galiso}
     respective Lemma~\ref{l:galisoredundant} that
     $\widehat{M}/A\iso \Gal(\crp{F}(\lambda)/\crp{F}(\theta))$
     and $M/A\iso \Gal(\crp{F}(\lambda)/\crp{F}(\theta_0))$.
     But since $\crp{F}(\theta)=\crp{F}(\theta_0)$,
     it follows that $M=\widehat{M}$.
     The proof is finished.   
\end{proof}

\section{Proof of the main theorem}
For the convenience of the reader, we summarize here how
Theorem~\ref{main} from the introduction follows from
the various results proved so far.
We restate Theorem~\ref{main}, using the notation introduced
in Section~\ref{sec:cycltoabel}.
\begin{thm}
  Let $\crp{E}$ be a field on which the finite group $G$ acts.
  Then $\SC(G,\crp{E})=\SC_{\cyccl}(G,\crp{E})$.
\end{thm}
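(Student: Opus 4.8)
The plan is to thread together the reductions established in the preceding sections, peeling an arbitrary element of $\SC(G,\crp{E})$ down step by step until it is of the form handled by Corollary~\ref{c:reduccyc}. The inclusion $\SC_{\cyccl}(G,\crp{E})\subseteq\SC(G,\crp{E})$ is immediate from the definitions, so fix $a\in\SC(G,\crp{E})$; the goal is $a\in\SC_{\cyccl}(G,\crp{E})$. First I would reduce to the case that $a$ has $p$-power order and $G$ is a $p$-group. Since $\BrCliff(G,\crp{E})$ is torsion and both $\SC(G,\crp{E})$ and $\SC_{\cyccl}(G,\crp{E})$ are subgroups by Corollary~\ref{c:sccgroup}, Lemma~\ref{l:reducppart} lets me treat each $p$-part of $a$ separately; and then, choosing a Sylow $p$-subgroup $P\leq G$, Lemma~\ref{l:reducpgroup} reduces to showing $\Res_P^G(a)\in\SC_{\cyccl}(P,\crp{E})$. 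Because the restriction homomorphism sends $\brcls{\theta}{\kappa}{\crp{F}}$ to the class of the Clifford pair restricted over $P$, this is again an element of the relevant $\SC$, so I may assume $G$ is a $p$-group and $a=\brcls{\theta}{\kappa\colon\oG\to G}{\crp{F}}$ with $\crp{F}(\theta)=\crp{E}$ and $a$ of $p$-power order.

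Next I would pass to the field $\crp{L}$ supplied by Corollary~\ref{c:reducbigfield}, namely the fixed field of a Sylow $p$-subgroup of $\Gal(\crp{F}(\eps)/\crp{F})$ for $\eps$ a primitive $\abs{\oG}$-th root of unity, so that $\abs{\crp{L}:\crp{F}}$ is prime to $p$ while $\abs{\crp{F}(\eps):\crp{L}}$ is a $p$-power. By that corollary it suffices to prove $\brcls{\theta}{\kappa}{\crp{L}}\in\SC_{\cyccl}(G,\crp{L}(\theta))$, and the point of the enlargement is precisely that now $\abs{\crp{L}(\zeta):\crp{L}}$ is a $p$-power for a primitive $\exp(\oG)$-th root of unity $\zeta$ --- exactly the field hypothesis demanded by Theorem~\ref{t:elemsubext}. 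Applying Theorem~\ref{t:elemsubext} together with Corollary~\ref{c:elemsubext} over $\crp{L}$ to the sequence $1\to N\to\oG\to G\to 1$ (legitimate since $\oG/N\iso G$ is a $p$-group) produces an $\crp{L}$-$p$-elementary subgroup $X\leq\oG$ and a character $\phi\in\Irr(N\cap X)$ with $\oG=XN$, $\crp{L}(\theta)=\crp{L}(\phi)$, and $\brcls{\theta}{\kappa}{\crp{L}}=\brcls{\phi}{\kappa_{|X}}{\crp{L}}$ in $\BrCliff(G,\crp{L}(\theta))$, where $\kappa_{|X}\colon X\to G$ is an epimorphism with kernel $N\cap X$.

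To finish, write $X=PC$ with $C$ a normal cyclic $p'$-subgroup and $P$ a $p$-group. Since $X/(N\cap X)\iso G$ is a $p$-group and $\abs{C}$ is prime to $p$, the image of $C$ in $G$ is trivial, so $C\leq N\cap X$; thus $C$ is a normal abelian subgroup of $X$ contained in $N\cap X$ with $X/C\iso P$ nilpotent. Hence Theorem~\ref{t:reduclin}, and with it Corollary~\ref{c:reduccyc}, applies with $(\oG,N,\theta,\crp{F})$ replaced by $(X,N\cap X,\phi,\crp{L})$ and yields $\brcls{\phi}{\kappa_{|X}}{\crp{L}}\in\SC_{\cyccl}(G,\crp{L}(\phi))=\SC_{\cyccl}(G,\crp{L}(\theta))$. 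Therefore $\brcls{\theta}{\kappa}{\crp{L}}\in\SC_{\cyccl}(G,\crp{L}(\theta))$, and Corollary~\ref{c:reducbigfield} propagates this back to $a\in\SC_{\cyccl}(G,\crp{E})$, completing the argument.

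Since every ingredient has been prepared in the earlier sections, the assembly itself is essentially \emph{bookkeeping}; the genuine obstacles live in the results being invoked. Chief among them: that $\SC$ and $\SC_{\cyccl}$ really are subgroups (without this one cannot pass to $p$-parts and Sylow restrictions and stay inside $\SC_{\cyccl}$); the Galois-descent isomorphism $\BrCliff(G,\crp{E})\iso\BrCliff(G\times\Delta,\crp{K})$ with the norm computation underpinning Corollary~\ref{c:reducbigfield}; and, above all, the vanishing $[S]=1$ in Corollary~\ref{c:elemsubext}, which combines the Brauer--Berman--Witt induction theorem with a twisted-projective-representation argument exploiting that $\abs{G}$ is a $p$-power while the multiplicity $n=\ipch{\theta_Y}{\phi}$ is prime to $p$. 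Within the assembly the one delicate choice is that of $\crp{L}$: it must be the fixed field of a Sylow $p$-subgroup of $\Gal(\crp{F}(\eps)/\crp{F})$, precisely so that the $p$-power field hypothesis of Theorem~\ref{t:elemsubext} is available, and this is the step where a careless attempt would break down.
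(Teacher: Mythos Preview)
Your assembly is correct and follows essentially the same route as the paper: reduce to $p$-power order and $p$-group $G$ via Lemmas~\ref{l:reducppart} and~\ref{l:reducpgroup}, enlarge the field via Corollary~\ref{c:reducbigfield}, pass to an $\crp{L}$-$p$-elementary subgroup via Corollary~\ref{c:elemsubext}, and finish with Corollary~\ref{c:reduccyc}. You even spell out explicitly why $C\leq N\cap X$ and $X/C$ is nilpotent, which the paper leaves implicit in the phrase ``we may assume that $\oG$ is $\crp{L}$-elementary''.
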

\begin{proof}
  Let $a=\brcls{\theta}{\kappa}{\crp{F}}\in \SC(G,\crp{E})$.
  We have to show that $a\in \SC_{\cyccl}(G,\crp{E})$.
  By Lemmas~\ref{l:reducppart} and~\ref{l:reducpgroup},
  we may assume that 
  $a$ has $p$\nbd power order and that $G$ is a $p$\nbd group.
  By Corollary~\ref{c:reducbigfield}, it suffices to show that
  $\brcls{\theta}{\kappa}{\crp{L}}\in \SC_{\cyccl}(G, \crp{L}(\theta))$,
  where $\crp{L}$ is such that a splitting field of $\oG$ 
  and all its subgroups has $p$\nbd power degree over $\crp{L}$.
  Then Corollary~\ref{c:elemsubext} yields that
  we may assume that $\oG$ is $\crp{L}$\nbd elementary for the prime $p$.
  In this case, Corollary~\ref{c:reduccyc} yields that
  $\brcls{\theta}{\kappa}{\crp{L}}\in \SC_{\cyccl}(G,\crp{L}(\theta))$.  
\end{proof}
%
\printbibliography   
%
\end{document}